\numberwithin{equation}{section}
\def\p{\partial}
\def\S{\Sigma}
\def\s{\mathbb{S}}
\def\I{\mathscr{I}}
\def\O{\Omega}
\def\i{\text{Int}~}
\def\n{\overset{\rightarrow}{\mathbf{n}}}
\newtheorem{theorem}{Theorem}[section]
\newtheorem{lemma}[theorem]{Lemma}
\newtheorem{corollary}[theorem]{Corollary}
\newtheorem{proposition}[theorem]{Proposition}
\theoremstyle{definition}
\newtheorem{definition}[theorem]{Definition}
\newtheorem{remark}[theorem]{Remark}
\newtheorem*{rem}{Remark}
\newcommand{\Extend}[5]{\ext@arrow0099{\arrowfill@#1#2#3}{#4}{#5}}
\begin{document}
\title[Uniformly positive scalar curvature]{Topology of complete 3-manifolds with uniformly positive scalar curvature}

\author[Jian Wang]{Jian Wang}
\address{Current: State Key Laboratory of Mathematical Sciences, 
Academy of Mathematics and Systems Science, Chinese Academy of Sciences,
Beijing 100190, China}
\address{
Stony brook University, Department of Mathematics,100 Nicolls Road, NY-11794 Stony Brook, USA}
\email{jian.wang.4@amss.ac.cn, jian.wang.4@stonybrook.edu}

\maketitle

\begin{abstract} In this article, we  classify (non-compact) $3$-manifolds with uniformly positive scalar curvature. Precisely, we show that an orientable 3-manifold has a complete metric with  uniformly positive scalar curvature if and only if it is homeomorphic to a connected sum of spherical $3$-manifolds and some copies of $\mathbb{S}^2\times \mathbb{S}^1$. Further, we study a $3$-manifold with mean convex boundary and with uniformly positive scalar curvature. If the boundary is a disjoint union of closed surfaces, then the manifold is a  connected sum of spherical $3$-manifolds, some copies of $\mathbb{S}^1\times \mathbb{S}^2$ and some handlebodies. 
\end{abstract}
%
\section{Introduction} 

In a Riemannian manifold $(M, g)$,  the scalar curvature is defined as the sum of all sectional curvatures. The scalar curvature of $g$ is called \emph{uniformly positive}, if it is greater than a positive constant. 

When considering the topology of $3$-manifolds, particularly non-compact ones, it becomes essential to investigate those admitting (uniformly) positive scalar curvature. A well-known and fundamental question was posed by Yau  in his problem section  (See Problem 27 in \cite{Yau}):

\vspace{2mm}

\emph{How to classify $3$-manifolds admitting complete Riemannian metrics of (uniformly) positive scalar curvature up to diffeomorphisms?}

\vspace{2mm}

Extensive research has  been devoted to this question by various authors,  including Schoen-Yau's works \cite{SY3, SY1,SY},  Gromov-Lawson's \cite{GL1, GL} and others. In the case of closed $3$-manifolds, Perelman made significant contribution through  the so-called Ricci flow with surgery \cite{P1, P2, P3}. Notably, he proved that a closed and oriented $3$-manifold with positive scalar curvature is a connected sum of quotients of $\s^3$ (called spherical $3$-manifolds) and  some copies of $\s^1\times \s^2$. Bessi\`eres, Besson and Maillot extended his results to (non-compact) $3$-manifolds with uniformly positive scalar curvature and with bounded geometry (see \cite{BBM}).

\vspace{2mm}

However, the classification of  open $3$-manifolds with (uniformly) positive scalar curvature has remained unknown. The topological structure of open $3$-manifolds is much more complicated and the generalization of  Kneser's theorem \cite{Kneser} (which concerns the prime decomposition) fails  to hold for them (see \cite{Scott} and \cite{Maillot2}).

\vspace{2mm}

To facilitate our study, we introduce the concept of \emph{infinite connected sum}. 
Let $\mathscr{F}$ represent a family of $3$-manifolds.  A $3$-manifold $M$ is said to be an \emph{infinite connected sum} of members in $\mathscr{F}$, if there exists a locally finite graph $G$ satisfying the following conditions:
\begin{itemize}[leftmargin=15pt]
\item[(1)] Each vertex $v$ of $G$ corresponds to a copy $M_v$ of some manifold in $\mathscr{F}$;  

\item[(2)] For each vertex $v$, define $Y_v$ as $M_v$ with $d_v$ punctures, where $d_v$ represents the degree of $v$; 

\item[(3)] $M$ is diffeomorphic to the manifold obtained by the operation described in Figure \ref{graph1}: for each edge $e$ in $G$, connecting vertices $v, v'$, choose two $2$-spheres $S\subset \p Y_v$, $S'\subset \p Y_{v'}$, and glue $Y_v$ and $Y_{v'}$ together along $S$ and $S'$ using an orientation-reversing diffeomorphism. (See Definition \ref{inf-sum})
\end{itemize}
\begin{figure}[H]
\centering{
\def\svgwidth{\columnwidth}
{\scalebox{0.8}{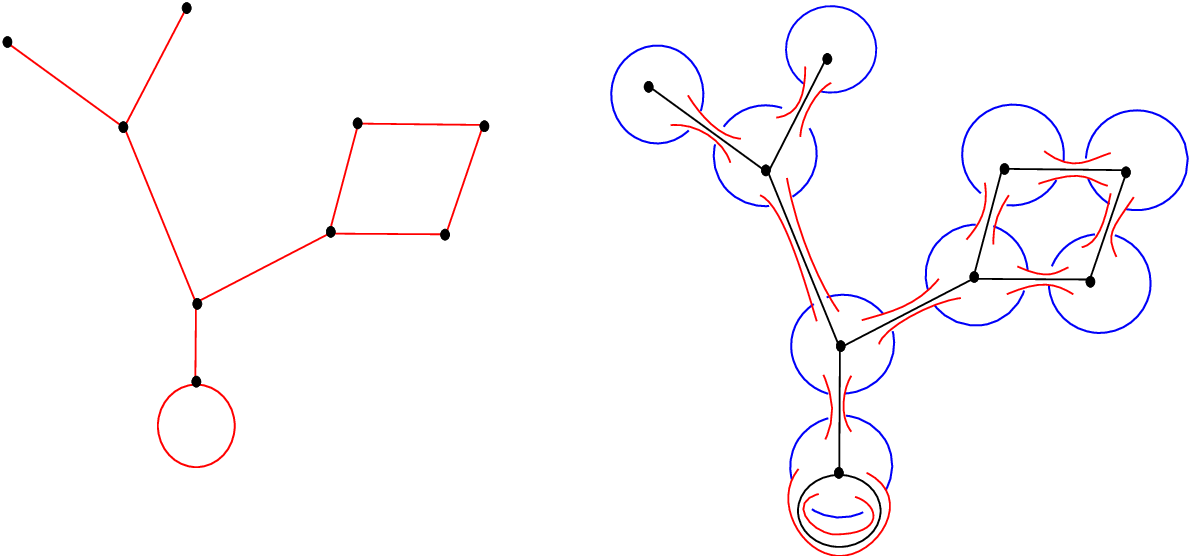}}
\caption{}
\label{graph1}
}
\end{figure}

\begin{rem}{\bf{(a)}}
For example,  $\mathbb{R}^1\times \s^2$ and  $\mathbb{R}^3$ are both the infinite connected sums of $3$-spheres (see Section 2.2).

\noindent {\bf{(b)}} When $G$ is a finite tree, it is  the usual definition of the connected sum. 

\noindent {\bf{(c)}} When $G$ is a finite graph, we could transform the graph into a tree at the expense of adding  some factors $\s^1\times \s^2$ (see Figure \ref{loops}).  Generally, for a fixed $3$-manifold $M$, the graph $G$ is not uniquely associated with $M$. It is worth mentioning that  $\s^1\times \s^2$ is a connected sum of $\s^3$ with itself.

\end{rem}

Building upon the works of \cite{GL1} and \cite{SY1}, it is well known  that  any infinite connected sum of spherical $3$-manifolds and some copies of $\mathbb{S}^1\times \mathbb{S}^2$ admits a complete metric with (uniformly) positive scalar curvature. In this paper,  we establish the converse:

\begin{theorem}\label{A} Any  complete and connected orientable $3$-manifold of uniformly positive scalar curvature is homeomorphic to an infinite connected sum of spherical $3$-manifolds and some copies of $\mathbb{S}^1\times \mathbb{S}^2$. \end{theorem}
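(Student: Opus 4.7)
The plan is to realise $M$ as a direct limit of finite connected-sum decompositions carried out on compact exhausting regions with mean convex boundary, and then to organise the resulting sphere systems into a locally finite graph. First, I would exhaust $M$ by an increasing sequence of compact codimension-$0$ submanifolds $K_1\subset K_2\subset\cdots$ with smooth boundary whose union is $M$. By perturbing each $\partial K_n$ slightly outward (via a signed distance function, or a short-time mean curvature flow, or by replacing $K_n$ with its minimizing hull), I would arrange $\partial K_n$ to be strictly mean convex while retaining the uniform lower bound $R\geq R_0>0$ on the scalar curvature.

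Next, to each $K_n$ the second theorem stated in the abstract, for compact $3$-manifolds with mean convex boundary and uniformly positive scalar curvature, applies. This yields a finite graph $G_n$ exhibiting $K_n$ as a connected sum of spherical $3$-manifolds, copies of $\mathbb{S}^2\times\mathbb{S}^1$, and finitely many handlebodies, the handlebody pieces being attached along $\partial K_n$. The central task is then to make the systems of separating $2$-spheres compatible as $n$ grows, so that $G_n$ embeds as a subgraph of $G_{n+1}$. Here I would exploit the uniform positivity of the scalar curvature: stable minimal $2$-spheres (or $\mu$-bubble replacements) have a priori bounded diameter and area of order $R_0^{-1/2}$ and $R_0^{-1}$ respectively, so any pairwise disjoint family of such separating spheres is locally finite. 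Combined with the Kneser-type finiteness built into each $G_n$, a diagonal/subsequence argument should produce a single locally finite, pairwise disjoint family $\{S_\alpha\}$ of embedded separating spheres in $M$ whose restriction to each $K_n$ realises the decomposition.

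Finally, cutting $M$ along $\{S_\alpha\}$ and capping each resulting boundary sphere with a $3$-ball gives a disjoint union of closed $3$-manifolds, namely the vertex manifolds of the infinite graph $G$ to be constructed. A handlebody piece of $K_n$ is absorbed into the annulus $K_{n+1}\setminus\mathrm{int}(K_n)$ at the next stage, where by the compact mean-convex theorem it is further decomposed; each of its $g$ handles contributes an extra $\mathbb{S}^2\times\mathbb{S}^1$ summand in the limit. The locally finite graph whose vertices are the closed pieces obtained and whose edges are the spheres $\{S_\alpha\}$ then exhibits $M$ as the desired infinite connected sum.

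I expect the hereditary compatibility of the sphere systems across the exhaustion, in the third paragraph above, to be the core technical obstacle. Without bounded geometry there is no a priori reason that a separating sphere chosen inside $K_n$ should persist as a reducing sphere in the decomposition of $K_{n+1}$: the decomposition of $K_{n+1}$ could split a handlebody of $K_n$ in a way incompatible with the earlier choice. The uniform scalar curvature bound, via the diameter estimate on stable minimal spheres and the controlled existence of $\mu$-bubbles separating $K_n$ from the complement of $K_{n+1}$, is what must force this compatibility. A secondary issue is the exclusion of aspherical summands from any compact piece, which is where the Gromov--Lawson and Schoen--Yau obstructions, ultimately underlying the closed classification, must be invoked at each finite stage.
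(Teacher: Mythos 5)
Your plan has several genuine gaps, the most serious being a circularity: the compact version of Theorem~\ref{B} that you propose to apply to each $K_n$ is, in this paper, itself proved via Corollary~\ref{mean-sphere}, which is a boundary version of Theorem~\ref{A}. Applying Theorem~\ref{B} to the pieces of an exhaustion in order to prove Theorem~\ref{A} therefore begs the question. If instead one wanted to invoke the independently known compact case (Carlotto--Li via Ricci flow), the argument might escape circularity, but then the next objections bite.

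The step of making each $\partial K_n$ mean convex while ``retaining the uniform lower bound $R\geq R_0>0$'' does not go through. Distance spheres are not mean convex in general; short-time mean curvature flow does not turn an arbitrary surface into a mean convex one; and minimizing hulls need not exist, may fail to be smooth, or may change the topology of $K_n$. The paper handles this by a conformal deformation near the boundary that \emph{does} destroy the scalar curvature bound in a collar, and then uses the diameter estimate (Proposition~\ref{dist}) to track which pieces of a minimal surface stay in the undisturbed region --- this is exactly where the ``mixed boundary'' notion (Definition~\ref{mixed}) comes in, and your proposal has no substitute for it. More fundamentally, the existence input the paper relies on for the primary separation step is \emph{not} minimal surfaces but $\mu$-bubbles (Lemma~\ref{sep}): prescribing a mean curvature function $h\to\pm\infty$ near $\partial_\pm$ gives a minimizer \emph{without any boundary convexity hypothesis}, and the stability inequality with the $h$-term forces the pieces to be spheres. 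This is precisely the device that makes the non-compact case tractable, and it is absent from your proposal.

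Finally, the compatibility of sphere systems across the exhaustion, which you correctly flag as the central obstacle, is not resolved by a ``diagonal/subsequence argument.'' The sphere systems coming from independent decompositions of $K_n$ and $K_{n+1}$ can be mutually incompatible, and the uniform diameter bound gives local finiteness of \emph{one} disjoint system but does not reconcile two different ones. The paper sidesteps this entirely: it applies the $\mu$-bubble separation lemma to the annular regions $Y_k=\rho^{-1}([8\pi k,8\pi(k+1)])$ directly, producing at once a single locally finite family of separating spheres and hence an honest infinite prime decomposition (Theorem~\ref{C}) with no limiting argument. Each prime factor is then shown to be spherical or $\mathbb{S}^1\times\mathbb{S}^2$ by a separate minimal-surface analysis of its punctured model inside $M$ (Theorem~\ref{mixed-existence}, Lemma~\ref{pre}, Theorem~\ref{classify}); merely invoking the Gromov--Lawson/Schoen--Yau obstructions ``at each finite stage,'' as you suggest, does not handle the manifold-with-boundary situation that actually arises. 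Your side remark that a genus-$g$ handlebody piece in $K_n$ ``contributes $g$ extra $\mathbb{S}^2\times\mathbb{S}^1$ summands'' is also not correct in general; how the handles close up depends on the topology outside $K_n$.
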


In 2012, Besson, Bessi\`eres and Maillot  \cite{BBM} provided the first proof of Theorem \ref{A} under the assumption of bounded geometry. Recently, Gromov (see  Section 3.10.2 \cite{Gromov19}, Pages 203-204) independently presented  a concise proof for Theorem \ref{A}. 

Our  proof combines the concepts of $\mu$-bubbles and minimal surfaces. They  allow us to find  a prime decomposition for open $3$-manifolds with uniformly positive scalar curvature, which we will precisely describe later.  Coincidentally,  Gromov \cite{Gromov19} used the so-called compact exhaustion corollary (see Section 3,7.2 \cite {Gromov19}, Page 171) to get such a   decomposition. 

The second part of this paper employs  minimal surfaces to analyze the topological structure of each factor in the prime decomposition. Such a result also appeared  in the work of Chen-Chu-Zhu \cite{chen-chu-zhu} and  Gromov's note \cite{Gromov19}. Our approach could be applied to non-compact $3$-manifolds with mean convex boundaries.

We establish the following theorem:

\begin{theorem}\label{B} Let $(M, g)$ be an orientable  and connected $3$-manifold with mean convex boundaries, and with uniformly positive scalar curvature. If its boundary is a disjoint union of closed surfaces, then $M$ is homeomorphic to an infinite connected sum of spherical $3$-manifolds, handlebodies and some copies of $\mathbb{S}^1\times \mathbb{S}^2$. 
\end{theorem}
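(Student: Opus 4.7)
The plan for Theorem \ref{B} is to run the two-part strategy of Theorem \ref{A} (a $\mu$-bubble prime decomposition followed by classification of each prime piece) on $(M,g)$ directly, using the mean-convex boundary as a barrier, and then to identify the one new kind of prime piece that appears: those carrying original boundary components of $M$. The goal is to show that every such prime piece is a handlebody.

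First, I would produce the prime decomposition by the same $\mu$-bubble argument used for Theorem \ref{A}. The key observation is that a mean-convex $\p M$ acts as a barrier for the $\mu$-bubble functional: by the maximum principle an area-minimizer cannot touch $\p M$, so every separating $2$-sphere produced by the construction lies in $\i M$. This expresses $M$ as an infinite connected sum $M = \#_v N_v$ along a locally finite graph in which each prime piece $N_v$ inherits uniformly positive scalar curvature and mean-convex boundary, the latter consisting of some of the decomposition spheres together with some of the original boundary components of $M$. A prime piece $N_v$ with no original boundary components can be capped off by a Gromov--Lawson PSC mean-convex $3$-ball to produce a closed, prime, PSC $3$-manifold, and Theorem \ref{A} identifies such a piece as either a spherical $3$-manifold or $\s^2\times\s^1$.

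For a prime piece $N$ whose boundary contains a closed surface $\S$ of genus $\geq 1$ inherited from $\p M$, I would first show that $\S$ is compressible in $N$. Assuming for contradiction that $\S$ is incompressible, I would run a Schoen--Yau/Meeks--Simon--Yau area minimization in the homotopy class of $\S$ pushed slightly into $\i N$, using the mean-convex $\p N$ as a two-sided barrier that traps the minimizing sequence in a compact region. This would produce a stable, two-sided, embedded minimal surface $\S'\subset\i N$ representing the same nontrivial incompressible class. The stability inequality, combined with the Gauss equation and uniform positivity of the scalar curvature, forces $\int_{\S'} K\, dA > 0$, so by Gauss--Bonnet $\S'$ is a $2$-sphere, contradicting incompressibility. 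Hence every non-spherical boundary component of $N$ is compressible. The loop theorem together with Dehn's lemma then supplies an embedded compressing disk for each such component; cutting $N$ along it strictly raises $\chi(\p N)$ by $2$ while preserving uniform PSC and mean convexity. Iterating finitely many times per component converts $\p N$ into a disjoint union of $2$-spheres, each of which bounds a $3$-ball by primeness of $N$. Reversing the compressions reconstructs $N$ from these balls by attaching $1$-handles dual to the compression disks---the defining construction of a handlebody. Combining this with the preceding paragraph identifies $M$ with an infinite connected sum of spherical $3$-manifolds, handlebodies, and copies of $\s^2\times\s^1$.

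The main obstacle I expect is the stable-minimal-surface step: arranging the existence theorem in the non-compact, mean-convex setting so that (a) the minimizer $\S'$ exists and genuinely represents the prescribed incompressible class of $\S$, (b) the minimizing sequence stays uniformly away from the other boundary components of $M$ and from the decomposition spheres, and (c) the limit is regular enough to apply the Schoen--Yau stability identity. A secondary technical point is running the iterated compression in a locally finite way when $\p M$ has infinitely many components, and checking that primeness of the ambient manifold is preserved under each intermediate compression in the open setting.
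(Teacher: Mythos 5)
Your plan reverses the paper's order of operations: you prime-decompose via $\mu$-bubbles first and then compress each prime piece's boundary, whereas the paper first compresses $\p M$ with stable minimal discs (the geometric loop lemma, Proposition \ref{loop} and Corollary \ref{cut-disc1}), then proves the remaining incompressible boundary components are spheres by doubling (Lemma \ref{doubling}, Theorem \ref{bdy-sphere}), then deforms the metric to make the sphere boundary minimal (Theorem \ref{metric-deformation}), and only then invokes the $\mu$-bubble prime decomposition (Corollary \ref{mean-sphere}). Both routes ultimately recognize handlebody factors by undoing compressions via boundary connected sums, and both exploit the Schoen--Yau curvature distance estimate to localize the compressing discs.

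However there is a genuine gap in your key step, the one where you minimize area in the homotopy class of an incompressible surface $\S \subset \p N$. You assert that mean convexity of $\p N$ gives a ``two-sided barrier that traps the minimizing sequence in a compact region,'' but this is not correct: a mean-convex boundary only prevents the sequence from crossing $\p N$; it does nothing to keep it from drifting to infinity in the non-compact interior of $N$. The paper sidesteps exactly this difficulty. Proposition \ref{loop} minimizes a \emph{disc}, whose boundary circle $\sigma$ is pinned on $\p M$; combined with the curvature diameter estimate of Proposition \ref{dist} (and the auxiliary mean-convex deformation on a distant collar $B(\p^* K,\epsilon)$), this forces the disc to stay within a bounded distance of $\sigma$. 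There is no analogous anchoring for a closed surface in its homotopy class, so your existence step (a) is unresolved, which is precisely the main obstacle you flag. The doubling argument the paper uses to conclude incompressible boundary is spherical avoids non-compact area minimization altogether and would substantially simplify your argument.

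A secondary issue: you conclude that after iterated compression, the boundary spheres bound balls ``by primeness of $N$.'' But those spheres are boundary spheres of the \emph{cut} manifold, not spheres in $N$, and irreducibility is not automatically preserved when you cut a manifold with boundary along essential discs. The paper avoids this by doing compression \emph{before} the prime decomposition: first reduce to pieces $M_s$ with incompressible boundary, establish that this boundary consists of $2$-spheres, then make the spheres minimal and run the $\mu$-bubble prime decomposition; the handlebodies then arise from boundary-connected-summing $3$-ball factors (Remark \ref{handle-bdy-connected}, Remark \ref{bdy-top}), rather than from reattaching $1$-handles to balls whose existence would need separate justification.

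Finally, your first step—running the $\mu$-bubble separation lemma on $M$ with genus $\geq 1$ mean-convex boundary—requires care: the Separation Lemma \ref{sep} is stated with $\p_0 M$ a union of minimal $2$-spheres, allowing disc $\mu$-surfaces with free boundary. If $\p M$ has higher-genus components you need a different argument (e.g., that strict mean convexity excludes free boundary contact entirely), which you should supply. The paper never confronts this because by the time the $\mu$-bubble machinery is applied, all boundaries are $2$-spheres.
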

For the case of compact  $M$, this result was proven using  Ricci flow in \cite{C-L}. By   the connected sum operation in \cite {GL1, SY1}, we obtain the following corollary.  
\begin{corollary} Let $M$ be a connected  oriented $3$-manifold whose boundary is a disjoint union of closed surfaces. Then $M$ has a metric with uniformly positive scalar curvature and with mean convex boundary if and only if it is an infinite connected sum of spherical $3$-manifolds, handlebodies and some copies of $\mathbb{S}^2\times \mathbb{S}^1$. \end{corollary}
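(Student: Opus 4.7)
The ``only if'' direction of the corollary is immediate from Theorem~\ref{B}. The substance lies in the ``if'' direction, which requires building a complete metric of uniformly positive scalar curvature with mean convex boundary on any infinite connected sum $M$ of the three listed building blocks. My plan is to equip each block with a canonical PSC model metric and then assemble $M$ using the Gromov--Lawson/Schoen--Yau neck surgery uniformly along every edge of the defining graph $G$.

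The model metrics are chosen as follows. On a spherical $3$-manifold, take the round metric of constant sectional curvature $1$. On $\s^1\times \s^2$, take the product of a unit round $\s^2$ with a circle, so the scalar curvature is $2$. On a genus-$g$ handlebody, I would use the boundary-surgery construction of \cite{GL1, SY1} (in the compact case also established via Ricci flow in \cite{C-L}) to produce a metric with uniformly positive scalar curvature and totally geodesic---hence mean convex---boundary. A concrete realization is to view the handlebody as the iterated boundary connected sum of $g$ copies of $\s^1\times D^2$, the latter being half of $\s^1\times \s^2$ cut along a Clifford torus.

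To assemble $M$, for each vertex $v$ I would choose $d_v$ widely separated points on the model block $M_v$, deform the metric near each chosen point to a standard cylindrical neck $[0,\ell]\times \s^2$ of fixed length and unit cross-section while retaining a uniform scalar-curvature lower bound $\kappa>0$ (this is the neck deformation of \cite{GL1, SY1}, a local operation), and then for every edge $e=\{v,v'\}$ of $G$ glue the two corresponding necks by the prescribed orientation-reversing identification. The local finiteness of $G$ ensures that on each block the $d_v$ necks can be placed disjointly, so after countably many gluings one obtains a complete metric on $M$. Because every neck is modeled on the same fixed profile with scalar curvature bounded below by $\kappa$, the uniform lower bound persists globally. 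Since all gluings take place in the interior of the union of building blocks, away from $\p M$, the mean convexity of the boundary (inherited from the handlebody models) is untouched.

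The main obstacle is precisely the need for a \emph{uniform} lower bound on scalar curvature under infinitely many surgeries; this forces one to use a common neck profile at every edge rather than shrinking the necks as one moves out to infinity, and makes it essential that $G$ be locally finite so that only boundedly many necks ever sit on any single compact building block.
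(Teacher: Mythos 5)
Your proposal is correct and follows essentially the same route as the paper, whose treatment of this corollary is the one-line citation of the Gromov--Lawson/Schoen--Yau connected-sum operation \cite{GL1, SY1} together with \cite{C-L} for the compact building blocks; you simply make explicit the model metrics on each block and the uniformity bookkeeping that the paper leaves to the reader. One small imprecision worth flagging: local finiteness of $G$ gives only \emph{finitely} many, not \emph{boundedly} many, necks on each block, so if the degrees $d_v$ are unbounded one must be prepared to shrink the surgery radii block-by-block rather than use a single fixed neck size everywhere---but since the Gromov--Lawson deformation on a block of bounded geometry keeps the scalar curvature above $\kappa-\epsilon$ for any prescribed $\epsilon>0$ even as the neck radius tends to zero, this causes no loss of the uniform lower bound and hence no gap in the argument.
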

\subsection{Prime decomposition and  \texorpdfstring{$\mu$}{$\mu$}-bubbles} Introduced by Gromov in Section  $5\frac{5}{6}$ of  \cite{Gromov96}, $\mu$-bubbles are the stationary for the prescribed-mean-curvature functional. It is a breakthrough made by Chodosh and Li \cite{CL} to show that $\mu$-bubbles are closely related with the topology of the manifolds. Such a discovery also appears in many other works,  \cite{Gromov18, Gromov19, Gromov20}  \cite{Richard}\cite{zhu,zhu1}. 
\vspace{1mm}

In this article, we use $\mu$-bubbles to establish the prime decomposition for complete $3$-manifolds with uniformly positive scalar curvature. 

\vspace{1mm}

Let $(M^3, g)$ be a complete orientable $3$-manifold  with scalar curvature $\kappa\geq 1$. There are infinitely many stable $\mu$-surfaces, which serve as  topological boundaries of $\mu$-bubbles (see Lemma \ref{sep}). The uniform positivity of scalar curvature makes sure that each component of  stable $\mu$-surfaces is  a $2$-sphere (see Corollary  \ref{top}). 

These $2$-spheres from $\mu$-bubbles divide $M$ into some precompact components. Their closures  are  compact $3$-manifolds whose boundaries consist of some $2$-spheres. Then, we use the prime decomposition theorem  (see Theorem 1.5 \cite{HA}, Page 5) to obtain  a prime decomposition  for $M$. 
\begin{theorem}\label{C}Any complete, connected, orientable and open $3$-manifold with uniformly positive scalar curvature can be written as an infinite connected sum of some closed prime $3$-manifolds. \end{theorem}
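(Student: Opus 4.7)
The plan is to build an exhaustion of $M$ by a nested sequence of compact subdomains $\O_1 \subset \O_2 \subset \cdots$ whose topological frontiers are disjoint unions of embedded $2$-spheres obtained as stable $\mu$-surfaces, then apply the classical Kneser-Milnor prime decomposition to each compact piece in between, and finally assemble the pieces into a locally finite graph. To start, fix an exhaustion of $M$ by compact sets $K_1 \subset K_2 \subset \cdots$ with $\bigcup_i K_i = M$. Applying Corollary \ref{sep} iteratively (each time to the non-compact region outside the previously constructed surface) produces stable $\mu$-surfaces $\S_i$ such that $\S_i$ separates $K_i$ from infinity; after passing to a subsequence, the $\S_i$ may be assumed pairwise disjoint and nested outward. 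By Proposition \ref{top} together with the hypothesis $\kappa\geq 1$, every connected component of every $\S_i$ is a $2$-sphere.

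Next, let $N_i$ be the closure of the bounded region lying between $\S_{i-1}$ and $\S_i$ (set $\S_0 := \emptyset$, so $N_1$ is bounded only by $\S_1$). Each $N_i$ is a compact orientable $3$-manifold whose boundary is a finite disjoint union of $2$-spheres. Capping off each boundary sphere with a $3$-ball yields a closed orientable $3$-manifold $\widehat{N}_i$, to which the Kneser-Milnor prime decomposition theorem \cite{HA} applies: $\widehat{N}_i$ is a finite connected sum of closed prime $3$-manifolds $P_{i,1},\dots,P_{i,k_i}$. Undoing the caps, $N_i$ is recovered from this finite connected sum by puncturing once for each component of $\p N_i$. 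Encode this by a finite graph $G_i$ whose vertices carry the prime factors $P_{i,j}$, whose internal edges record the Kneser-Milnor connect-sum spheres, and whose external half-edges are in bijection with the boundary spheres of $N_i$.

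The global graph $G$ is then obtained from $\bigsqcup_i G_i$ by matching, for every sphere $S\subset\S_i$, the half-edge of $G_i$ coming from $S\subset\p N_i$ with the half-edge of $G_{i+1}$ coming from $S\subset\p N_{i+1}$. Its vertices are labelled by the closed prime $3$-manifolds $P_{i,j}$, and local finiteness is automatic since each $G_i$ is finite and each vertex of $G$ lies in a unique $G_i$. By construction $M$ is diffeomorphic to the infinite connected sum associated to $G$ in the sense of Definition \ref{inf-sum}, which proves Theorem \ref{C}.

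The real technical difficulty sits upstream, in Corollary \ref{sep} and Proposition \ref{top}: producing, inside a given non-compact end, a stable $\mu$-surface that truly separates a prescribed compact set from infinity, and then showing via the stability inequality and the lower bound $\kappa\geq 1$ that every component is a $2$-sphere. Once those are in place, the remaining obstacle is purely combinatorial bookkeeping: choosing a disjoint outward-nesting subfamily of $\mu$-surfaces so that its union is locally finite in $M$, and carefully tracking which boundary sphere of each $N_i$ corresponds to which half-edge when gluing the $G_i$ together. This latter step is routine provided one is careful that no accumulation of cutting spheres can occur on compact sets, which is guaranteed by arranging successive $\S_i$ to be separated by at least a fixed distance.
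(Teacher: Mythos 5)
Your proposal follows essentially the same strategy as the paper's proof of Theorem \ref{C}: invoke the separation lemma to produce a locally finite family of $2$-sphere $\mu$-surfaces cutting $M$ into compact pieces with spherical boundary, cap off and apply the Kneser--Milnor prime decomposition to each piece, and assemble the resulting finite graphs into one locally finite graph. The paper implements the cutting step concretely via concentric slabs $Y_k=\rho^{-1}([8\pi k,8\pi(k+1)])$ around a fixed basepoint, which automatically makes the $\mu$-surfaces from distinct slabs disjoint and forces every complementary component to sit inside two consecutive slabs (hence compact); this replaces the ``pass to a subsequence, nested outward, separated by a fixed distance'' arrangement you gesture at, and also sidesteps the small imprecision that Lemma \ref{sep} is stated for compact manifolds, so one must first carve out a compact annular domain before applying it rather than applying it directly to the non-compact exterior region.
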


 Chang, Weinberger and Yu \cite{CWY} employed K-theory to prove a similar decomposition with the assumption that the fundamental group is finitely generated. Besson-Bessi\`eres-Maillot \cite{BBM}) used Ricci flow to give the same decomposition, under a geometric condition.

\subsection{Idea of the proof for  Theorem \ref{A}} Let $(M, g)$ be a complete (non-compact) orientable $3$-manifold with scalar curvature $\kappa\geq 1$. According to Theorem \ref{C}, $M$  can be decomposed into  an infinite connected sum of some closed prime $3$-manifolds. 

Let me explain why each prime factor in the decomposition is homeomorphic to a spherical $3$-manifold or $\mathbb{S}^1\times\mathbb{S}^2$. Assume that  $\hat{M}_l$ is a closed prime $3$-manifold that appears in the decomposition of $M$. We can find  a compact $3$-manifold $M_l\subset M$ homeomorphic to $\hat{M}_l$ with an open $3$-ball removed. Furthermore, from Theorem \ref{C}, there is a non-compact $3$-manifold $\hat{M}'_l$ with \[M\cong \hat{M}_l\#\hat{M}'_l\]

We may assume that $\hat{M}_l$ is not homeomorphic to one of $\mathbb{S}^3$, $\mathbb{S}^2\times \mathbb{S}^1$ or $\mathbb{R}P^3$. Thus, $\hat{M}_l$ is irreducible and $\p M_l$ is not homotopically trivial in $M$. 

\vspace{2mm}

In $(M^3, g)$,  the area of  surfaces isotopic to $\partial M_l$ is bounded below by a positive constant. For our convenience, we assume that the area-minimizing sequence in the isotopy class containing $\partial M_l$ converges in the  sense of currents. The limit is a disjoint union of minimal $2$-spheres. We  observe that the complement of the limit has a component $M''_l$ which is homeomorphic to $\hat{M}_l$ with some punctures (see Proposition  \ref{top-replace}). Thus, $(M''_l, g|_{M''_l})$ has minimal boundary and it is $\hat{M}_l$ with some punctures. 

By the positivity of scalar curvature, we find that such a manifold has a finite fundamental group.  The Poincar\'e conjecture (see \cite{MT, cao-zhu, BBBMP}) implies that   $M''_l$ is a spherical $3$-manifold with punctures (i.e. $\hat{M}_l$ is spherical). 
\vspace{2mm}

Generally, the non-compactness of $M$ may lead that the area-minimizing sequence in the isotopy class of $\p M_l$ diverges. This phenomenon yields  a kind of boundary condition, called \emph{mixed boundary} (see Definition \ref{mixed}).  

 An orientable  $3$-manifold $(X, g)$  has \emph{mixed boundary} if the following conditions hold: 
\begin{itemize}
 \item[(a)] the boundary $\p X$ consists of  two mutually disjoint families $\{S_j\}_{j\in J'}$ and $\{S_j\}_{j\in J''}$, of 2-spheres;
 \item[(b)] for $j\in J'$,  each $S_j$ is minimal for $g$ ;
 \item[(c)] there are disjoint punctured $3$-spheres $\{\O_l\}_l$ whose boundaries contain  $\amalg_{j\in J''} S_j$;
 \item[(d)] the metric $g$ enjoys that for each $l$,
 $$d(\hat{S}_l, \p \O_l\cap \amalg_{j\in J''}S_j)>4\pi,$$
where $\hat{S}_l:=\p\O_l\cap \i~X$ is a disjoint union of some $2$-spheres.
\end{itemize}

\vspace{1mm}

To sum up, we begin with  the isotopy class of $\partial M_l$ and then use the area-minimizing sequence to construct  an open  $3$-manifold $M''_l\subset M$ (see Theorem \ref{mixed-existence}). Not only  is it homeomorphic to $\hat{M}_l$ with some punctures but also the closure of   $(M'_l, g|_{M'_l})$ is a compact $3$-manifold with mixed boundary.

These properties allow us to study the topological structure of such a manifold. 
\begin{theorem}\label{classify} Let $(M^3, g)$ be a compact $3$-manifold with scalar curvature $\kappa\geq 1$ and with mixed boundary. If $\pi_2(M)$ is generated by the spheres in $\p M$, then $\i M$ is homeomorphic to a spherical $3$-manifold with finitely many punctures. 
\end{theorem}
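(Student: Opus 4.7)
The plan is to use $\mu$-bubbles inside each $\O_l$ to trade the non-minimal portion of $\p M$ for interior 2-spheres, cap off the resulting sphere boundary, and invoke Perelman's classification of closed 3-manifolds of positive scalar curvature.

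Inside each punctured 3-sphere $\O_l$, let $\rho$ be the distance to $\p\O_l\cap \amalg_{j\in J''}S_j$. The hypothesis $d(\hat S_l,\p\O_l\cap \amalg_{j\in J''}S_j)>4\pi$ provides a slab on which $\rho$ sweeps an interval of length exceeding $4\pi$. I would choose a smooth warping function $h(t)$ that blows up to $\mp\infty$ at the two endpoints while satisfying a Riccati-type bound compatible with $\kappa\geq 1$; such an $h$ exists precisely because $4\pi$ is enough width to drive the associated ODE across both barriers (cf.\ \cite{Gromov19}). Minimizing the $\mu$-bubble functional $|\S|-\int_E h\circ\rho\,dV$ over Caccioppoli sets $E\subset \O_l$ with the prescribed asymptotics yields a smooth stable hypersurface $\S_l$ strictly in the interior of the slab, and the stability inequality combined with $\kappa\geq 1$ forces every component of $\S_l$ to be a 2-sphere, by the same mechanism underlying Proposition \ref{top}.

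Let $N$ be the component of $M\setminus \amalg_l \S_l$ whose closure contains every minimal boundary sphere $\{S_j\}_{j\in J'}$. Its boundary is a disjoint union of 2-spheres --- the $S_j$ for $j\in J'$ together with the components of the $\S_l$ --- and each $\S_l$ component is mean-convex with respect to the outward normal of $N$. Because each $\O_l$ is a punctured 3-sphere (so $\pi_2(\O_l)$ is generated by $\p\O_l$) and $\pi_2(M)$ is generated by $\p M$ by hypothesis, a Mayer--Vietoris argument for the splitting of $M$ across $\amalg_l \S_l$ shows that $\pi_2(N)$ is generated by $\p N$. I then attach a Gromov--Lawson bell to every component of $\p N$ --- a 3-ball carrying a rotationally symmetric metric of positive scalar curvature whose boundary matches the given sphere --- and smooth along the gluing seams, obtaining a closed orientable 3-manifold $\hat N$ with a metric of positive scalar curvature.

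In $\hat N$ every former boundary sphere of $N$ bounds a cap and is therefore null-homotopic; combined with the previous step this gives $\pi_2(\hat N)=0$, so $\hat N$ is irreducible. By Perelman's classification of closed orientable 3-manifolds of positive scalar curvature \cite{P1,P2,P3}, $\hat N$ is a connected sum of spherical space forms and copies of $\s^1\times \s^2$; irreducibility then forces $\hat N\cong \s^3/\Gamma$ for a finite group $\Gamma$, and reversing the cap-off shows $\i M$ is homeomorphic to $\s^3/\Gamma$ with finitely many points removed. The decisive difficulty is the $\mu$-bubble construction of the first step: $h$ must simultaneously blow up at both endpoints of the slab (to trap the minimizer in the interior) and satisfy the Riccati inequality forced by $\kappa\geq 1$ (to make the stability inequality productive), for which the $4\pi$ width is sharp; the remaining ingredients --- the $\pi_2$ transfer, the Gromov--Lawson cap-off, and the invocation of Perelman --- are comparatively standard.
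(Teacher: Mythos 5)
The paper's argument is quite different from yours and avoids the step where your proposal runs into trouble. The paper first shows (Lemma~\ref{pre}) that $\pi_1(\hat M)$ contains no copy of $\mathbb{Z}$: passing to a cyclic cover, it takes a circle $\sigma$ at distance $>4\pi$ from a generator $\gamma$ of $\pi_1$, deforms the metric to mean-convex boundary only in collar regions far from $\hat S_l$, spans $\sigma$ by a stable minimal disc $D$, and derives a contradiction from the distance estimate of Proposition~\ref{dist} (the disc must meet $\gamma$ but cannot reach that far). Together with $\pi_2(\hat M)=0$ and \cite{luck}, this forces $\pi_1(\hat M)$ finite, and the Poincar\'e conjecture finishes it. No $\mu$-bubble, no cap-off, no Perelman classification beyond Poincar\'e is used in this step.

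The decisive gap in your approach is the sentence asserting that each component of $\S_l$ is mean-convex with respect to the outward normal of $N$. A free-boundary-free $\mu$-bubble minimizer has prescribed mean curvature $H=h$ along $\S_l$ (Lemma~\ref{first-var}), where $h=h(\rho)$ blows up to $-\infty$ at one end of the slab in $\O_l$ and to $+\infty$ at the other. The minimizer can and typically does pass through the region where $h$ changes sign, so $H$ may be negative on part of $\S_l$, positive on another part, and you have no barrier controlling which side of the zero set of $h$ the bubble lies on; the stability inequality controls $\kappa+\tfrac12 h^2+\partial_\nu h$, not the sign of $h$ itself. Without mean convexity from $N$'s side, the Gromov--Lawson bell attachment does not apply, nor does the doubling argument of Lemma~\ref{doubling} or the metric deformation of Theorem~\ref{metric-deformation} (both of which require mean-convex boundary). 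To salvage your route one would need either to show that the $\mu$-bubble can be chosen on the correct side of the $h=0$ locus, or to replace the naive cap-off with a Gromov-style conical/torpedo extension that exploits the full $\mu$-bubble stability inequality rather than just the sign of $H$; neither is in your write-up, and the paper sidesteps the whole issue. A smaller point: your ``Mayer--Vietoris for $\pi_2$'' across $\amalg_l\S_l$ is really an appeal to uniqueness of prime decomposition (each piece of $\O_l\setminus\amalg\S_l$ is a punctured $3$-sphere, so $\hat N\cong\hat M$); $\pi_2$ itself has no Mayer--Vietoris sequence.
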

If $M$ is non-compact, it is homeomorphic to $\mathbb{R}^3$ with (at most) countably many disjoint $3$-balls removed. The proof of Theorem \ref{classify} follows the same approach for the  mean convex boundary case.

To show  that $M''_l$ is a spherical $3$-manifold with finitely many punctures, we use Theorem \ref{classify}, which implies that  $\hat{M}_l$ is a spherical $3$-manifold. 

\vspace{3mm}

The paper is organized as follows:
\begin{itemize} 
\item In Section 2, we  provide a review of  relevant background and introduce the infinite connected sum and boundary connected sum.
\item  Sections 3  focuses on $\mu$-bubbles and the prime decomposition. 
\item In Section 4 , we address minimal surfaces and related problems. 
\item  Sections 5 and 6 introduce the mixed boundary condition and utilize it to complete the proof of Theorem \ref{A}. 
\item In Section 7, we use a geometric version of loop lemma to prove  Theorem \ref{B}. 

\end{itemize}

We would like to thank L. Bessi\`ere, G. Besson and S. Maillot for introducing this question to me and for many helpful discussions. We are  grateful to M. Gromov, B. Lawson and B. Hanke for their interest. I am also thankful to Jintian Zhu for his helpful discussions and suggestions.  

I  am also very grateful to the referees for many helpful comments that improved the exposition of this paper. This work was  partially supported by SPP2026,  ``\emph{Geometry at infinity}".

\section{3-manifolds and infinite connected sum} In this section, we begin with several topological properties of open $3$-manifolds. Then, we introduce the so-called \emph{infinite connected sum} and give several examples.

\subsection{Topology Basis} The Poincar\'e conjecture (see \cite{MT}, \cite{BBBMP} and \cite{cao-zhu}) tells that a compact contractible 3-manifold is homeomorphic to the unit ball, $\mathbb{B}^3\subset \mathbb{R}^3$.  One useful corollary is as follows:

\begin{proposition}\label{ball}\textnormal{(See \cite{MT, BBBMP, cao-zhu} and Proposition 3.10 of \cite{HA} on Page 50)} Let $M$ be a $3$-manifold (possibly with boundary, not necessarily compact or orientable) and $\Sigma\subset M$  an embedded 2-sphere.  The sphere $\S$ is homotopically trivial  in $M$ if and only if it bounds a $3$-ball in $M$. 
\end{proposition}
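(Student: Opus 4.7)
The ``if'' direction is immediate since a $3$-ball is contractible. For the converse, my plan is to pass to a universal cover and invoke the Poincar\'e conjecture cited in the statement. First I would reduce to a compact setting: any null-homotopy $f \colon D^3 \to M$ of $\Sigma$ has compact image, so it suffices to exhibit the bounding $3$-ball inside a compact codimension-zero submanifold $N \subset M$, which I would take to be a regular neighborhood of $f(D^3) \cup \Sigma$. Then I would lift to the universal cover $p \colon \tilde{N} \to N$: since $\Sigma \cong \mathbb{S}^2$ is simply connected, each component of $p^{-1}(\Sigma)$ is mapped homeomorphically onto $\Sigma$, so I can fix a single lift $\tilde{\Sigma}$, noting that the null-homotopy lifts to a null-homotopy of $\tilde{\Sigma}$ in $\tilde{N}$.

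The core step is to show that $\tilde{\Sigma}$ bounds a $3$-ball in $\tilde{N}$. Being null-homologous, $\tilde{\Sigma}$ separates $\tilde{N}$; I would let $\tilde{W}$ be the closure of the component containing the image of the lifted null-homotopy. The choice of $N$ as a regular neighborhood should arrange $\tilde{W}$ to have $\tilde{\Sigma}$ as its only boundary component, and a van Kampen argument exploiting $\pi_1(\tilde{N}) = 1$ should force $\pi_1(\tilde{W}) = 1$. Capping $\tilde{W}$ along $\tilde{\Sigma}$ by a standard $3$-ball then yields a closed simply-connected $3$-manifold, which by the Poincar\'e conjecture is $\mathbb{S}^3$; consequently $\tilde{W}$ is itself a $3$-ball.

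Finally, I would descend the ball to $M$ by verifying that $p|_{\tilde{W}}$ is injective. Distinct lifts of $\Sigma$ are pairwise disjoint in $\tilde{N}$, so for any non-trivial deck transformation $g$ either $g(\tilde{W}) \cap \tilde{W} = \emptyset$, or one of $\tilde{W}, g(\tilde{W})$ strictly contains the other; iterating $g$ in the latter case produces an infinite strictly nested chain of compact $3$-balls, contradicting proper discontinuity of the deck action. Thus $p(\tilde{W})$ is the desired embedded $3$-ball. The step I expect to be hardest is establishing simple connectivity of $\tilde{W}$ before capping --- this requires careful control of the boundary components of $\tilde{W}$ arising from $\partial \tilde{N}$, which motivates choosing $N$ as a thin regular neighborhood of $f(D^3) \cup \Sigma$ rather than an arbitrary compact submanifold.
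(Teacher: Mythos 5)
The paper itself gives no proof of this proposition; it is cited from \cite{HA} (Proposition 3.10) and the Poincar\'e conjecture references, so there is no in-paper argument to compare against --- I can only assess your argument on its own merits, and it has a genuine gap exactly where you flag it.

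The problem is the claim that a suitable choice of the compact submanifold $N$ will make $\tilde{\Sigma}$ the \emph{only} boundary component of $\tilde{W}$. Taking $N$ to be a thin regular neighborhood of $f(D^3)\cup\Sigma$ does not achieve this. The sphere $\Sigma$ separates $N$ into two pieces, and in general both of them meet $\partial N$; after lifting, each side $\tilde{W}$, $\tilde{W}'$ of $\tilde{\Sigma}$ in $\tilde{N}$ acquires boundary components coming from $\partial\tilde{N}$, and moreover $\tilde{W}$ need not be compact at all when $\pi_1(N)$ is infinite. Without compactness of $\tilde{W}$ and $\partial\tilde{W}=\tilde{\Sigma}$, you cannot cap off along $\tilde{\Sigma}$ to obtain a closed simply connected $3$-manifold, so the Poincar\'e conjecture never enters. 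Note also that even if you did control things so that $\partial\tilde{W}$ consisted of $\tilde{\Sigma}$ together with finitely many other \emph{spheres}, the conclusion still fails: capping every boundary sphere gives $\mathbb{S}^3$, so $\tilde{W}$ would only be $\mathbb{S}^3$ with several balls removed, which is a ball only when $\tilde{\Sigma}$ is the sole boundary component (compare $\mathbb{S}^2\times[0,1]$, which is compact, simply connected, bounded by two spheres, and not a ball). Your van Kampen step is correct as stated --- since $\pi_1(\tilde{\Sigma})$ is trivial, the decomposition $\tilde{N}=\tilde{W}\cup_{\tilde{\Sigma}}\tilde{W}'$ yields $\pi_1(\tilde{N})\cong\pi_1(\tilde{W})\ast\pi_1(\tilde{W}')$, so $\pi_1(\tilde{W})=1$ --- and the nested-chain descent argument is a standard and valid technique once a compact ball $\tilde{W}$ with $\partial\tilde{W}=\tilde{\Sigma}$ exists upstairs. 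So the gap is isolated to the production of that ball, which is the substantive content of the proposition; a correct proof (as in the cited Proposition 3.10 of \cite{HA}) takes a different route, working in the compact setting with the prime decomposition and an innermost-disk isotopy to push $\Sigma$ into a single prime summand, where the Poincar\'e conjecture is used to rule out fake $3$-cells rather than to identify a capped-off universal cover.
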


If each embedded sphere in $M$ bounds a $3$-ball, such a $3$-manifold is called irreducible. In what follows, we will repeatedly use a family of irreducible $3$-manifolds with boundaries, called handlebodies. 

\begin{definition}\label{handle}(See Page 46  of \cite{rol}) A closed \emph{handlebody} is a space obtained from the closed $3$-ball $D^3$ ($0$-handle) by attaching $g$ distinct copies of $D^2\times [-1, 1]$ (1-handle) with the homeomorphisms identifying the $2g$ discs $D^2\times \{\pm1\}$ to $2g$ disjoint $2$ disks on $\p D^3$, all to be done in such a way that the resulting $3$-manifold is orientable. The integer $g$ is called the genus of the handlebody.
\end{definition}

\begin{definition}\label{bdy-con-sum}(See Page 39 of \cite{rol}) Let $M_1$ and $M_2$ be two  $n$-manifolds with boundaries. A \emph{boundary connected sum} $M_1\#_{\p}M_2$ is constructed by identifying standard balls $B^{n-1}_i\subset \p M_i$. \end{definition}

\begin{remark}\label{handle-bdy-connected} (1) A handlebody of genus $g$ is homeomorphic to a boundary connected sum of $g$ solid tori (see Page 46 of \cite{rol});\\
(2) A boundary connected sum of two handlebodies is a handlebody;\\
(3) A boundary connected sum of  a handlebody with itself is a handlebody. 
\end{remark}

\subsection{Open $3$-manifolds }  In this subsection, we assume that $M^3$ is  an open manifold which is homotopically equivalent to $\mathbb{S}^1$.

\begin{lemma}\label{cut} Let $M$ be an open $3$-manifold which is homotopically equivalent to $\mathbb{S}^1$. Then any closed embedded surface $\S\subset M$ separates $M$ into a compact part and a non-compact part. 
\end{lemma}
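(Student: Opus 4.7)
The plan is to exploit the fact that $M\simeq\mathbb{S}^1$ forces $H_2(M;\mathbb{Z}/2)=H_3(M;\mathbb{Z}/2)=0$. After passing to the orientation double cover if necessary (which is itself an open $3$-manifold homotopy equivalent to $\mathbb{S}^1$ since its $\pi_1$ is a subgroup of $\mathbb{Z}$ of index $\le 2$), I may assume $M$ is orientable and $\Sigma$ is two-sided. The vanishing of $[\Sigma]\in H_2(M;\mathbb{Z}/2)$ then implies that $\Sigma$ separates $M$, so $M\setminus\Sigma$ decomposes as a finite disjoint union of open $3$-submanifolds $A_1\sqcup\cdots\sqcup A_m$ with $\partial\overline{A_i}\subset\Sigma$.

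That at least one $\overline{A_i}$ is \emph{non-compact} is immediate: if every $\overline{A_i}$ were compact, then $M=\bigcup_i\overline{A_i}$ would be a finite union of compact sets and hence compact, contradicting the hypothesis that $M$ is open. For the \emph{compact} side, I would run the long exact sequence of the pair $(M,\Sigma)$ with $\mathbb{Z}/2$ coefficients. Since $H_3(M)=H_2(M)=0$, this produces an isomorphism $H_3(M,\Sigma;\mathbb{Z}/2)\cong H_2(\Sigma;\mathbb{Z}/2)=(\mathbb{Z}/2)^k$, where $k\geq 1$ is the number of components of $\Sigma$. Excising bicollar neighborhoods of $\Sigma$ splits the left-hand side as $\bigoplus_i H_3(\overline{A_i},\partial\overline{A_i};\mathbb{Z}/2)$, and the standard fact that this relative top singular homology of a connected $3$-manifold with boundary is $\mathbb{Z}/2$ when the manifold is compact and $0$ otherwise then forces at least $k\geq 1$ of the $\overline{A_i}$ to have compact closure.

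The step I expect to need the most care is justifying the splitting $H_3(M,\Sigma;\mathbb{Z}/2)\cong\bigoplus_i H_3(\overline{A_i},\partial\overline{A_i};\mathbb{Z}/2)$ together with the vanishing of each summand in the non-compact case. A purely chain-level alternative avoiding duality is to choose a smooth triangulation of $M$ in which $\Sigma$ is a subcomplex: since $[\Sigma]=0$ in $H_2(M;\mathbb{Z}/2)$, there exists a finite mod-$2$ simplicial $3$-chain $C$ with $\partial C=\Sigma$, and a direct boundary inspection (each $2$-simplex of $\Sigma$ must be a face of exactly one $3$-simplex in $C$, and each interior $2$-simplex must be a face of an even number of $3$-simplices in $C$) shows that $C$ must be the indicator chain of a non-empty union of some of the $A_i$; the finiteness of $C$ then forces those $A_i$ to have compact closure, which is exactly what is required.
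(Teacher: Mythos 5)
Your argument is correct and reaches the same conclusion by a route that is related to, but distinct from, the paper's. Both proofs hinge on the vanishing of $H_2(M)$ and $H_3(M)$, but they package the homology differently. For separation, the paper chooses a compact connected $3$-submanifold $M_0\supset\Sigma\cup\sigma$ with $[\Sigma]=0$ in $H_2(M_0;\mathbb{Z})$ and applies Lefschetz duality inside $M_0$ to contradict the intersection number $\pm1$; you instead work globally with $\mathbb{Z}/2$ coefficients and read separation directly off $[\Sigma]=0\in H_2(M;\mathbb{Z}/2)$. For the compact side, the paper runs Mayer--Vietoris for $M=\overline{M}_1\cup_\Sigma\overline{M}_2$ (with $\Sigma$ connected) together with the observation that $[\Sigma]\neq 0$ in $H_2(\overline{M}_i;\mathbb{Z})$ whenever $\overline{M}_i$ is non-compact, while you use the long exact sequence of the pair $(M,\Sigma)$, excision, and the mod-$2$ top-homology criterion for compactness of a manifold-with-boundary. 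These two packagings carry the same information (they are intertwined by the long exact sequences of the pairs $(\overline{M}_i,\Sigma)$), but yours is orientation-blind and handles disconnected $\Sigma$ uniformly, whereas the paper's implicitly assumes $\Sigma$ connected and orientable. Your chain-level alternative in the last paragraph is genuinely more elementary: given a triangulation of $M$ with $\Sigma$ as a subcomplex (Moise), a finite mod-$2$ $3$-chain $C$ with $\partial C=\Sigma$ must be the indicator chain of a nonempty, proper, finite union of components of $M\setminus\Sigma$, which yields separation and the existence of a compact complementary component in a single stroke.

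One simplification worth making: the passage to the orientation double cover is unnecessary and, as written, incomplete. The $\mathbb{Z}/2$ argument does not see orientations, and two-sidedness of each component of $\Sigma$ is itself forced by $[\Sigma]=0\in H_2(M;\mathbb{Z}/2)$: a one-sided component would admit a small loop crossing it once, so its mod-$2$ Poincar\'e dual in $H^1_c(M;\mathbb{Z}/2)\cong H_2(M;\mathbb{Z}/2)=0$ would be nonzero, a contradiction. Moreover, if you did pass to a double cover you would still owe an argument lifting $\Sigma$ and transporting the compactness/non-compactness conclusions back down to $M$, which the proposal does not supply. It is cleaner to drop this step entirely and run the $\mathbb{Z}/2$ argument on $M$ itself.
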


We will prove it in Appendix A.

\begin{corollary}\label{exhaustion} Let $M$ be an open $3$-manifold that is homotopically equivalent to $\mathbb{S}^1$. Then there is an exhaustion by compact domains $\{M_i\}_i$ of $M$ so that for each $i$, $M_i$ is compact  and $\p M_i$ is a connected closed surface. 
\end{corollary}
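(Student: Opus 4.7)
My plan is to start from any compact exhaustion of $M$ and modify each compact piece so that its boundary becomes a single connected closed surface. By second countability, I first fix a nested compact exhaustion $K_1 \subset K_2 \subset \cdots$ of $M$. Then I build $M_i$ inductively: having constructed $M_{i-1}$, I choose a compact, connected, smooth $3$-dimensional submanifold $N_i \subset M$ with $K_i \cup M_{i-1} \subset \text{Int}(N_i)$ and smooth boundary $\partial N_i = \Sigma_1 \sqcup \cdots \sqcup \Sigma_m$, a finite disjoint union of connected closed surfaces. This is a standard regular-neighborhood construction, and since $M$ is connected one can arrange $N_i$ to be connected as well.

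The heart of the argument is to use Lemma \ref{cut} to analyze the components of $M \setminus N_i$. For each $j$, the connected surface $\Sigma_j$ separates $M$ into two open pieces; I designate the one containing $\text{Int}(N_i)$ as $A_j$ and the other as $B_j$, and by Lemma \ref{cut} exactly one of $\overline{A_j}$, $\overline{B_j}$ has compact closure. A local analysis at the separating surface shows that the topological boundary of $B_j$ in $M$ is exactly $\Sigma_j$, and that every component of $M \setminus N_i$ must lie entirely on one side of each $\Sigma_k$. Together these observations produce a bijection between the boundary components $\Sigma_j$ of $\partial N_i$ and the complementary components $B_j$ of $M \setminus N_i$, and in particular they force $B_k \subset A_j$ whenever $k \ne j$.

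I then expect to show that exactly one of the $B_j$'s fails to be pre-compact. If $B_{j_0}$ is non-pre-compact, then by Lemma \ref{cut} its counterpart $\overline{A_{j_0}}$ is compact; since $\overline{A_{j_0}}$ contains every $B_k$ with $k \ne j_0$, each such $\overline{B_k}$ is compact. Conversely, at least one $B_j$ must be non-pre-compact, since otherwise $M = N_i \cup \bigcup_j \overline{B_j}$ would be a finite union of compact sets, contradicting the non-compactness of $M$. Defining
\[
M_i := N_i \cup \bigcup_{j \ne j_0} \overline{B_j},
\]
I obtain a compact connected $3$-manifold whose boundary $\partial M_i = \Sigma_{j_0}$ is a single connected closed surface, and the inductive choice $K_i \cup M_{i-1} \subset \text{Int}(N_i)$ makes $\{M_i\}$ a nested exhaustion of $M$.

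The main subtlety is establishing the clean bijection $\Sigma_j \leftrightarrow B_j$: a priori a single complementary component could meet several $\Sigma_j$'s, or two different $\Sigma_j$'s could bound the same component, and then the argument for ``exactly one non-pre-compact component'' would break. Lemma \ref{cut} together with the connectedness of each $\Sigma_j$ is precisely what rules this out, because $B_j$ is one open side of the separating connected surface $\Sigma_j$ and hence its topological boundary in $M$ coincides with the single surface $\Sigma_j$. Once this rigidity is in place, the whole construction reduces to the straightforward step of filling in all of the pre-compact complementary components at once.
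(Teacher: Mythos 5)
Your proof is correct and follows essentially the same strategy as the paper: enlarge the compact set to a nice submanifold $N_i$, apply Lemma~\ref{cut} to each boundary surface, observe that at most one can have its non-compact side away from $N_i$, and fill in all the compact complementary pieces (indeed your $M_i=N_i\cup\bigcup_{j\neq j_0}\overline{B_j}$ is precisely the closure $\overline{A_{j_0}}$, i.e.\ the compact piece of $M\setminus\Sigma_{j_0}$ from Lemma~\ref{cut}). The only difference is that you spell out the clean bijection between the boundary components of $\partial N_i$ and the components of $M\setminus N_i$, where the paper instead argues briefly by contradiction that some $M_{\Sigma_i}$ already contains $K$; your bookkeeping of the $A_j$ and $B_j$ is a careful way of making that step airtight.
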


\begin{proof} It suffices to show that  any precompact $K\subset M$ is a subset of  some compact set $M'\subset M$, where $\p M'$ is a connected closed surface. 

\vspace{1mm}

We may assume that $K$ is connected and $\p K$ is smooth. (If not, we find a compact and connected set containing $K$ to replace it.)  The boundary $\p K$ has finitely many components, $\{\S_i\}_{i\in I}$. We use Lemma \ref{cut} to find a compact manifold $M_{\S_i}\subset M$ with $\p M_{\S_i}=\S_i$ for each $i$. 

We can conclude that there is $K$ is contained in some $M_{\S_i}$. If not, the set $\hat{K}:=K\cup_{i\in I} (\cup_{\S_i} M_{\S_i})\subset M$ is a closed manifold and $\dim (\hat{K})=\dim (M)=3$, which implies that  $M$ is not connected or $M=\hat{K}$ is compact. Since  $M$ is homotopically equivalent to $\mathbb{S}^1$,  it must be  connected and open, leading to  a contradiction.   

Some set $M_{\S_i}$ contains $K$ and its boundary is a closed and connected surface. It is the required candidate in the previous statement. 
\end{proof}

\begin{lemma}\label{curve} Let $M$ be an open $3$-manifold which is homotopically equivalent to $\mathbb{S}^1$ and $\gamma$ be  a closed curve that generates $\pi_1(M)$. Assume that  a closed surface $\S\subset M$ bounds a compact manifold $M_{\S}\subset M$. If  $\gamma$ is a subset of $M_{\S}$, there is a closed curve $\sigma\subset \S$ such that it is contractible in $M$ but non-contractible in $M\setminus \gamma$.  \end{lemma}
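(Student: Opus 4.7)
The plan is to produce a closed curve $\sigma \subset \Sigma$ which bounds a $2$-chain in $M$ meeting $\gamma$ in algebraic intersection $\pm 1$. Because $M \simeq \mathbb{S}^1$ has $H_2(M;\mathbb{Z}) = 0$ and $\pi_1(M) \cong \mathbb{Z}$ is abelian, null-homology and contractibility coincide for loops in $M$, so such a $\sigma$ is automatically contractible in $M$. If $\sigma$ were also contractible in $M \setminus \gamma$, bounding a $2$-chain $G \subset M \setminus \gamma$, then the difference with our bounding $2$-chain would be a closed $2$-cycle in $M$ with non-zero intersection with $\gamma$, contradicting $H_2(M) = 0$.

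For the spanning surface, I would use the fact that $M \simeq \mathbb{S}^1 = K(\mathbb{Z},1)$ to pick a smooth map $f\colon M \to \mathbb{S}^1$ realizing the $\pi_1$-isomorphism; then $\deg(f|_\gamma) = \pm 1$. For a regular value $\theta$ of $f|_{M_\Sigma}$ (also regular for $f|_\Sigma$), the preimage $F := f^{-1}(\theta) \cap M_\Sigma$ is a compact properly embedded oriented surface in $M_\Sigma$ with $\partial F \subset \Sigma$ and $F \cdot \gamma = \deg(f|_\gamma) = \pm 1$. The boundary $\partial F$ is a non-empty disjoint union of simple closed curves $c_1, \ldots, c_n$ on $\Sigma$ (non-emptiness because a closed $F$ would have $[F] = 0$ in $H_2(M) = 0$, forcing $F \cdot \gamma = 0$).

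To reduce to a single closed curve, I would band-sum the components $c_i$ along pairwise disjoint arcs $\alpha_2, \ldots, \alpha_n$ in $\Sigma$, with $\alpha_i$ joining $c_1$ to $c_i$ and with interiors disjoint from $\partial F$ and each other. The resulting closed curve $\sigma \subset \Sigma$ is homologous on $\Sigma$ to $\partial F$, so $\sigma - \partial F = \partial T$ for some $2$-chain $T \subset \Sigma$, giving $\sigma = \partial(F + T)$ in $M$ with $(F + T) \cdot \gamma = F \cdot \gamma = \pm 1$ since $T \subset \Sigma$ is disjoint from $\gamma$. Applying the obstruction from the first paragraph, this $\sigma$ is contractible in $M$ but not in $M \setminus \gamma$.

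The step requiring the most care is the existence of the connecting arcs $\alpha_i$, which needs the components $c_i$ to lie on a single connected component of $\Sigma$; in the intended applications $\Sigma$ is connected by Corollary \ref{exhaustion}, so this is automatic. For a disconnected $\Sigma$ one localizes to a component of $\Sigma$ on which $\partial F$ has non-trivial homology class (at least one such component exists, since a vanishing $[\partial F] \in H_1(\Sigma)$ would let $F - S$ be a closed $2$-cycle in $M$ with intersection $\pm 1$ with $\gamma$, contradicting $H_2(M) = 0$).
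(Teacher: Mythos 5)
Your proof is correct, but it takes a genuinely different route from the paper's. The paper proves Lemma~\ref{curve} by induction on the genus of $\Sigma$: if $\pi_1(\Sigma)\to\pi_1(M\setminus\gamma)$ is injective, any nontrivial element of $\ker\bigl(\pi_1(\Sigma)\to\pi_1(M)\cong\mathbb{Z}\bigr)$ serves as $\sigma$; otherwise the loop theorem yields a compressing disc for $\Sigma$ in $M\setminus\gamma$, the two surgery types from Appendix~A lower the genus while preserving the hypotheses, and the anchor case $g(\Sigma)=0$ is ruled out because $\gamma$ would then lie inside a $3$-ball. You instead manufacture an explicit Seifert-type surface $F\subset M_\Sigma$ for $\gamma$ as a regular preimage of a classifying map $M\to\mathbb{S}^1$, band-sum its boundary circles on $\Sigma$ into a single curve $\sigma$ of algebraic linking number $\pm 1$ with $\gamma$, and conclude from $H_2(M)=0$ (which kills both the closed-surface alternative and any would-be spanning $2$-chain in $M\setminus\gamma$) together with the abelianness of $\pi_1(M)$ (so null-homologous equals null-homotopic). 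Your route sidesteps the genus induction and the surgery case analysis, at the cost of orientation-dependent intersection theory---which both proofs implicitly require anyway, since the proof of Lemma~\ref{cut} already invokes Poincar\'e duality and intersection numbers. The band-summing step does need the boundary circles of $F$ to lie on a single component of $\Sigma$; this holds because $\Sigma$ here is connected (as the paper's genus induction presupposes and as Corollary~\ref{exhaustion} supplies in the application). Your parenthetical for disconnected $\Sigma$ is only a sketch: locating a component $\Sigma_i$ with $[\partial F\cap\Sigma_i]\neq 0$ does not by itself produce a bounding chain for a single curve on $\Sigma_i$ unless the boundary on the remaining components can first be capped off inside $\Sigma$; but since the lemma is only invoked for connected $\Sigma$, this does not affect its use in the paper.
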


We prove this in Appendix A.  Corollary \ref{exhaustion} and Lemma \ref{curve} implies
\begin{corollary}\label{required-curve}Let $M$ be an open $3$-manifold which is homotopically equivalent to $\mathbb{S}^1$ and $\gamma$ be  a closed curve generating $\pi_1(M)$. For any compact set $K$ containing $\gamma$, there is a closed curve $\sigma \subset M\setminus K$ such that it is contractible in $M$ but non-contractible in $M\setminus \gamma$. 
\end{corollary}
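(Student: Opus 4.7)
The plan is to promote Corollary~\ref{exhaustion} to a statement in which the given compact set sits strictly inside a compact submanifold with connected boundary, and then apply Lemma~\ref{curve} directly to that submanifold. Concretely, I would first produce a compact submanifold $M'\subset M$ with $\partial M'=:\Sigma$ a connected closed surface and $K\subset \mathrm{int}\,M'$. To arrange the strict containment, I would apply Corollary~\ref{exhaustion} not to $K$ itself but to a compact neighborhood $U$ of $K$ (for example, a closed regular neighborhood of $K$ in $M$); the resulting compact manifold $M'$ with connected boundary contains $U$ and hence $K$ in its interior. Equivalently, one can take the exhaustion $\{M_i\}$ produced by Corollary~\ref{exhaustion}: there is some $i_0$ with $K\subset M_{i_0}$, and then $M':=M_{i_0+1}$ has $K\subset M_{i_0}\subset \mathrm{int}\,M_{i_0+1}=\mathrm{int}\,M'$ with $\partial M'$ connected.

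Once $M'$ is in hand, I would invoke Lemma~\ref{curve} with $M_\Sigma:=M'$ and surface $\Sigma:=\partial M'$. The hypotheses of the lemma are satisfied: $\Sigma$ bounds the compact submanifold $M'$, and $\gamma\subset K\subset M'$ by assumption. The lemma then supplies a closed curve $\sigma\subset \Sigma$ which is contractible in $M$ but non-contractible in $M\setminus\gamma$. Because $K\subset \mathrm{int}\,M'$, the boundary $\Sigma=\partial M'$ is disjoint from $K$, so $\sigma\subset \Sigma\subset M\setminus K$, which is the desired curve.

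There is essentially no obstacle beyond the mild technical point of arranging $K$ to lie in the interior of $M'$ rather than merely inside it; this is handled by one of the two routine thickening tricks above. All of the real content has already been done in Lemma~\ref{cut}, Corollary~\ref{exhaustion}, and Lemma~\ref{curve}, so the corollary is just a repackaging of Lemma~\ref{curve} in which the bounding surface is pushed out past $K$.
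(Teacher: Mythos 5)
Your proposal is correct and follows exactly the route the paper intends: the paper gives no detailed proof, only the remark that the corollary follows from Corollary~\ref{exhaustion} and Lemma~\ref{curve}, and your argument is the natural filling-in of that remark. The thickening step (applying Corollary~\ref{exhaustion} to a regular neighborhood of $K$, or passing one step further in the exhaustion) is exactly the right technical move to ensure $\partial M'$ avoids $K$, and then Lemma~\ref{curve} applied to $\Sigma=\partial M'$ produces the required $\sigma\subset\Sigma\subset M\setminus K$.
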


\subsection{Infinite connected sum} We introduce the notion of an infinite connected sum. 
\begin{figure}[H]
\centering{
\def\svgwidth{\columnwidth}
{\scalebox{0.8}{\input{graph.eps_tex}}}
\caption{}
\label{graph}
}
\end{figure} 

\begin{definition}\label{inf-sum}
Let $\mathscr{F}$ be a family of $3$-manifolds. 
A $3$-manifold $M$ is an \emph{infinite connected sum} of members in $\mathscr{F}$, if there is a locally finite graph $G$ satisfying that 
\begin{itemize}[leftmargin=15pt]
\item[(1)] Each vertex $v$ of $G$ corresponds to a copy $M_v$ of some manifold in $\mathscr{F}$;  

\item[(2)] For each vertex $v$, $Y_v$ is defined as $M_v$ with $d_v$ punctures, where $d_v$ represents the degree of $v$; 

\item[(3)] $M$ is diffeomorphic to the manifold obtained by the operation described in Figure \ref{graph}: for each edge $e$ in $G$, connecting vertices $v, v'$, choose two $2$-spheres $S\subset \p Y_v$, $S'\subset \p Y_{v'}$, and glue $Y_v$ and $Y_{v'}$ together along $S$ and $S'$ using an orientation-reversing diffeomorphism. 
\end{itemize}
%
%
%

\end{definition}

When $G$ is a finite tree, it is  the usual definition of the connected sum. When $G$ is a finite graph, we could transform the graph into a tree at the expense of adding  some factor $\s^1\times \s^2$ (see Figure \ref{loops}).

\begin{rem} (a) For any closed manifold $M$, $M\# (\mathbb{S}^1\times \mathbb{S}^2)$ can be written as the connected sum of $M$ with itself.

(b) The associated graph in Definition \ref{inf-sum} is not unique. For example, the manifold $M_1$ constructing from the graph $G_1$ is homeomorphic to $M_2$ from $G_2$, where $M_{v_0}$ is $\mathbb{S}_1\times \mathbb{S}_2$ (see Figure 3). 
\end{rem}

\begin{figure}[H]
\centering{
\def\svgwidth{\columnwidth}
{\scalebox{0.7}{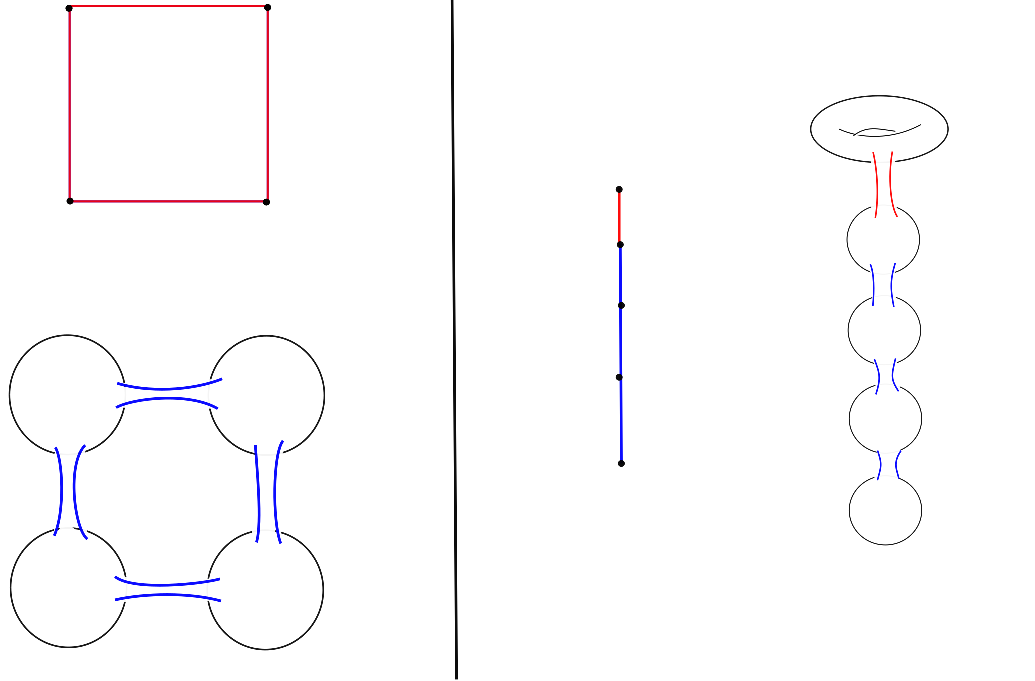}}
\caption{}
\label{loops}
}
\end{figure}

Some non-compact $3$-manifold  can be written as an infinite connected sum of some closed $3$-manifolds, for example, $\mathbb{R}^3$ is an infinite connected sum of  some copies of $\s^3$(see Figure 4).

\begin{figure}[H]
\centering{
\def\svgwidth{\columnwidth}
{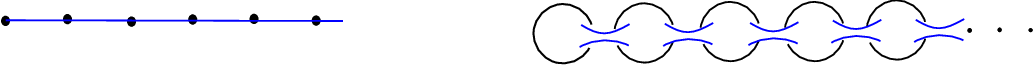}
\caption{}
\label{R3}
}
\end{figure}

\section{$\mu$-bubble and Prime decomposition}

Classically, any closed and oriented $3$-manifold can be written as a connected sum of some prime $3$-manifolds (see \cite{Kneser}). Our focus  will be  on the prime decomposition for complete $3$-manifolds with uniformly positive scalar curvature.

My main idea  is to find a family of $2$-spheres which cut the complete $3$-manifold into some pre-compact $3$-manifolds. Then, we apply the prime decomposition to each component and  get a required decomposition for the complete $3$-manifold. However, the decomposition is not unique. 

\vspace{3mm}

The existence of such a family of $2$-spheres comes from $\mu$-bubble theory.

\subsection{$\mu$-bubbles}   In this section, we recall the general existence and the stability theorem for $\mu$-bubbles. 

\begin{definition} Let $(M, g)$ be a connected, compact  manifold with boundary and $\p M:=\p_-M\amalg \p_+ M$  be a choice of labeling the components of $\p M$ so that neither of the sets are empty. \\
(1) The \emph{reduced boundary} $\partial^* \Omega $ of a Caccioppoli set $\Omega\subset M$ is equal to $\p \Omega \setminus (\partial_-M\amalg \partial_+ M)$; \\
(2) Let $\Omega_0\subset M$ be a Caccioppoli set with $\partial_+ M\subset \Omega_0$. Each smooth function $h$ on $\i M$ is equipped with a functional $\mathcal{A}_h$: 
\begin{equation}\label{def-bubble}\mathcal{A}_h(\Omega)=\mathcal{H}^2(\p^*\Omega)-\int_M(\chi_{\O}-\chi_{\O_0})h~d\mathcal{H}^3, \end{equation} where $\O$ is any Caccioppoli set  of  $M$ with $\O\Delta\O_0\Subset \text{Int}~M$
and $\chi_\O$ is the characteristic function of region $\O$. 
\end{definition}
Note that if $h=0$  and $\O$ is a critical point of $\mathcal{A}_h$, then $\p^*\O$ is minimal. 

\vspace{2mm}

The existence and regularity of a minimizer of $\mathcal{A}_h$ among all Caccioppoli sets is claimed by Gromov (see Section 5.1 of \cite{Gromov19} ) and rigorously carried out by Zhu (see Proposition 2.1 of  \cite{zhu} on Page 4). 

\begin{proposition}\label{exist}\textnormal{(See Proposition 2.1  \cite{zhu}, Page 4)} If the function $h$ satisfies 
\begin{equation}
\lim_{x\rightarrow \p_-M} h(x)=-\infty ~~\text{and}~~\lim_{x\rightarrow \p_+M} h(x)=+\infty, 
\end{equation} then there exists a smooth minimizer $\O$ for $\mathcal{A}_h$ such that $\O\Delta \O_0$ is compactly contained in $\text{Int}~M$.
\end{proposition}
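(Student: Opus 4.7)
The plan is the direct method of the calculus of variations: build a minimizing sequence, confine it \emph{a priori} to a compact subset of $\text{Int}\,M$ using the blow-up of $h$ at $\p M$, extract a BV-limit, and invoke standard regularity for prescribed-mean-curvature boundaries. Let $\{\O_i\}$ be a minimizing sequence of Caccioppoli sets with $\O_i \Delta \O_0 \Subset \text{Int}\,M$; since $\O_0$ is itself a valid competitor, $\mathcal{A}(\O_i) \leq \mathcal{A}(\O_0) + 1$ for all large $i$.

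The central step is a \emph{foliation barrier} argument. Near $\p_+M$, choose a smooth foliation $\{\Sigma_t^+\}_{0 \leq t < \delta}$ by equidistant surfaces from $\p_+M$, with mean curvatures $H(t)$ bounded uniformly in $t$ on compact subranges. Using $\lim_{t \to 0^+} h|_{\Sigma_t^+} = +\infty$, pick $t_*^+>0$ so small that $h > H(t) + 1$ on $\Sigma_t^+$ for every $0 < t \leq t_*^+$, and further shrink so that $N_+ := \{0 \leq t \leq t_*^+\} \subset \O_0$ modulo null sets. Then for any competitor $\O$, a cut-and-paste comparison between $\O$ and $\O \cup N_+$ --- using the divergence theorem applied to $\nabla t$ on $N_+ \setminus \O$ to bound the boundary change, and the strict inequality $h > H(t) + 1$ to dominate the volume change --- yields $\mathcal{A}(\O \cup N_+) \leq \mathcal{A}(\O)$, with strict inequality whenever $\mathrm{vol}(N_+ \setminus \O) > 0$. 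Consequently, we may replace $\O_i$ by $\O_i \cup N_+$ without increasing $\mathcal{A}$, after which $N_+ \subset \O_i$. The symmetric argument near $\p_-M$ produces a collar $N_-$ with $\O_i \cap N_- = \emptyset$. Thus $\p^*\O_i \subset K := \overline{M \setminus (N_+ \cup N_-)} \cap \text{Int}\,M$, a fixed compact set.

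Once confinement is secured, $\mathcal{H}^2(\p^*\O_i)$ is uniformly bounded by $\mathcal{A}(\O_i) + \sup_K|h| \cdot \mathrm{vol}(K)$. BV compactness extracts a subsequence with $\chi_{\O_i} \to \chi_\O$ in $L^1(M)$, and the confinement transfers to give $\O \Delta \O_0 \Subset \text{Int}\,M$. Lower semicontinuity of perimeter together with $L^1$-continuity of the linear volume term yield $\mathcal{A}(\O) \leq \liminf \mathcal{A}(\O_i) = \inf \mathcal{A}$, so $\O$ is a minimizer. The Euler--Lagrange equation is the prescribed mean curvature equation $H_{\p^*\O} = h$, and classical regularity theory for codimension-one perimeter minimizers with smooth forcing, combined with the absence of stable singular minimal cones in ambient dimension three, upgrades $\p^*\O$ to a smooth surface. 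The main obstacle will be the foliation barrier step, in particular the sign bookkeeping in the cut-and-paste inequality relating the divergence of $\nabla t$ to the foliation mean curvature $H(t)$ and its compatibility with the choice of $t_*^+$; everything else is a standard application of BV compactness and regularity.
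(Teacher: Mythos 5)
Your proof follows essentially the same route as the paper: both confine a minimizing sequence via a foliation-barrier argument near $\p_\pm M$ (with $h$ dominating the foliation's mean curvature, i.e. $\mathrm{div}(\eta)$, thanks to the blow-up of $h$), both establish the comparison $\mathcal{A}(\O\cup N_+)\leq\mathcal{A}(\O)$ by cut-and-paste plus the divergence theorem, and both then invoke BV compactness, lower semicontinuity, and standard regularity for perimeter minimizers with smooth forcing. The only cosmetic difference is ordering: the paper first uses the barrier to show $\inf\mathcal{A}>-\infty$ and then confines the minimizing sequence, whereas you apply the barrier replacement directly to the minimizing sequence; the underlying estimate is identical.
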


\begin{proof}Let $\mathcal{C}$ be the collection of Caccioppoli sets $\O$ in $\i~ M$ with  $\O\Delta \O_0\Subset \i~M$ and define that 
\[I:=\inf\{\mathcal{A}_h(\O)~| ~\O\in \mathcal{C}\}.\]
We first prove $I>-\infty$ and then show that a minimizing sequence converges to the required candidate. 

\vspace{2mm}

We now show that $I>-\infty$. For any $\tau>0$, let $\O^\tau_{\pm}$ be the distance-$\tau$ neighborhood of $\p_{\pm}M$. They are two families $\{\Omega^\tau_+\}$ and $\{M\setminus \O^\tau_-\}$ of Caccioppoli sets with $\O^\tau_+ \Delta \O_0\Subset M$ and $(M\setminus \O^\tau_-) \Delta \O_0\Subset M$. Their reduced boundaries $S^\tau_\pm:= \partial^* \O_\pm^\tau$ are smooth equidistant hypersurfaces to $\p_{\pm} M$ for sufficiently small $\tau$. 

Note that $\Omega^\tau_\pm$ is foliated by smooth surfaces $\{S^{\rho}_{\pm}\}_{\rho\in [0, \tau ]}$, where $S^\rho_\pm=\p^* \O^\rho_\pm$. The unit normal vector $\eta$ of $\{S^{\rho}_{\pm}\}_{\rho\in [0, \tau ]}$, pointing out $M$ along $\p_\pm M$,  is a smooth vector field on $\Omega^\tau_{\pm}$.  Thus, $|\text{div}(\eta)|$ is finite on $\Omega^\tau_\pm$. Choosing $\tau$ small enough, we find 
\begin{equation}\label{div+bdy1}
h>\text{div}(\eta) ~\text{in}~\O^\tau_{+} \end{equation}
\begin{equation}\label{div+bdy2}  h<-\text{div}(\eta) ~\text{in}~ \O^\tau_{-}. 
\end{equation}

Observe that for any Caccioppoli set $\O$ with $\partial_+M \subset \O$,  we have that  
$\partial^* \O \Delta \partial^*(\O\cup \O^\tau_+)$ is a disjoint union of $\partial \O^{\tau}_+\setminus \O$ and $\partial \O\setminus \O^\tau_+$. This observation and \eqref{div+bdy1} allow for computing the difference between $\mathcal{A}_h(\O^\tau_+\cup \O)$ and $\mathcal{A}_h(\O)$. 
\begin{equation}\label{pre-comp}
\begin{split}
\mathcal{A}_h(\O\cup\O^{\tau}_+)-\mathcal{A}_h(\O)&=\int_{\p^*(\O\cup\O^\tau_+)}d\mathcal{H}^2-\int_{\p^*\O}d\mathcal{H}^2-\int_{\O^\tau_+\setminus \O}h ~d\mathcal{H}^3\\
&=\int_{\p^*\O^\tau_+\setminus \O}d\mathcal{H}^2-\int_{\p^*\O\cap \O^\tau_{+}} d\mathcal{H}^2-\int_{\O^\tau_+\setminus \O}h~d\mathcal{H}^3, \\
& <\int_{\p^*\O^\tau_+\setminus \O}d\mathcal{H}^2-\int_{\p^*\O\cap \O^\tau_{+}} d\mathcal{H}^2-\int_{\O^\tau_+\setminus \O}\text{div}(\eta)~d\mathcal{H}^3\end{split}
\end{equation}
 Moreover, $\partial \O^\tau_+\setminus \O$ is a disjoint union of $\partial^* \O^\tau_+\setminus \O$ and $\partial^* \O\setminus \O^\tau_+$. Basic computations yield that 
\begin{equation}\label{div-comp}
\begin{split}
\int_{\O^{\tau}_+\setminus \O}\text{div}(\eta)&=\int_{\p^*\O^\tau_+\setminus \O} \eta \cdot \nu ~d \mathcal{H}^2-\int_{\p*\O\cap \O^\tau_+} \eta\cdot \nu ~d\mathcal{H}^2\\
&\geq \int_{\p^*\O^\tau_+\setminus \O} d\mathcal{H}^2-\int_{\p^*\O\cap \O^\tau_+} d\mathcal{H}^2, 
\end{split}
\end{equation}
where $\nu$ is the unit normal vector and $\eta\cdot \nu=1$ on $\p^{*}\O^{\tau}_+\setminus \O$. Combining \eqref{pre-comp} with \eqref{div-comp}, we have that 
 \begin{equation}\label{rep1}\mathcal{A}_h(\O\cup\O^{\tau}_+)-\mathcal{A}_h(\O)\leq 0\end{equation}

Using  \eqref{div+bdy2} and the divergence theorem, we  analogously calculate the difference between $\mathcal{A}_h(\O)$ and $\mathcal{A}_h(\O\setminus \O^\tau_-)$ and get that \begin{equation}\label{rep2}\mathcal{A}_h(\O\setminus \O^\tau_-)<\mathcal{A}_h(\O).  
\end{equation} 
We note that the proof of \eqref{rep2} only depends on \eqref{div+bdy2} and the divergence theorem. Using the set difference instead of the set union does not cause any issue. 

From \eqref{rep1} and \eqref{rep2}, we find that 
$
\mathcal{A}_h(\O)>\mathcal{A}_h((\O\cup\O^\tau_+)>\mathcal{A}_h((\O\cup\O^\tau_+)\setminus \O^\tau_-)
$. Then,  observe that  $h$ is bounded by some constant $C$ on $M\setminus (\O^{\tau}_+\cup\O^\tau_-)$, which implies that  
$$\mathcal{A}((\O\cup\O^\tau_+)\setminus \O^\tau_-)> -\int_{M\setminus (\O^{\tau}_+\cup\O^\tau_-)} |h| d\mathcal{H}^3>-C \mathcal{H}^3(M).$$
Thus, $I>-\infty$. 

\vspace{2mm}

Take a sequence $\{\O_k\}\subset \mathcal{C}$ with $\mathcal{A}_h(\O_k)\rightarrow I$ as $k\rightarrow \infty$. Note that by using \eqref{rep1} and \eqref{rep2}, we find $\{\mathcal{A}_h((\O_k\cup \O^\tau_+)\setminus \O^\tau_-)\}_k$ is also an minimizing sequence for $\tau$ sufficiently small. We may assume that 
\begin{equation}
\O_k\Delta \O_0 \subset M\setminus (\O^\tau_+\cup\O^\tau_-). 
\end{equation} Choosing  $k$ large enough, $\mathcal{A}_h(\O_k)$ is bounded by $I+1$. Combining with the bound of $h$ on $M\setminus (\O^{\tau}_+\cup\O^\tau_-)$, we get  that 
$$\mathcal{H}^2(\p^*\O_k)\leq I+1 +C \mathcal{H}^3(M)$$
By BV-compactness, taking a subsequence, the sets $\O_k$ converge to a Caccioppoli set $\hat{\O}$. It follows that $\hat{\O}$ is a minimizer of $\mathcal{A}_h$ and thus has smooth boundary from the standard regularity theory \cite{Italo}. \end{proof}

\subsection{First and Second variations}
\begin{lemma}\label{first-var} If $\O_t$ is a smooth 1-parameter family of regions with $\O_t|_{t=0}=\O$ and normal speed $\phi$ at $t=0$, then 
\begin{equation}
\frac{d}{dt}\mathcal{A}_h(\O_t)\Big|_{t=0}=\int_{\Sigma_0} (H-h)\phi ~d\mathcal{H}^2
\end{equation}where $\Sigma_0:=\p^*\O$ and  $H$ is the mean curvature of $\Sigma_0$. In particular, a critical point $\O$ of $\mathcal{A}_h$ satisfies 
\begin{equation}
H=h
\end{equation} along $\S_0$. 

\begin{proof} The reduced boundary $\Sigma_t:=\partial^* \Omega_t$ gives a variation  $F: (-\epsilon, \epsilon)\times \Sigma_0\rightarrow M$ with $F(t, \Sigma_0)=\Sigma_t$. The variational field is $F_t(0, x)$ and its normal component $F^N_t(0, x)=\phi\n_{\Sigma_0}(x)$. The first variation of area functional shows 
\begin{equation}\label{1v1t}
\begin{split}
\frac{d}{dt} \int_{\Sigma_t} d\mathcal{H}^2\Big|_{t=0}&=\int_{\Sigma_0} H\phi ~d\mathcal{H}^2+\int_{\Sigma_0} \text{div}_{\Sigma_0} (F^T_t(0, x))~d\mathcal{H}^2,\\
&=\int_{\Sigma_0} H\phi~d\mathcal{H}^2,
\end{split} \end{equation}where $\text{div}_{\Sigma_0}$ is the divergence operation of the induced metric on $\Sigma_0$ and $F^T_t(0, x)$ is the tangential component of $F_t(0, x)$. The second equation comes from Stokes' theorem. The variation of the second term in \eqref{def-bubble} is 
 \begin{equation}\label{1v2t}\frac{d}{dt}\int_{\O_t}h~d\mathcal{H}^3\Big|_{t=0}=\int_{\Sigma_0}h\phi~d\mathcal{H}^2. \end{equation}Thus, we have that $\frac{d}{dt}\mathcal{A}_h(\O_t)\Big|_{t=0}=\int_{\Sigma_0} (H-h)\phi ~d\mathcal{H}^2$\end{proof}

\end{lemma}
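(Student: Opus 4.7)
The plan is to split $\mathcal{A}(\Omega_t)$ into its perimeter piece $\mathcal{H}^2(\Sigma_t)$ and its bulk piece $-\int_M(\chi_{\Omega_t}-\chi_{\Omega_0})\,h\,d\mathcal{H}^3$, and differentiate each at $t=0$ separately. The admissibility convention from Proposition \ref{exist} forces $\Omega_t\Delta\Omega_0\Subset\i~M$, so every variation is compactly supported away from $\p M$; I may therefore parametrize the moving reduced boundary by a smooth $F:(-\epsilon,\epsilon)\times\Sigma_0\to M$ with $F(0,\cdot)=\mathrm{id}$ and decompose its variational field as $\partial_t F|_{t=0} = \phi\,\n + X^T$, where $\phi$ is the prescribed normal speed and $X^T$ is tangent to $\Sigma_0$.

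For the perimeter part I would invoke the classical first variation of area,
\begin{equation*}
\frac{d}{dt}\mathcal{H}^2(\Sigma_t)\Big|_{t=0} = \int_{\Sigma_0}H\phi\,d\mathcal{H}^2 + \int_{\Sigma_0}\mathrm{div}_{\Sigma_0}(X^T)\,d\mathcal{H}^2,
\end{equation*}
and kill the tangential integral by Stokes' theorem, using that $X^T$ vanishes near any trace of $\Sigma_0$ on $\p M$. For the bulk term, viewing the symmetric difference $\Omega_t\Delta\Omega_0$ as an infinitesimal signed collar around $\Sigma_0$ of normal thickness $\phi t+o(t)$ (equivalently, applying the coarea formula in a signed distance coordinate near $\Sigma_0$) yields
\begin{equation*}
\frac{d}{dt}\int_M(\chi_{\Omega_t}-\chi_{\Omega_0})\,h\,d\mathcal{H}^3\Big|_{t=0} = \int_{\Sigma_0}h\phi\,d\mathcal{H}^2.
\end{equation*}
Subtracting produces the advertised identity $\frac{d}{dt}\mathcal{A}(\Omega_t)\big|_{t=0}=\int_{\Sigma_0}(H-h)\phi\,d\mathcal{H}^2$, after which the Euler--Lagrange conclusion $H\equiv h$ along $\Sigma_0$ follows from the fundamental lemma of the calculus of variations applied to arbitrary $\phi\in C^\infty_c(\Sigma_0)$.

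No step is genuinely difficult since the first variations of area and of a bulk integral against a characteristic function are each standard. The only point requiring care is justifying that no boundary contributions from $\p M$ enter either Stokes' theorem (for the tangential area term) or the coarea computation (for the bulk term); this is automatic from the admissibility condition $\Omega_t\Delta\Omega_0\Subset\i~M$, which is built into the class $\mathcal{C}$ underlying the variational principle.
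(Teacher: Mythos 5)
Your proposal is correct and follows essentially the same route as the paper: split $\mathcal{A}$ into the perimeter term and the bulk term, apply the classical first variation of area and kill the tangential divergence by Stokes, and compute the bulk derivative as $\int_{\Sigma_0}h\phi\,d\mathcal{H}^2$ before subtracting. The only cosmetic difference is that you make explicit (via the signed-collar/coarea picture and the admissibility constraint $\Omega_t\Delta\Omega_0\Subset\i~M$) why no boundary contributions arise, which the paper leaves implicit.
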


\begin{lemma}\label{second-var} Let $\O$ be a minimizer of $\mathcal{A}_h$ with the reduced boundary $\S_0:=\p^*\O$. Then,  for
 any smooth function $\phi$ on $\Sigma_0$, we have that 
\begin{equation}\label{sec}\int_{\Sigma_0}|\nabla_{\Sigma_0} \phi |^2+ K_{\Sigma_0} \phi^2-(\frac{1}{2}|A|^2+\kappa_M+\frac{h^2}{2}+\langle\nabla_M h, \n_{\Sigma_0}\rangle)\phi^2~ d\mathcal{H}^2\geq 0
\end{equation}
where $|A|^2$ is the squared norm of the second fundamental form of $\Sigma_0$, $\kappa_M$ is the scalar curvature of $(M, g)$ and  $K_{\S_0}$ is the sectional curvature of the induced metric on $\S_0$.\end{lemma}

\begin{proof} Let $\Omega_t$ and $F$ be assumed as in the proof of Lemma \ref{first-var}. For our convenience, we may assume that $F$ is a normal variation on $\Sigma_0$:  \[F_t (0, x)=\phi\n_{\Sigma_0}. \]  The second variational formula of the area functional states that 
\begin{equation*}\begin{split}
\frac{d^2}{dt^2}\int_{\Sigma_t} d\mathcal{H}^2\Big{|}_{t=0}&=\int_{\Sigma_0} |\nabla_{\Sigma_0} \phi|^2-(\text{Ric}(\n_{\Sigma_0})+|A|^2)\phi^2+H^2\phi^2 d\mathcal{H}^2 +\int_{\Sigma_0} \text{div}_{\Sigma_0} (F_{tt}(0, x)) d\mathcal{H}^2,\\
&=\int_{\Sigma_0} |\nabla_{\Sigma_0} \phi|^2-(\text{Ric}(\n_{\Sigma_0})+|A|^2)\phi^2+H^2\phi^2 d\mathcal{H}^2.
\end{split}
\end{equation*}
where $H$ is the mean curvature of $\Sigma_0$. The last equation follows from Stokes' theorem, since  that $F_{tt}(0, x)$ belongs to $T_x  \Sigma_0$, where $x\in \Sigma_0$. Differentiating Equation \eqref{1v2t} implies that 

\begin{equation}\label{2v2t}
\frac{d^2}{dt^2}\int_{\Omega_t}h~ d \mathcal{H}^3\Big{|}_{t=0}=\int_{\Sigma_0} \langle\nabla_M h, \n_{\Sigma_0}\rangle\phi^2 +h H\phi^2 ~d\mathcal{H}^2.\end{equation}
Observe that the minimizing property of $\mathcal{A}_h(\O)$ implies that the mean curvature $H$ of ${\Sigma_0}$ is $h|_{\Sigma_0}$ (by  Lemma \ref{first-var}) and  $\frac{d^2}{dt^2}\mathcal{A}_h(\Omega_t)|_{t=0}$ is non-negative. This  observation produces  that 
\begin{equation*}\begin{split}
\frac{d^2}{dt^2} \mathcal{A}_h(\Omega_t)\big{|}_{t=0}&=\int_{\Sigma_0}|\nabla_{\Sigma_0} \phi|^2 -(\text{Ric}(\n_{\Sigma_0})+|A|^2)\phi^2 - \langle\nabla_M h, \n_{\Sigma_0}\rangle\phi^2 ~d\mathcal{H}^2\\
&\geq 0.
\end{split}
\end{equation*}
W use the key observation given by Schoen and Yau  (see Page 165, \cite{SY1}) 
\begin{equation}\label{SY-OB}\text{Ric}_M(\n_{\S_0})=\kappa_M-K_{\S_0}+\frac{H^2}{2}-\frac{1}{2} |A|^2, \end{equation}
to plug  into the above inequality
\[\int_{\Sigma_0}|\nabla_{\Sigma_0} \phi |^2+ K_{\Sigma_0} \phi^2-(\frac{1}{2}|A|^2+\kappa_M+\frac{h^2}{2}+\langle\nabla_M h, \n_{\Sigma_0}\rangle)\phi^2~ d\mathcal{H}^2\geq 0
\]\end{proof}

\subsection{Free boundary $\mu$-bubble} Consider a Riemannian $3$-manifold  $(M, g)$ with boundary $\p M:=\p_+M\amalg \p_-M\amalg\p_0 M$, where $\p_{\pm}M$ are non-empty smooth submanifolds of $M$.

In what follows, we  assume that $\p_0 M$ is minimal for $g$.  This prevents tangential contact between  free boundary $\mu$-bubble and $\p_0 M$ in the usual way that free boundary minimal surface does not make tangential contact with mean convex components of the boundary. 

\begin{definition}
(1) The \emph{reduced boundary} $\partial^* \Omega $ of a Caccioppoli set $\Omega\subset M$ is equal to $\p \Omega \setminus (\partial_-M\amalg \partial_+ M)$; \\
(2) Let $\Omega_0\subset M$ be a Caccioppoli set with $\partial_+ M\subset \Omega_0$. Each smooth function $h$ on $\i M\cup \partial_0 M$ is equipped with a free boundary functional $\mathcal{A}^*_h$: 
\begin{equation}\mathcal{A}^*_h(\Omega)=\mathcal{H}^2(\p^*\Omega)-\int_M(\chi_{\O}-\chi_{\O_0})h~d\mathcal{H}^3, \end{equation} where $\O$ is any Caccioppoli set  of  $M$ with $\O\Delta\O_0\Subset \text{Int}~M\amalg \p_0 M$. 
\end{definition}

We note that if $h=0$, the reduced boundary $\partial^* \Omega$ of a minimizer $\Omega$ for $\mathcal{A}^*_h$ is  a free boundary minimal surface. By similar arguments to Proposition \ref{exist}, we can conclude that 

\begin{proposition}\label{var-free} Let $h$ be a smooth function on $\i M\cup \p_0 M$. If it satisfies that 
\begin{equation*}\lim_{x\rightarrow \p_+ M} h(x)=+ \infty;\\
 \lim_{x\rightarrow \p_- M} h(x)=- \infty, \end{equation*}
 then there exists $\O$ with the smooth reduced boundary $\p^* \O$ minimizing $\mathcal{A}^*_h$ among such regions. 
 \end{proposition}
 
%
%

 \begin{lemma}\label{free-var1} Let $M$ and $\mathcal{A}^*_h$ be defined as above and $\Omega$ a critical point of $\mathcal{A}^*_h$. Then  the mean curvature of the reduced boundary $\S: =\p^* \O$ of $\O$ is  
   \[H=h|_\Sigma,\] 
   and   $\Sigma$ meets $\p_0M$ orthogonally. \end{lemma}
 
 \begin{proof}Let $\Omega_t$ be a smooth 1-parameter family of regions with $\Omega_t \Big{|}_{t=0}=\Omega$ and $F: (-\epsilon, \epsilon)\times \p^*\O\rightarrow M$ be a variation of $\Sigma$, with $F(t, \p^*\O)=\p^*\O_t$.  We remark that $\partial \Sigma$ may be a subset of $\p_0 M$. For our convenience, we assume 
 \begin{equation*}
\begin{cases}
F_t(0, x)\in T_x(\p_0 M)\quad \quad  \text{ for } x\in \partial \Sigma\\

F^N_t(0, x)=\phi(x)\n_\S(x) \quad \text{on}\quad x\in \Sigma, 
\end{cases}
\end{equation*}where $\phi$ is a smooth function on $\Sigma$ and $F^T_t(0, x)$ and $F^N_t(0,x)$ are the tangential and normal component of $F_t(0, x)$ respectively. 
 The first variational formula for the area functional gives that 
 \begin{equation*}
\begin{split}
\frac{d}{dt} \int_{\Sigma_t} d\mathcal{H}^2\Big|_{t=0}&=\int_{\Sigma} H\phi ~d\mathcal{H}^2+\int_{\Sigma} \text{div}_{\Sigma} (F^T_t(0, x))~d\mathcal{H}^2,\\
&=\int_{\Sigma} H\phi~d\mathcal{H}^2+ \int_{\p \Sigma}\langle F^{T}_t(0, x), \overset{\rightarrow}{\alpha}\rangle d\mathcal{H}^1 \quad \quad \quad \text{By Stokes's theorem}\\
&=\int_{\Sigma} H\phi~d\mathcal{H}^2+ \int_{\p \Sigma}\langle F_t(0,x), \overset{\rightarrow}{\alpha}\rangle\phi d\mathcal{H}^1\quad\quad \quad \text{since}\langle F^N_t(0,x), \overset{\rightarrow}{\alpha}\rangle=0.
\end{split} 
\end{equation*}
where $\overset{\rightarrow}{\alpha}$  is  the outward normal vector of $\p\S$ in $\S$.
Combining with  \eqref{1v2t} , we can have that \[\frac{d}{dt}\mathcal{A}(\O_t)\Big|_{t=0}=\int_{\Sigma} (H-h)\phi ~d\mathcal{H}^2+\int_{\p \Sigma}\langle F_t(0,x), \overset{\rightarrow}{\alpha}\rangle\phi d\mathcal{H}^1.\]
Since  $\frac{d}{dt}\mathcal{A}(\O_t)\Big|_{t=0}=0$ for any $\phi$, we observe that the  mean curvature $H$ of  $\Sigma$ is $h|_{\Sigma}$ and  $\langle F_t(0,x), \overset{\rightarrow}{\alpha}\rangle=0$ on $\p \S$ for any variation $F$. We note that $F_t(0, x)$ can be chosen as any vector in $T_x(\p_0 M)$, which implies that  $\Sigma$ meets $\p_0M$ orthogonally.  
  \end{proof}
 \begin{lemma}\label{free-var2}Let  $\O$ be a minimizer of $\mathcal{A}^*_h$ and $\S:=\p^*\O$ be its reduced boundary. Then,   for any $\phi\in C^1(\S)$, one has that 
\begin{equation}\label{2var-free}\begin{split}
\int_{\Sigma}&(|\nabla_{\Sigma}\phi |^2+ K_\S \phi^2)~d\mathcal{H}^2+\int_{\p \S}k_g\phi^2~d\mathcal{H}^1
 -\frac{1}{2}\int_\S |A|^2\phi^2 d\mathcal{H}^2\\
&-\int_\S(\frac{h^2}{2} +\langle\nabla_M h, \n_{\Sigma}\rangle+\kappa_M)\phi^2~ d\mathcal{H}^2 \geq 0,
\end{split}
\end{equation}where $k_g$ is the geodesic curvature of $\p \S$ as a curve in  $\S$. 
\end{lemma}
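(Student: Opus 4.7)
The plan is to mimic the proof of Lemma \ref{second-var} (the closed case), with two new ingredients: a boundary term in the second variation of area that is forced by the constraint $\partial \Sigma_t\subset \partial_0 M$, and the use of $H_{\partial_0 M}=0$ to convert that boundary term into geodesic curvature of $\partial\Sigma$.

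First I would choose a one-parameter family $\Omega_t$ of Caccioppoli sets with $\Omega_0=\Omega$, realized by a variation $F:(-\epsilon,\epsilon)\times \partial^*\Omega\to M$ that fixes the components of $\partial^*\Omega$ other than $\Sigma$, has $F^N_t(0,\cdot)=\phi\,\overset{\rightarrow}{\mathbf n}_\Sigma$ on $\Sigma$, and satisfies the admissibility constraint $F(t,x)\in \partial_0 M$ for $x\in\partial\Sigma$. This last constraint is possible because $\Sigma$ meets $\partial_0 M$ orthogonally by Lemma \ref{free-var1}, so $\overset{\rightarrow}{\mathbf n}_\Sigma\in T(\partial_0 M)$ along $\partial\Sigma$. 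The same computation that gave Equation \eqref{2v1t} now produces, via Stokes' theorem on a surface with boundary,
\begin{equation*}
\frac{d^2}{dt^2}\int_{\Sigma_t} d\mathcal{H}^2\Big|_{t=0}=\int_{\Sigma}\bigl(|\nabla_{\Sigma}\phi|^2-(\text{Ric}(\overset{\rightarrow}{\mathbf n}_{\Sigma})+|A|^2)\phi^2+H^2\phi^2\bigr)\,d\mathcal{H}^2+\int_{\partial\Sigma}\langle F_{tt}(0,x),\overset{\rightarrow}{\beta}\rangle\,d\mathcal{H}^1.
\end{equation*}

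Next I would evaluate the boundary integrand. Since the curve $t\mapsto F(t,x)$ lies inside $\partial_0 M$ for $x\in\partial\Sigma$, its acceleration in $M$ splits into a component tangent to $\partial_0 M$ and the normal component $\Pi_{\partial_0 M}\!\bigl(F_t(0,x),F_t(0,x)\bigr)\,\overset{\rightarrow}{\beta}$, where $\Pi_{\partial_0 M}$ is the second fundamental form of $\partial_0 M$ in $M$ and we use $\overset{\rightarrow}\beta\perp\partial_0 M$ (again from the orthogonal contact). Using $F_t(0,x)=\phi\,\overset{\rightarrow}{\mathbf n}_\Sigma$ this gives $\langle F_{tt}(0,x),\overset{\rightarrow}\beta\rangle=\Pi_{\partial_0 M}(\overset{\rightarrow}{\mathbf n}_\Sigma,\overset{\rightarrow}{\mathbf n}_\Sigma)\phi^2$. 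The minimality assumption \eqref{mean-assumption} says $\Pi_{\partial_0 M}(\overset{\rightarrow}{\mathbf n}_\Sigma,\overset{\rightarrow}{\mathbf n}_\Sigma)+\Pi_{\partial_0 M}(\overset{\rightarrow}{\alpha},\overset{\rightarrow}{\alpha})=0$, and since $\overset{\rightarrow}\beta$ is tangent to $\Sigma$ with $\overset{\rightarrow}\alpha\in T(\partial_0 M)$, one has $\Pi_{\partial_0 M}(\overset{\rightarrow}{\alpha},\overset{\rightarrow}{\alpha})=\langle \nabla_{\overset{\rightarrow}\alpha}\overset{\rightarrow}\alpha,\overset{\rightarrow}\beta\rangle=k_g$ (with the sign conventions used in the statement). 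Hence the boundary contribution is exactly $-\int_{\partial\Sigma} k_g\,\phi^2\,d\mathcal{H}^1$.

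For the volume term, the same calculation as in Equation \eqref{2v2t} gives $\frac{d^2}{dt^2}\int_{\Omega_t}h\,d\mathcal{H}^3|_{t=0}=\int_\Sigma \bigl(\langle \nabla_M h,\overset{\rightarrow}{\mathbf n}_\Sigma\rangle+hH\bigr)\phi^2\,d\mathcal{H}^2$, there being no boundary term since the variation does not move $\partial^*\Omega$ off the bulk region in a way that contributes. Combining, using $H=h|_\Sigma$ from Lemma \ref{free-var1}, and invoking $\frac{d^2}{dt^2}\mathcal{A}(\Omega_t)|_{t=0}\ge 0$, yields
\begin{equation*}
\int_\Sigma\bigl(|\nabla_\Sigma\phi|^2-(\text{Ric}(\overset{\rightarrow}{\mathbf n}_\Sigma)+|A|^2)\phi^2-\langle\nabla_M h,\overset{\rightarrow}{\mathbf n}_\Sigma\rangle\phi^2\bigr)d\mathcal{H}^2+\int_{\partial\Sigma} k_g\phi^2\,d\mathcal{H}^1\ge 0.
\end{equation*}
Finally, substituting the Gauss-equation identity from Remark \ref{2-var}, $\text{Ric}(\overset{\rightarrow}{\mathbf n}_\Sigma)+|A|^2=\kappa_M-K_\Sigma+\tfrac12 h^2+\tfrac12|A|^2$, and rearranging produces the claimed inequality \eqref{2var-free}.

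The main obstacle is the careful bookkeeping in the middle step: identifying $\langle F_{tt}(0,x),\overset{\rightarrow}\beta\rangle$ with $\Pi_{\partial_0 M}(\overset{\rightarrow}{\mathbf n}_\Sigma,\overset{\rightarrow}{\mathbf n}_\Sigma)\phi^2$ (using the admissibility of the variation), and then trading that term for $-k_g\,\phi^2$ using the minimality of $\partial_0 M$. Everything else is a direct, boundary-aware repetition of the calculations already done in Lemmas \ref{first-var} and \ref{second-var}.
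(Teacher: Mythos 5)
Your approach is essentially the one the paper takes: produce an admissible normal variation (using the orthogonal contact from Lemma \ref{free-var1} to keep $\partial\Sigma_t$ in $\partial_0 M$), compute the second variation of area on a surface with boundary so that Stokes' theorem leaves a boundary integrand $\langle F_{tt}(0,x),\overset{\rightarrow}\beta\rangle$, identify this with the second fundamental form of $\partial_0 M$ in the direction $\overset{\rightarrow}{\mathbf n}_\Sigma$, and use $H_{\partial_0 M}=0$ to trade it for the geodesic curvature. The paper does the same conversion, just writing out the identity $\langle\nabla_{\overset{\rightarrow}{\mathbf n}_\Sigma}\overset{\rightarrow}{\mathbf n}_\Sigma,\overset{\rightarrow}\beta\rangle=-\langle\nabla_{\overset{\rightarrow}\alpha}\overset{\rightarrow}\alpha,\overset{\rightarrow}\beta\rangle=k_g$ instead of invoking the second fundamental form by name.

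However, there is a sign inconsistency in the middle of your argument. You assert $\Pi_{\partial_0 M}(\overset{\rightarrow}\alpha,\overset{\rightarrow}\alpha)=\langle\nabla_{\overset{\rightarrow}\alpha}\overset{\rightarrow}\alpha,\overset{\rightarrow}\beta\rangle=k_g$ and conclude that the boundary contribution is $-\int_{\partial\Sigma}k_g\phi^2\,d\mathcal{H}^1$, yet the inequality you then display has $+\int_{\partial\Sigma}k_g\phi^2\,d\mathcal{H}^1$; these two statements cannot both hold. The error is in the identification of $k_g$: since $\overset{\rightarrow}\beta$ is the \emph{outward} conormal of $\partial\Sigma$ in $\Sigma$, the geodesic curvature in the convention used by the paper (and required for Gauss--Bonnet to give $\chi(\Sigma)=\frac{1}{2\pi}(\int_\Sigma K+\int_{\partial\Sigma}k_g)$ later in Corollary \ref{top}) is $k_g=-\langle\nabla_{\overset{\rightarrow}\alpha}\overset{\rightarrow}\alpha,\overset{\rightarrow}\beta\rangle$. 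With that sign, minimality of $\partial_0 M$ gives $\langle\nabla_{\overset{\rightarrow}{\mathbf n}_\Sigma}\overset{\rightarrow}{\mathbf n}_\Sigma,\overset{\rightarrow}\beta\rangle=-\langle\nabla_{\overset{\rightarrow}\alpha}\overset{\rightarrow}\alpha,\overset{\rightarrow}\beta\rangle=+k_g$, so the boundary contribution is $+\int_{\partial\Sigma}k_g\phi^2\,d\mathcal{H}^1$, and the rest of your rearrangement via Remark \ref{2-var} then produces \eqref{2var-free} correctly. So the fix is just to reverse the sign at the step where you identify $\Pi_{\partial_0 M}(\overset{\rightarrow}\alpha,\overset{\rightarrow}\alpha)$ with $k_g$; everything else is sound.
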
 
\begin{proof} Let $F$ be a variation defined in the proof of Lemma \ref{free-var1} with $F_t(x, 0)=\phi(x)\n_{\S}(x)$ on $\S$. Basic computations yield that 
\begin{equation*}\begin{split}
\frac{d^2}{dt^2}\int_{\Sigma_t} d\mathcal{H}^2\Big{|}_{t=0}&=\int_{\Sigma} \{|\nabla_{\Sigma} \phi|^2-(\text{Ric}(\n_\Sigma)+|A|^2)\phi^2+H^2\phi^2 d\mathcal{H}^2 +\int_{\Sigma} \text{div}_\Sigma (F_{tt}(0, x)) d\mathcal{H}^2,\\
&=\int_{\Sigma} |\nabla_{\Sigma} \phi|^2-(\text{Ric}(\n_\Sigma)+|A|^2)\phi^2+H^2\phi^2 d\mathcal{H}^2 + \int_{\p \S}\langle F_{tt}(0,x), \overset{\rightarrow}{\alpha} \rangle  d\mathcal{H}^1\\
&=\int_{\Sigma} |\nabla_{\Sigma} \phi|^2-(\text{Ric}(\n_\Sigma)+|A|^2)\phi^2+H^2\phi^2 d\mathcal{H}^2 + \int_{\p \S}\langle \nabla_{\n_\S}\n_\S, \overset{\rightarrow}{\alpha} \rangle \phi^2 d\mathcal{H}^1
\end{split}
\end{equation*}
 where $\overset{\rightarrow}{\alpha}$ is the unit outward normal vector of $\p \Sigma$ in $\Sigma$.

 Let $\overset{\rightarrow}{\beta}$ be the unit tangential vector of $\p \S$. We observe that by  Lemma \ref{free-var1}, we find that for $x\in \p \S$, $\overset{\rightarrow}{\alpha}(x)$ is the unit normal vector of $\p_0 M$  and  $T_x(\p_0 M)$ can be spanned by $\overset{\rightarrow}{\beta}(x)$ and $\n_\S$. We may write that along $\p \S$
 \[H(x)=-\langle \overset{\rightarrow}{\alpha}, \nabla_{\overset{\rightarrow}{\beta}}\overset{\rightarrow}{\beta}\rangle -\langle \overset{\rightarrow}{\alpha}, \nabla_{\n_\S} {\n_\S}\rangle=0\text{ and }
 k_g=-\langle \nabla_{\overset{\rightarrow}{\beta}}\overset{\rightarrow}{\beta}, \overset{\rightarrow}{\alpha}\rangle \]
Thus,  the boundary term of the above second variation  can be rewritten: 
\begin{equation}\label{bdy-geo}\int_{\p \S}\langle \nabla_{\n_\S}\n_\S, \overset{\rightarrow}{\alpha} \rangle \phi^2 d\mathcal{H}^1=\int_{\p \S}k_g \phi^2 d\mathcal{H}^1. \end{equation}

Since $\O$ is a minimizer of $\mathcal{A}^*_h$, we have that  $H=h$ on $\Sigma$ and $\frac{d^2}{dt^2}\mathcal{A}^*_{h}(\Omega_t)|_{t=0}\geq 0$. Combining with \eqref{bdy-geo} \eqref{2v2t} and \eqref{SY-OB}, we get that 
\[\begin{split}
\frac{d^2}{dt^2} \mathcal{A}^*_h(\Omega_t)\big{|}_{t=0}=&\int_{\Sigma}|\nabla_{\Sigma} \phi|^2 -(\text{Ric}(\n_\Sigma)+|A|^2)\phi^2 - \langle\nabla_M h, \n_{\Sigma}\rangle\phi^2 ~d\mathcal{H}^2\\
&+\int_{\p\Sigma} k_g \phi^2~d\mathcal{H}^1 \quad \quad \quad\quad \quad \quad \quad \quad \quad \quad \quad \quad \text{ from } \eqref{bdy-geo} \text{ and } \eqref{2v2t}\\
&=\int_{\Sigma}(|\nabla_{\Sigma} \phi|^2 +K_\S \phi^2)~d \mathcal{H}^2+\int_{\p \S} k_g \phi^2 ~d \mathcal{H}^1- \int_{\S} \frac{1}{2} |A|^2\phi^2~d \mathcal{H}^2 \\
&-\int_\S(\kappa_M+h^2 + \langle\nabla_M h, \n_{\Sigma}\rangle)\phi^2 ~d\mathcal{H}^2\quad \quad \quad \text{ by } \eqref{SY-OB}\\
&\geq 0.
\end{split}
\]
\end{proof}
\subsection{Topology of stable $\mu$-surfaces and the Separation Lemma}
Let $(M, g)$ be a Riemannian manifold whose boundary $\p M:=\p_+M \cup \p_-M\cup \p_0M $ is a choice of labeling the components of $\p M$ such that (a) $\p_\pm M$ are non-empty; (b) if $\p_0 M$ is non-empty, it is minimal for $g$. 

\begin{remark}\label{choice-h} If $d(\p_+ M, \p_- M)> 4\pi$, then there is a smooth function $h$ on $\i M\cup \p_0 M$ satisfying that
\[\lim_{x\rightarrow \p_\pm M}h(x)=\pm \infty \text{ and } 1+\frac{1}{2}h^2- |\nabla h|>0. \] 
The construction is as follows: We begin with  two  smooth  functions $\tau_1, \tau_2 $:
\[\begin{cases}
\tau_1(0)=0  \quad \text{and} \quad\tau_1(t)>0 \text{ for } t>0,\\
\tau_1 \text{ is non-decreasing }\\
\tau_1(t)=4\pi \text{ for } t>4\pi+\epsilon\\
|\text{Lip} (\tau_1)|<1
\end{cases}\begin{cases}
\tau_2(0)=4\pi \text{ and } \tau_2(t)<4\pi \text{ for } t>0\\
\tau_2 \text{ is non-increasing}\\
\tau_2(t)=0 \text{ for } t\geq 4\pi+\epsilon\\
|\text{Lip} (\tau_2)|<1 
\end{cases}
\] where $\epsilon:=d(\p_+M, \p_-M)-4\pi$. Consider the functions $d_+(x):=d(x, \p_+ M)$, $d_-(x):=d(x, \p_-M)$ and define $$\rho(x)=(1-\epsilon)\tau_1(d_-(x))+\epsilon \tau_2(d_+(x)). $$
Then, we have that (a)$\rho|_{\p_-M}=0$ and $\rho|_{\p_+M}=4\pi$,  (b) $0<\rho(x)<4\pi$ for $x\in \i M\cup\p_0M$ and $|\text{Lip}(\rho)|<1$.  We choose $h$ as follows: 
   \begin{equation}\label{def-h}h:=\tan (\frac{\rho(x)-2\pi}{4}),\end{equation}
   Basic computations easily check the properties of $h$. 
\end{remark}

\begin{corollary}\label{top1}Let $(M^3, g)$ be an oriented and compact manifold with 
\begin{itemize}
\item[(a)]  $\p M=\p_-M\cup \p_+ M$ and with $d(\p_+ M, \p_- M)>4\pi$
\item[(b)]   the scalar curvature $\kappa_M\geq 1$
\end{itemize}
and 
$h$ be a function defined in Remark \ref{choice-h}. If $\O$ is a minimizer of $\mathcal{A}_h$, then each component of the reduced boundary $\p^* \O$ is a 2-sphere. 
\end{corollary}

\begin{proof} The choice of $h$ (see Remark \ref{choice-h}) and the lower bound of $\kappa_M$ show  \[\kappa_M+\frac{h^2}{2}-|\nabla h|>0 \]
For our convenience, we may assume that $\S:=\p^*\O$  is connected. We use \eqref{sec} to have that for any smooth function $\phi$ on $\Sigma$, 
\[\int_{\Sigma}|\nabla_{\Sigma} \phi |^2+ K_\Sigma \phi^2\geq \int_M \frac{1}{2}|A|^2\phi^2d\mathcal{H}^2+\int_M(\kappa_M+\frac{h^2}{2}+\langle\nabla_M h, \n_{\Sigma}\rangle)\phi^2~ d\mathcal{H}^2> 0\]
Choosing  $\phi=1$ implies that  $\chi(\Sigma)=\int_\S K_{\S}d\mathcal{H}^2>0$ (i.e  $\S$ is a $2$-sphere or a $\mathbb{R}P^2$). Since $M$ is oriented and the normal bundle of $\Sigma$ is trivial, we can conclude that $\Sigma$ is a $2$-sphere. \end{proof}

\begin{corollary} \label{top}Let $(M^3, g)$ be an oriented and compact manifold satisfying 
 \begin{itemize}
\item[(1)]  $\p M=\p_-M\cup \p_+ M\cup\p_0 M$ and $\p_0 M$ is minimal for $g$;
\item[(2)]  $d(\p_+ M, \p_- M)>4\pi$;
\item[(3)]   the scalar curvature $\kappa_M\geq 1$
\end{itemize}
and 
$h$ be a function defined in Remark \ref{choice-h}.  If $\O$ is  a minimizer of $\mathcal{A}^*_h$, then each component of $\p^*\O$ is a $2$-sphere or a disc. 
\end{corollary}
\begin{proof}  For simplifying the proof, we may assume that $\S:=\p^* \O$ is a connected surface with boundary (Otherwise, we use the argument in the proof of Corollary \ref{top1} to show $\S$ is a $2$-sphere.)
We  use \eqref{2var-free} to obtain that for any smooth function $\phi$ on $\Sigma$
\[
\int_{\Sigma}|\nabla_{\Sigma}\phi |^2+ K_\S \phi^2~d\mathcal{H}^2+\int_{\p \S}k_g\phi^2~d\mathcal{H}^1
 \geq \frac{1}{2}\int_\S |A|^2 \phi^2d\mathcal{H}^2\\
+\int_\S(\frac{h^2}{2} +\langle\nabla_M h, \n_{\Sigma}\rangle+\kappa_M)\phi^2~ d\mathcal{H}^2 >0. 
\]
Choosing $\phi=1$ shows  $\chi(\Sigma)=\int_\S K_\S ~d\mathcal{H}^2+\int_{\p \S}k_g >0$. Combining with the facts that $M$ is oriented and $\Sigma$ has a trivial normal bundle, we  can conclude that $\S$ is a disc. 
\end{proof}

We use  Corollary \ref{top} to have the following separation lemma. 

\begin{lemma}\label{sep}(The separation lemma) Let $(M, g)$ be a compact and orientable $3$-manifold satisfying that 
\begin{itemize}
\item[(a)] $\p M=\p_+ M\amalg \p_-M\amalg \p_0 M$; 
\item[(b)] $d(\p_- M, \p_+ M)> 4\pi$;
\item[(c)] $\p_0 M$ is minimal for $g$ and a disjoint union of $2$-spheres. 
\end{itemize}
Then, if the scalar curvature $\kappa_M\geq 1$,  there is  a finite collection $\{S_i\}_i$ of embedded $2$-spheres in $\i~M$ which satisfies the following conditions
\begin{itemize}
\item $\{S_i\}_i$ are disjoint;
\item  $M\setminus \amalg_iS_i$ has no component intersecting both   $\p_-M$  and $\p_+ M$. 
\end{itemize}
\end{lemma}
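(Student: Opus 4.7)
The plan is to apply the free-boundary $\mu$-bubble construction of Section 3.3 to $(M,g)$, use Corollary \ref{top}(2) to classify the components of the resulting minimizing surface, and then modify the disk components into disjoint embedded $2$-spheres in $\i M$.

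First I set up the functional and produce a minimizer. Since $d(\p_-M,\p_+M)>4\pi$, as in the opening of Section 3.4 I construct $\tau\in C^\infty(M)$ with $\tau=0$ on $\p_-M$, $\tau=4\pi$ on $\p_+M$, and $|\nabla\tau|\le 1$, and set $h:=\tan\bigl((\tau-2\pi)/4\bigr)$ as in Remark \ref{choice-h}, so that $h\to\pm\infty$ on $\p_\pm M$ and $\kappa_M+h^2/2-|\nabla h|>0$ on $\i M$. With a Caccioppoli set $\O_0$ chosen so that $\p_+M\subset \O_0$ and $\p_-M\cap \O_0=\emptyset$, and the free-boundary functional $\mathcal{A}$ of Section 3.3, Proposition \ref{var-free} yields a minimizer $\O$ with smooth reduced boundary $\Sigma:=\p^*\O\subset \i M\cup \p_0 M$. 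The a priori area bound from the proof of Proposition \ref{exist}, combined with compactness of $M$ and a monotonicity lower bound for components of a surface with uniformly bounded mean curvature, forces $\Sigma$ to have finitely many components. By Corollary \ref{top}(2), each is either a $2$-sphere lying in $\i M$ or a disk meeting some $2$-sphere component $S\subset\p_0 M$ orthogonally along its boundary circle.

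The key step is to convert each disk component into an embedded $2$-sphere in $\i M$ while preserving disjointness. Fix a component $S$ of $\p_0M$: the boundary circles of all disks sitting on $S$ are finitely many disjoint simple closed curves, and hence partition $S$ into regions with a tree-like nesting pattern. For each disk $D_j$ with $\p D_j\subset S$, the circle $\p D_j$ bounds two disks in $S$; I choose a capping disk $\Delta_j$ compatibly with the nesting so that any two $\Delta_j,\Delta_{j'}$ are either disjoint or one contains the other in $S$. Working in a collar $\p_0M\times[0,\epsilon)$ of $\p_0M$ in $M$ and pushing each piecewise-smooth sphere $D_j\cup\Delta_j$ off $\p_0M$ into $\i M$ by pairwise distinct small amounts (smoothing the corner along $\p D_j$), I produce smoothly embedded, pairwise disjoint $2$-spheres $\tilde S_j\subset\i M$. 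Together with the $2$-sphere components of $\Sigma$, these form the desired finite family $\{S_i\}$.

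It remains to verify the separation property. Let $\tilde\O$ be the region obtained from $\O$ by adjoining, or deleting, the thin collar pockets bounded between each $D_j$ and its push-off $\tilde S_j$, the side being dictated by the capping choice. Then $\p_+M\subset\tilde\O$, $\p_-M\cap\tilde\O=\emptyset$, and $\p\tilde\O\cap\i M=\bigsqcup_i S_i$, so every component of $M\setminus\bigsqcup_i S_i$ lies entirely in $\tilde\O$ or in $M\setminus\overline{\tilde\O}$; in particular no such component meets both $\p_-M$ and $\p_+M$. The main obstacle is the modification step: choosing capping disks and push-off heights so that the resulting spheres are embedded and pairwise disjoint in the presence of nontrivial nesting of boundary circles on components of $\p_0M$.
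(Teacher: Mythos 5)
Your proposal is correct and follows essentially the same route as the paper: construct the weight $h$ from a Lipschitz ramp between $\p_-M$ and $\p_+M$, apply the free-boundary $\mu$-bubble existence result (Proposition \ref{var-free}) to produce a minimizer $\O$ whose reduced boundary separates $\p_+M$ from $\p_-M$, invoke Corollary \ref{top}(2) to see each component is a $2$-sphere in $\i M$ or a disk meeting $\p_0M$ orthogonally, and then cap each disk with a disk in the corresponding boundary sphere and push off into a collar. The paper states the capping/deformation step more tersely ("if necessary, we deform $\{S'_i\}_i$ to get a disjoint collection"), whereas you make the nesting-compatible choice of capping disks and the separation check via the modified region $\tilde\O$ explicit — a useful elaboration but not a different argument.
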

\begin{proof} Choose the function $h$ as in Remark \ref{choice-h}  and $\O_0=\{x~|~ d(x, \p_-M)\leq 2\pi\}\subset M$. By Proposition \ref{var-free}, there is a minimizer $\O$ for the functional $\mathcal{A}^*_h$ with $\O_0 \Delta \O\Subset \i M\cup \p_0 M$. We note that the complement of the reduced boundary $\p^*\O:=\amalg_i \Sigma_i$ has no component intersecting both $\p_+M $ and $\p_-M$. 

By Corollary  \ref{top}, we have that  each $\S_i$ is a disc or a $2$-sphere. The desired spheres, $\{S'_i\}_i$, will be obtained from surgeries on $\S_i$. More precisely, we define that 
\begin{itemize}
\item If $\Sigma_i$ is a sphere, we define $S'_i$ as $\Sigma_i$. 
\item If $\Sigma_i$ is a disc with $\p \Sigma_i\subset \p_0 M$,  we let $S'_i$ denote $\S_i\cup_{\p \S_i}D_i$, where $D_i\subset \p_0 M $ is an embedded disc with $\p \Sigma_i$. 
\end{itemize}The existence of $D_i$ is ensured by  the fact that $\p_0 M$ is a disjoint union of $2$-spheres.

\vspace{1mm}

We remark that since $D_i$ may be a subset of  $ D_{i'}$ for some $i'$, $S'_i$ may intersect $S'_{i'}$. If necessary, we deform $\{S'_i\}_i$ to be a disjoint collection $\{S_i\}_i$ of $2$-spheres  in $\i~ M$. They also  hold  that $M\setminus \amalg_i S_i$ has no component intersecting both $\p_+ M$ and $\p_- M$. 
\end{proof}
\subsection{Prime decomposition} We now use $\mu$-bubbles to complete the proof for Theorem \ref{C}. 
\begin{proof} Let $(M, g)$ be a complete orientable  open  $3$-manifold with scalar curvature $\kappa \geq 1$. Fix a point $x\in M$ and $\rho (\bullet) :=d(\bullet, x)$.  For our convenience, we may  assume that $\rho^{-1}(8\pi k)$ is  a smooth surface for each $k\in\mathbf{Z}_+$. They decompose $M$ into some compact manifolds $\{Y_k\}^\infty_{k=0}$ \[\begin{cases}
Y_k:=\rho^{-1}([8\pi k, 8\pi(k+1)])\\
 \p_+Y_k:=\rho^{-1}(8\pi k)\\ 
 \p_- Y_k:=\rho^{-1}(8\pi(k+1)).\end{cases}\]  Then, we have that for $k>0$, $\p Y_k=\p_-Y_k\amalg \p_+Y_k$ and $d(\p_- Y_k, \p_+ Y_k)>4\pi$. 
 
 \vspace{2mm}

 Lemma \ref{sep} allows to find  a finite collection $\{S_i\}_{i\in I_k}$ of disjoint $2$-spheres with the separation property:  
\begin{center} $Y_k\setminus \amalg_{i\in I_k}S_i$ has no component intersecting both $\p_+ Y_k$ and $\p_- Y_k$.  \quad  $\textbf{(S)}$ \end{center}
Then, we study the topological properties of all these spheres: 
 \begin{itemize}
\item the family, $\{S_i\}_{i\in  I}$, is locally finite, where $I:=\amalg_{k\in \mathbb{Z}_+}I_k$  (i.e. any compact set intersects (at most) finitely many elements in $\{S_i\}_{i\in I}$); 
\item the family $\{S_i\}_{i\in I}$ cuts $M$ into some connected $3$-manifolds, $\{X_l\}$;
\item the boundary of $X_l$ consists of $2$-spheres \end{itemize}

We have that each $X_l$ is contained in $\rho^{-1}([8\pi k, 8\pi (k+2)])$ for some $k\in \mathbb{Z}$. If not, one of components of $Y_k\cap X_l$ is a component of $Y_k\setminus\amalg_{i\in I_k}S_i $ intersecting both $\p_-Y_k$ and $\p_+ Y_k$ for some $k$ and $l$, which leads to a contradiction with the  separation property (see $\textbf{(S)}$).

\vspace{2mm}

Let $\hat{X}_l$ be a closed $3$-manifold obtained from $X_l$ by gluing some $3$-balls  along $\p X_l$.  The prime decomposition (see Theorem 1.5 of \cite{HA} on Page 5) gives a finite collection $\{S_i\}_{i\in I'_l}$ of disjoint $2$-spheres so that each component of $\hat{X}_l\setminus \amalg_{i\in I'_l}S_i$ is homeomorphic to a closed  prime $3$-manifold with finitely many punctures. 

\vspace{2mm}

We may assume that each $S_i$ is contained in $\i~X_l$ for each $i\in I'_l$. For our convenience,  we write $\{S_e\}_{e\in E}$, $\{M_v\}_{v\in V}$ for $\{S_i\}_{i\in I} \amalg_{l}\{S_i\}_{i\in I'_l}$ and the components of $M\setminus \amalg_{e\in E}S_e $. Each $M_v$ is homeomorphic to a closed prime $3$-manifold with finitely many punctures. 

There is a locally finite graph $G:=(V, E)$ constructed as follows: 
\begin{itemize}[leftmargin=15pt, topsep=1pt ]
\item Two distinct vertices, $v, v'\in V$ are connected by the edge $e\in E$ if both $\p M_v$ and $\p M_{v'}$ contain the $2$-sphere $S_e$;
\item The edge $e\in E$ connects the vertex $v$ with itself, if $\p M_v$ has two components coming from $S_e$. 
\end{itemize}
Thus, $M$ is an infinite connected sum of some closed prime $3$-manifolds. 
\end{proof}
We use free boundary functional  to generalize the result to the manifold with boundary. 

\begin{corollary}\label{mean-prime}Let $(M, g)$ be an orientable $3$-manifold with scalar curvature $\kappa\geq 1$.  If $\p M$ is  a disjoint union of some $2$-spheres and minimal for $g$, then  $M$ is an infinite connected sum of some closed prime $3$-manifolds and $3$-balls. 
\end{corollary}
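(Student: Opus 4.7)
The plan is to mirror the proof of Theorem \ref{C}, replacing the usual separation lemma with its free-boundary version (Lemma \ref{sep}), whose $\p_0$-component is tailor-made for the minimal sphere boundary appearing in the hypothesis.

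\emph{Exhaustion and separation.} Fix an interior basepoint $x_0\in\i M$. Starting from $d(\cdot,x_0)$, I would construct a smooth proper function $\rho:M\to[0,\infty)$ with $|\nabla\rho|\le 1$, whose level sets $\rho^{-1}(8\pi k)$ ($k\in\mathbb{Z}_{>0}$) are smooth closed surfaces disjoint from $\p M$, and with each component of $\p M$ contained in the interior of some shell $Y_k:=\rho^{-1}([8\pi k,8\pi(k+1)])$. This is possible because $\p M$ is a locally finite disjoint union of compact $2$-spheres. For each $Y_k$, set $\p_+Y_k=\rho^{-1}(8\pi k)$, $\p_-Y_k=\rho^{-1}(8\pi(k+1))$, and $\p_0Y_k=Y_k\cap\p M$; then $\p_0 Y_k$ is a minimal disjoint union of $2$-spheres (by hypothesis) and $d(\p_+Y_k,\p_-Y_k)>4\pi$ since $|\nabla\rho|\le 1$. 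Lemma \ref{sep} supplies a finite disjoint family $\{S_i\}_{i\in I_k}$ of embedded $2$-spheres in $\i Y_k$ such that no component of $Y_k\setminus\bigsqcup_{i\in I_k}S_i$ meets both $\p_+Y_k$ and $\p_-Y_k$. The union over all $k$ is a locally finite collection of embedded $2$-spheres in $\i M$.

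\emph{Prime decomposition and the graph.} Cutting $M$ along this collection yields compact pieces $\{X_l\}$, each contained in at most two consecutive shells, with boundaries consisting of $2$-spheres (the internal cuts and possibly entire components of $\p M$). For each $X_l$, cap every boundary sphere by a $3$-ball to form a closed oriented $\hat X_l$ and apply the Kneser--Milnor decomposition (Theorem 1.5 of \cite{HA}) to obtain additional internal cutting $2$-spheres inside $\i X_l$. Finally, for every sphere $T\subset\p M$ that still lies on the boundary of some resulting piece, insert a further parallel $2$-sphere $T'$ just inside $M$, isolating a collar region $\cong\mathbb{S}^2\times[0,1]$ bounded by $T$ and $T'$. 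The total family of $2$-spheres, together with its incidence data, defines a locally finite graph $G$ as in Definition \ref{inf-sum}. Its vertices are of two types: closed prime $3$-manifolds (for pieces in $\i M$, each contributing $Y_v=M_v$ minus $d_v$ open balls, exactly as in the proof of Theorem \ref{C}), and $3$-balls (for each collar, represented as $M_v=B^3$ of degree $d_v=1$, so that $Y_v\cong\mathbb{S}^2\times[0,1]$ glues to its unique neighbor along $T'$ and retains $T$ as a component of $\p M$). This exhibits $M$ as the required infinite connected sum.

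\emph{Main obstacle.} The only genuinely delicate point is the construction of $\rho$ in the first step: one needs a smooth proper function with $|\nabla\rho|\le 1$ whose level sets entirely bypass every component of $\p M$. This is achievable precisely because $\p M$ is a locally finite union of compact $2$-spheres, but it requires a careful perturbation of the Lipschitz function $d(\cdot,x_0)$ that preserves both the gradient bound and the transversality of the level sets with $\p M$. Once this is arranged, the rest of the proof is a direct transcription of the argument for Theorem \ref{C}.
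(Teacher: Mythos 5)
Your overall plan is the one the paper has in mind: it offers no written proof of Corollary~\ref{mean-prime} beyond the remark that one should ``use the free boundary functional,'' and what you write out---shells $Y_k$ with $\p_0 Y_k = Y_k\cap\p M$, the free-boundary separation Lemma~\ref{sep}, Kneser--Milnor on the capped-off pieces, and a graph built from the cutting spheres---is exactly that program. Your collar trick for boundary components (insert a parallel $T'$, turn the collar into a degree-one $B^3$ vertex with $Y_v\cong\mathbb{S}^2\times[0,1]$) is the correct way to make $\p M$ survive in the infinite-connected-sum picture, since in Definition~\ref{inf-sum} every puncture of $Y_v$ must be glued, and only the intrinsic boundary of $M_v$ (here $\p B^3$) may be left free.

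There is, however, a genuine gap in the step you flag, and it is not merely a matter of care. You assert that one can find a proper $\rho$ with $|\nabla\rho|\le 1$ whose levels $\rho^{-1}(8\pi k)$ miss $\p M$, with each component of $\p M$ trapped strictly inside one shell, ``precisely because $\p M$ is a locally finite union of compact $2$-spheres.'' But a $1$-Lipschitz $\rho$ satisfies $\max_{S_j}\rho-\min_{S_j}\rho\le \operatorname{diam}_M(S_j)$, and the hypothesis only makes the $S_j$ minimal, not stable, so Proposition~\ref{dist} gives no a priori diameter bound; moreover even for small spheres the intervals $\rho(S_j)$ taken with $\rho=d(\cdot,x_0)$ may overlap and cover a full ray $[R,\infty)$, leaving no admissible cut values. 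So local finiteness alone does not deliver such a $\rho$, and a perturbation of $d(\cdot,x_0)$ cannot be expected to fix it. The remedy is to abandon level sets of a single Lipschitz function and build the shells adaptively: set $K_0=\{x_0\}$, and given a compact $K_k$ whose boundary in $\i M$ is a smooth closed surface and whose intersection with $\p M$ is a union of entire components of $\p M$, let $K_{k+1}$ be a compact smooth submanifold containing $B(K_k, 4\pi+\epsilon)$ together with every component of $\p M$ that it meets, iterating the absorption finitely often (which terminates by local finiteness on a compact set). Then $Y_k:=K_{k+1}\setminus\i K_k$ has $\p_\pm Y_k\subset\i M$ smooth and closed, $\p_0 Y_k=Y_k\cap\p M$ a union of entire minimal $2$-spheres, and $d(\p_+Y_k,\p_-Y_k)>4\pi$, so Lemma~\ref{sep} applies. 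With that substitution the remainder of your argument, including the cap-and-decompose step and the collar trick, goes through unchanged.
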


By Theorem 3.7 of  \cite{Bar-Hanke} or Proposition 1.4 of Carlotto and Li \cite{C-L}, it can be generalized to mean convex boundary.

\section{Minimal surfaces and their existence}

In this section, we talk about minimal surface theory and the existence  of minimal $2$-spheres and discs. 

\subsection{Minimal surfaces and a metric inequality} The geometries of minimal surfaces are constrained by the scalar curvature (see \cite{SY2} and \cite{GL}).  We begin with a metric inequality:

\begin{proposition}\label{dist}\textnormal{(See Theorem 2 of \cite{SY2}  on Page 576 and Theorem 1 of \cite{Rosen} on Page 228)} Let $(M^3, g)$ be a 3-manifold and $(\S, \partial \S)\subset (M, \partial M)$ a stable minimal surface. If the scalar curvature $\kappa\geq 1$, then one has that, for any point $x\in \S$, 
\begin{equation*} 
d(x, \partial \S)\leq \frac{2\pi}{\sqrt{3}}. 
\end{equation*}
where $d$ is the distance function on $(M, g)$. 
\end{proposition}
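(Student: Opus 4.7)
The plan is to combine the second-variation stability inequality with the Gauss equation, reducing the problem to a spectral estimate intrinsic to $\Sigma$, and then to exploit that estimate against a radial test function on a large geodesic ball.

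First, I would perform an algebraic manipulation: since $\Sigma$ is minimal ($H=0$) we have $\det(A)=-|A|^2/2$, so the Gauss equation yields $\text{Ric}_M(\n_\Sigma) = \kappa_M/2 - K_\Sigma - |A|^2/2$, as noted in Remark \ref{2-var}. Plugging this into Lemma \ref{second-var} (with $h \equiv 0$) and using $\kappa_M \geq 1$ rewrites stability as the purely intrinsic inequality
\begin{equation*}
\int_\Sigma (|\nabla_\Sigma \phi|^2 + K_\Sigma \phi^2)\, d\mathcal{H}^2 \;\geq\; \int_\Sigma \Big(\frac{\kappa_M}{2} + \frac{|A|^2}{2}\Big)\phi^2\, d\mathcal{H}^2 \;\geq\; \frac{1}{2}\int_\Sigma \phi^2\, d\mathcal{H}^2
\end{equation*}
for every $\phi \in C^1_c(\Sigma \setminus \partial\Sigma)$. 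This step reduces the original problem to showing that no intrinsic geodesic ball $B_R(x) \subset \Sigma \setminus \partial\Sigma$ with $R > 2\pi/\sqrt{3}$ can support the above spectral gap.

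Next, I would argue by contradiction. Suppose some $x \in \Sigma$ satisfies $d(x,\partial\Sigma) > 2\pi/\sqrt{3}$. Since every curve in $\Sigma$ is a curve in $M$, the intrinsic distance dominates the restriction of the ambient distance, so $d_\Sigma(x,\partial\Sigma) > 2\pi/\sqrt{3}$ as well, and an intrinsic geodesic ball $B_R(x) \subset \Sigma \setminus \partial\Sigma$ of radius $R > 2\pi/\sqrt{3}$ exists. I would then test the intrinsic inequality against a radial function $\phi(y) = F(d_\Sigma(y, x))$ with $F(R) = 0$ and $F'(0) = 0$. Using the coarea formula together with the Gauss--Bonnet identity $\int_{B_r} K_\Sigma\, d\mathcal{H}^2 = 2\pi - L'(r)$ (where $L(r) := \mathcal{H}^1(\partial B_r)$), the 2D inequality collapses to a weighted one-dimensional Sturm--Liouville bound on $[0,R]$. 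An appropriate optimization over $F$ — taking for instance $F(r) \propto \cos(\sqrt{3}\, r/2)$ modulated by the correct Jacobi-field weight for $L(r)$ — forces $R \leq 2\pi/\sqrt{3}$, contradicting the hypothesis.

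The hard part will be the optimization in the final step: converting the 2D inequality, which still carries the intrinsic Gauss curvature $K_\Sigma$, into a sharp 1D bound with the stated constant $2\pi/\sqrt{3}$. This is typically carried out either by a Fischer-Colbrie--Schoen type conformal change $\tilde g = u^2 g$ (with $u>0$ a first eigenfunction of $-\Delta_\Sigma + K_\Sigma - 1/2$) that absorbs $K_\Sigma$ into a nonnegative effective potential, or by a direct ODE comparison argument applied to the circumference function $L(r)$ built out of the stability inequality itself. Both strategies are implemented in detail by Schoen--Yau \cite{SY2} and by Rosenberg \cite{Rosen}, and yield exactly the stated constant $2\pi/\sqrt{3}$.
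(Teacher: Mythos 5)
The paper does not prove Proposition~\ref{dist}: it imports the estimate directly from Schoen--Yau \cite{SY2} and Rosenberg \cite{Rosen}, so there is no ``paper's own proof'' to compare against. Your sketch is a reasonable reconstruction of the argument in those references, and its first half is correct. Indeed, the Gauss-equation rewriting $\text{Ric}_M(\n_\Sigma)=\kappa_M/2-K_\Sigma-|A|^2/2$ for a minimal surface is right (note the paper's Remark~\ref{2-var} writes $\kappa_M$ where it should be $\kappa_M/2$; your version is the correct one), the resulting intrinsic stability inequality $\int_\Sigma(|\nabla_\Sigma\phi|^2+K_\Sigma\phi^2)\geq\tfrac12\int_\Sigma\phi^2$ for $\phi\in C^1_c(\Sigma\setminus\partial\Sigma)$ follows, and the observation that ambient distance bounds intrinsic distance is correctly used to produce an intrinsic geodesic ball avoiding $\partial\Sigma$.

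However, the proposal has a genuine gap at exactly the step that produces the stated constant. The passage from the $2$-dimensional inequality (with the unknown weight $L(r)$, the Gauss curvature term, the possible non-disk topology of $B_r$ past the injectivity radius, and the nonsmoothness of $L$ at the cut locus) to the bound $R\leq 2\pi/\sqrt3$ is the entire content of the theorem, and you explicitly leave it unexecuted, deferring to ``an appropriate optimization.'' Your candidate $F(r)\propto\cos(\sqrt3\,r/2)$ is not obviously right: it vanishes at $r=\pi/\sqrt3$, which is half the claimed bound, so as stated it could not yield the constant in the proposition. The mechanisms you name (Fischer-Colbrie--Schoen conformal change giving a positive lower bound on the conformal Gauss curvature followed by Bonnet--Myers, or an ODE comparison for $L(r)$ carried through the cut locus) are indeed the ones used in \cite{SY2,Rosen}, but neither is carried out and both have technical subtleties (the conformal factor is an eigenfunction, not a constant, so the pointwise curvature lower bound is not automatic; the ODE argument requires a distributional version of Gauss--Bonnet). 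Since the paper treats this proposition as a black box, your outline is consistent with the paper's level of detail, but as a self-contained proof it is incomplete precisely where the quantitative conclusion is extracted.
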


\begin{corollary}\label{dist1} Let $(M, g)$ be a $3$-manifold with boundary and $(\S, \p\S)\subset (M, S_1\amalg S_2)$  a connected stable minimal surface with $\p \S\cap S_j\neq\emptyset$ for $j=1,2$, where $S_1\amalg S_2\subset \p M$. If   the scalar curvature $\kappa\geq 1$, then $d(S_1, S_2)\leq \frac{4\pi}{\sqrt{3}}$.  
\end{corollary}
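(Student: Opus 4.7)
The plan is to reduce this to a direct application of Proposition \ref{dist} by locating an interior point of $\Sigma$ which is equidistant (in $M$) from $S_1$ and $S_2$.

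First I would consider the continuous function
\[
f : \Sigma \longrightarrow \mathbb{R}, \qquad f(x) = d_M(x, S_1) - d_M(x, S_2),
\]
where $d_M$ denotes the distance in the ambient manifold. On the non-empty set $\partial\Sigma \cap S_1$ we have $f \le 0$, while on $\partial\Sigma \cap S_2$ we have $f \ge 0$. Since $\Sigma$ is connected, the intermediate value theorem yields a point $x_0 \in \Sigma$ with
\[
d_M(x_0, S_1) = d_M(x_0, S_2) =: r.
\]

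Next I would invoke Proposition \ref{dist} at $x_0$: since $\Sigma$ is a stable minimal surface in a $3$-manifold with $\kappa \ge 1$,
\[
d_M(x_0, \partial\Sigma) \le \frac{2\pi}{\sqrt{3}}.
\]
Because $\partial\Sigma \subset S_1 \cup S_2$, some point $y \in \partial\Sigma$ realizes (up to an infimum) this bound, and $y$ lies in either $S_1$ or $S_2$. In either case one of the numbers $d_M(x_0, S_1)$, $d_M(x_0, S_2)$ is at most $\tfrac{2\pi}{\sqrt{3}}$, but these two numbers are both equal to $r$, so $r \le \tfrac{2\pi}{\sqrt{3}}$.

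Finally, the triangle inequality in $M$ gives
\[
d(S_1, S_2) \le d_M(x_0, S_1) + d_M(x_0, S_2) = 2r \le \frac{4\pi}{\sqrt{3}},
\]
which is the desired estimate. There is no serious obstacle here; the only point that requires a moment of care is the existence of the equidistant point $x_0$, which relies exactly on the assumed connectedness of $\Sigma$ together with $\partial\Sigma$ meeting both $S_1$ and $S_2$. If one of those hypotheses were dropped the argument would fail, since $f$ might have constant sign on $\Sigma$.
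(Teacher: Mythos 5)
Your proof is correct and complete. The paper states this as an immediate corollary of Proposition \ref{dist} without writing out an argument, and your reduction via an equidistant point is a clean way to supply one: continuity of $f(x)=d_M(x,S_1)-d_M(x,S_2)$ together with connectedness of $\Sigma$ and the hypothesis that $\partial\Sigma$ meets both $S_1$ and $S_2$ forces a zero of $f$, and the rest is Proposition \ref{dist} plus the triangle inequality. One small stylistic remark: instead of appealing to a point $y\in\partial\Sigma$ ``realizing (up to an infimum)'' the bound, you can avoid any discussion of whether the infimum is attained by simply observing that $\partial\Sigma\subset S_1\amalg S_2$ gives
\[
r=\min\bigl(d_M(x_0,S_1),\,d_M(x_0,S_2)\bigr)=d_M(x_0,S_1\amalg S_2)\le d_M(x_0,\partial\Sigma)\le \tfrac{2\pi}{\sqrt{3}},
\]
since the distance to a subset is never smaller than the distance to a superset. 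An equivalent packaging of the same idea, worth noticing, is a connectedness argument without naming $x_0$: the closed sets $U_j=\{x\in\Sigma:d_M(x,S_j)\le 2\pi/\sqrt{3}\}$ cover $\Sigma$ by Proposition \ref{dist}, each is nonempty because $\partial\Sigma\cap S_j\subset U_j$, so connectedness forces $U_1\cap U_2\neq\emptyset$, and any point there gives $d(S_1,S_2)\le 4\pi/\sqrt{3}$. Either version is fine.
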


\subsection{Embedded minimal disc and mean convex boundary} In the following, we will repeatedly use the geometric version of loop lemma. 
\begin{theorem}\label{disc}\textnormal{(See \cite{Meeks-Yau} and Theorem 6.28 of  \cite{CM1} on Page 224)} Let $(X^{3}, g)$ be a compact Riemannian 3-manifold whose boundary is mean convex and $\gamma$ a simple closed curve in $\partial X$ which is homotopically trivial  in $X$. Then, $\gamma$ bounds an area-minimizing disc and any such least area disc  is embedded.
\end{theorem}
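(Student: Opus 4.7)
The plan is to establish existence via the Plateau problem with the mean convex boundary serving as a barrier, and then prove embeddedness via a cut-and-paste argument in the universal cover following Meeks--Yau.

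First, I would set up the Plateau problem. Let $\mathcal{F}$ denote the space of $W^{1,2}$ maps $u:D^2\to X$ with $u|_{\partial D^2}$ a weakly monotone parametrization of $\gamma$; this is non-empty since $\gamma$ is null-homotopic in $X$. Minimize the Dirichlet energy over $\mathcal{F}$. By the standard Morrey/Courant theory of the Plateau problem one obtains a harmonic map $u:D^2\to X$ minimizing energy (hence area, after a conformal reparametrization) with boundary values $\gamma$. To handle the fact that $X$ has boundary, embed $X$ isometrically into a slightly larger open manifold $\widetilde X$ (which exists since $X$ is compact) and minimize there. The key step is to show that the minimizer actually lies in $X$: using the mean convexity of $\partial X$, one constructs a foliation of a collar neighborhood of $\partial X$ in $\widetilde X$ by surfaces of non-negative mean curvature, so that any interior tangency of the minimizing disc with such a leaf would violate the strong maximum principle for minimal surfaces. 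This forces $u(D^2)\subset X$ with $u(\textrm{Int } D^2)\cap\partial X=\emptyset$ (unless the disc is contained entirely in $\partial X$, which would only happen if $\gamma$ already bounded a disc there, a case handled trivially). Standard boundary regularity for the Plateau problem (since $\gamma$ is smooth and $\partial X$ is smooth) gives that $u$ is smooth up to $\partial D^2$. Thus one obtains a smooth branched immersed area-minimizing disc, which by Osserman and Gulliver's interior branch point analysis (and the analogous boundary results) is in fact an immersion.

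The heart of the proof is embeddedness. I would follow the tower construction of Meeks--Yau. Suppose toward contradiction that the least-area immersion $f:D^2\to X$ has self-intersections; the singular set is a 1-complex $\Sigma\subset f(D^2)$, and its preimage in $D^2$ consists of pairs of arcs and/or simple closed curves. Analyze the cases: if two subarcs $\alpha,\alpha'\subset D^2$ map to the same image arc, one can exchange the two sides of $\alpha$ and $\alpha'$ (the classical Meeks--Yau cut-and-paste) and then round off the corners along $\Sigma$ to strictly decrease area, contradicting minimality, unless the two sheets coincide locally. To rule out closed self-intersection loops, lift to the universal cover of $X\setminus \Sigma$ (or of a regular neighborhood) — a ``tower'' of covers ending in a simply connected piece where one applies Dehn's lemma to replace an immersed disc by an embedded one of no greater area, then descend back down. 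Each stage of the tower is finite by compactness and a standard area bound. In the end one produces an embedded disc $D'\subset X$ with $\partial D'=\gamma$ and area no larger than the minimum, so by the existence argument applied again this embedded $D'$ must itself be area-minimizing.

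The main obstacle is the tower/Dehn's lemma step for ruling out closed self-intersection curves: one has to verify that the cut-and-paste operations can be performed without increasing the topological complexity of the domain, so that the resulting map is still a disc and not, say, an annulus or a disc with handles. This is where the mean convex boundary is used again — it ensures that the intermediate surfaces in the tower can be taken to remain in the relevant lift of $X$, so that the final descent produces an embedded disc in $X$ itself and not merely in some cover. Standard references for the details are Meeks--Yau and the exposition in Colding--Minicozzi cited in the statement.
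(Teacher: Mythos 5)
The paper cites Theorem \ref{disc} from Meeks--Yau and Colding--Minicozzi and does not supply its own proof, so there is no internal argument to compare against; the relevant comparison is with the cited sources. Your sketch is a reasonable summary of that classical argument: existence by the Douglas--Rado Plateau solution together with the mean-convex barrier and strong maximum principle to confine the solution to $X$, removal of branch points by Osserman--Gulliver--Alt (interior) and Gulliver--Lesley (boundary), and embeddedness via exchange-and-round-off plus a Papakyriakopoulos-style tower, which is indeed the Meeks--Yau/Colding--Minicozzi route.

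Two points deserve more care. First, the conclusion of your embeddedness argument, as phrased, produces \emph{some} embedded least-area disc ``of area no larger than the minimum.'' The theorem asserts the stronger statement that \emph{every} least-area disc is embedded, and to get that you must run the argument as a genuine contradiction: any transverse or tangential self-intersection of a least-area immersed disc allows an exchange of sheets followed by smoothing that \emph{strictly} decreases area (this is where the absence of branch points is used, to guarantee the folding curve is a genuine corner whose rounding gains area). It is this strict decrease, not merely the construction of a competitor of equal area, that forces embeddedness of the original minimizer. Second, your description of the tower stage as lifting to ``the universal cover of $X\setminus\Sigma$'' is not quite the construction: the tower iterates two-sheeted covers of a regular neighborhood of the image in which the lifted boundary becomes nullhomologous, and terminates after finitely many steps by an Euler characteristic count, not by passing directly to a universal cover. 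These are sketch-level imprecisions rather than fatal flaws, and both are handled carefully in the Colding--Minicozzi exposition you cite.
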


The existence of minimal surfaces always requires  a geometric condition, mean convex boundary. We  introduce  a metric deformation to obtain mean convex boundary. 

\begin{lemma}\label{mean-convex-deform} Let $(X^3, g)$ be a compact $3$-manifold  with non-empty boundary. Then, for any $\epsilon>0$, there is a new metric $g_{\epsilon}$ so that 
\begin{itemize}
\item[(1)] $g_{\epsilon}$ is equal to $g$ on $X\setminus B(\p X, \epsilon)$;
\item[(2)] the mean curvature of $\p X$ is non-negative for $g_{\epsilon}$,  
\end{itemize} where $B(\p X, \epsilon)$ is the tubular neighborhood of $\p X$ with radius $\epsilon$.
\end{lemma}
\begin{proof}

Let $h(t)$ be a positive smooth function on $\mathbb{R}$ so that $h(t)=1$, for all $t\notin  [-\epsilon/2, \epsilon/2]$. Consider the function $f(x):=h(d(x, \partial X))$ and the metric $g_{\epsilon}:=f^{2}g$. In $(X, g_{\epsilon})$, the mean curvature $\hat{H}(x)$ of $\partial X$ is $$\hat{H}(x)=h^{-1}(0)(H(x)+2h'(0)h^{-1}(0))$$  
Choosing a function $h$ with $h(0)=2$ and $h'(0)>2\max_{x\in\partial X} |H(x)|+2$, we get the metric $g_{\epsilon}$ which is the required candidate in the assertion.\end{proof}

\subsection{Isotopy classes and Minimal surfaces}We  talk about how to use the isotopy class of $2$-spheres to find a minimal surface. 

\begin{definition}\label{isotopy} Let $\mathscr{C}_0$ be the collection of compact embedded surfaces such that each component is a $2$-sphere. Given $\Sigma\in \mathscr{C}_0$,  the  \emph{isotopy class} $\mathscr{I}^M(\S)$ of $\S$ in $M$ is the collection of all $\S'\in \mathscr{C}_0$ such that $\S'$ is  isotopic to $\S$ via a smooth isotopy $\phi_\bullet: [0,1]\times M\rightarrow M$, where $\phi_0=\textbf{Id}_M$ is the identity map  and each $\phi_t$ is a diffeomorphism of $M$. 

 When $M$ is non-compact, we also require that for an isotopy $\phi_\bullet$, there is a compact set $K\subset M$ such that $\phi_t |_{M\setminus K}=\textbf{Id}_{M\setminus K}$ is the identity map on $M\setminus K$ for each $t$.

\end{definition}

\begin{theorem}\label{mini-seq} \textnormal{(See Theorem 1 and Theorem 1' in \cite{MSY})}  Let $(M, g)$ be a compact orientable $3$-manifold with mean convex boundary and $\Sigma\subset M$ an embedded $2$-sphere. Consider an area-minimizing sequence $\{\Sigma_i\}^\infty_{i=1}$ in $\I^{M}(\S)$. If $\S$ is not homotopically trivial in $M$, then after passing to a subsequence, there are positive integers, $R$, $n_1$,$\cdots$, $n_R$ and pairwise disjoint stable minimal surfaces $\Sigma^{(1)}$, $\cdots$, $\Sigma^{(R)}$ such that 
\[\Sigma_k\rightarrow n_1\S^{(1)}+n_2\S^{(2)}+\cdots+n_R\S^{(R)}\]
in the sense of varifolds. Moreover, $g(\S^{(i)})=0$ for $1\leq i\leq R$. \end{theorem}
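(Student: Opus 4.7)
The plan is to carry out the Meeks-Simon-Yau isotopy-reduction scheme, adapted to respect the mean-convex boundary. First I would replace each term $\Sigma_i$ of the given area-minimizing sequence by an isotopic surface $\tilde\Sigma_i\in\I^M(\Sigma)$ of area no larger than $\mathcal{H}^2(\Sigma_i)$ which is $\gamma$-reduced: whenever an embedded disc $D\subset M$ with $\partial D\subset \tilde\Sigma_i$ and $\mathcal{H}^2(D)<\gamma$ allows a disc-compression surgery producing a lower-area isotopic surface, the surgery has already been performed. Since each such surgery strictly decreases area by a definite amount, the process terminates in finitely many steps; mean convexity of $\partial M$ is invoked at each step to push the surface away from $\partial M$ without leaving the isotopy class. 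The new sequence $\tilde\Sigma_i$ remains area-minimizing in $\I^M(\Sigma)$ but now enjoys a uniform lower area bound on small balls around any ``thick'' point.

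Next I would extract smooth convergence on a dense open set. On any small ball $B_r(x)\subset \i M$ where $\tilde\Sigma_i$ has bounded area density, I would further replace $\tilde\Sigma_i\cap B_r(x)$ by a minimizer of the area in the same relative isotopy class; the $\gamma$-reduction property guarantees that this changes the global area by a controlled amount and preserves the limit. The local minimizer is a stable minimal surface, and Schoen's curvature estimate, together with standard compactness of stable minimal surfaces, yields a subsequence converging in $C^{k,\alpha}$ on the complement of a finite set $\mathcal{S}\subset M$ of concentration points, as an integral varifold $V=\sum_{j=1}^R n_j[\Sigma^{(j)}]$ supported on pairwise disjoint smooth stable minimal surfaces.

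The hard part will be the removable-singularities step: showing that $\mathcal{S}=\emptyset$, so that the limit is globally smooth. I would attack this via blow-up: rescaling the $\tilde\Sigma_i$ around $p\in\mathcal{S}$ produces a stationary stable integral varifold in $\mathbb{R}^3$ of bounded density. By the Fischer-Colbrie-Schoen / do Carmo-Peng classification of complete stable minimal surfaces in $\mathbb{R}^3$, combined with the stability preserved under varifold convergence, the tangent varifold is a union of flat planes through the origin with integer multiplicity. Allard's regularity theorem then upgrades this to smooth multi-sheeted convergence in a genuine neighborhood of $p$, contradicting $p\in\mathcal{S}$. A standard covering argument removes all of $\mathcal{S}$.

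Finally, for $g(\Sigma^{(j)})=0$ and $R\geq 1$: each disc-compression surgery replaces a neighborhood of a simple closed curve by a pair of caps, hence preserves the property of being a disjoint union of $2$-spheres. Therefore every $\tilde\Sigma_i$ is a disjoint union of spheres, and the smooth components $\Sigma^{(j)}$ of the varifold limit must also be spheres by local smooth convergence with integer multiplicity. Non-contractibility of $\Sigma$ in $M$ forbids the zero varifold as a limit (otherwise $\tilde\Sigma_i$ would bound a region of vanishing volume in the sense of currents, making $\Sigma$ contractible), so $R\geq 1$ and each $n_j\geq 1$, yielding the asserted decomposition.
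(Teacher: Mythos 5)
The paper does not actually prove this theorem; it is quoted from Meeks--Simon--Yau (Theorems 1 and 1$'$ of \cite{MSY}), and the only addition the paper makes is the remark, citing Hass--Scott, that $\inf_i\text{Area}(\Sigma_i)>0$. Your outline follows the broad strategy of the MSY proof --- $\gamma$-reduction, local replacement by relative Plateau solutions, curvature estimates, and removable singularities via blow-up and Allard regularity --- so the method is on target. However, there are two genuine gaps.

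First, you conclude that the limiting supports $\Sigma^{(j)}$ ``must also be spheres by local smooth convergence with integer multiplicity.'' This is not correct: varifold convergence with integer multiplicity can collapse a family of embedded two-sided $2$-spheres onto a one-sided $\mathbb{RP}^2$ with even multiplicity (for instance, boundary spheres of shrinking tubular neighborhoods of an $\mathbb{RP}^2\subset\mathbb{RP}^3$, which is an orientable ambient manifold). The paper flags exactly this in Remark \ref{bdy}: each $\Sigma^{(i)}$ is either $\mathbb{S}^2$ or a one-sided $\mathbb{RP}^2$, and the projective-plane case has to be handled separately later (see also the definition of $S^{(i)}_k$ in Remark \ref{red}). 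The statement $g(\Sigma^{(i)})=0$ in the theorem is meant in the MSY sense that allows $\mathbb{RP}^2$, and your argument as written silently excludes it.

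Second, your justification of $R\ge 1$ is too loose. The inference ``otherwise $\tilde\Sigma_i$ would bound a region of vanishing volume, making $\Sigma$ contractible'' does not hold: a sequence of small disjoint spheres encloses vanishing volume but is not a priori isotopic into a ball, and there is no current-theoretic shortcut to contractibility here. What is actually needed is the positivity of the area infimum over $\mathscr{I}^M(\Sigma)$ for a non-contractible sphere, which is the Hass--Scott theorem invoked in the remark following Theorem \ref{mini-seq}. Either cite that result or supply the monotonicity-based containment argument; without it, the positivity of the limiting mass --- hence $R\ge 1$ and each $n_j\ge 1$ --- is unsupported.
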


\begin{remark}\label{bdy} The limit surfaces enjoy the following properties (see Pages 624-625 of \cite{MSY}). 

(a)  Each $\Sigma^{(i)}$ is  $\mathbb{S}^2$ or 1-sided $\mathbb{R}P^2$. Moreover,  if $\Sigma^{(i)}$ is 1-sided $\mathbb{R}P^2$, its multiplicity $n_i$ should be even. 

(b)   If $\S^{(i)}$ is $\mathbb{S}^2$, then $B(\S^{(i)},\epsilon)$  is $[0, 1]\times \S^{(i)}$  for $\epsilon$ sufficiently small; if $\S^{(i)}$ is 1-sided $\mathbb{R}P^2$ then $ B(\S^{(i)},\epsilon)$ is $\mathbb{R}P^3\setminus B^3$.

(c) If $\Sigma^{(i)}$ is 1-sided $\mathbb{R}P^2$, the boundary $\tilde{\S}^{(i)}$ of the closure of $(M\setminus \S^{(i)}, g|_{M\setminus \S^{(i)}})$, coming from $\Sigma^{(i)}$, is a stable minimal $2$-sphere.  Thus, the boundaries of $(M\setminus \coprod\limits^R_{i=1}\Sigma^{(i)}, g|_{M\setminus \coprod\limits^R_{i=1}\S^{(i)}})$, coming from $\amalg^R_{i=1}\Sigma^{(i)}$, are a disjoint union of stable minimal $2$-spheres.

\end{remark}

\subsection{ $\gamma$-Reduction} In this part, we study the minimal surfaces (introduced by Theorem \ref{mini-seq}) and its relationship with $\I^M(\S)$. Their relationship is clarified by the so-called $\gamma$-reduction. See the general definition in \cite{MSY}. 

\begin{definition}\label{reduction} (See Pages 628-629 in \cite{MSY})Let $\S_1$, $ \S_2$ be two surfaces whose components are $2$-spheres and $\gamma$  be a positive constant.  We say $\S_2$ is a \emph{$\gamma$-reduction} of $\S_1$, if the following conditions are satisfied: 
\begin{itemize}[leftmargin=15pt]
\item[(1)]$\S_1\setminus \S_2$ has closure $A$ diffeomorphic to the closed annulus;
\item[(2)] $\S_2\setminus \S_1$ has closure consisting of two disc, $D_1$ and $ D_2$;
\item[(3)] $\p A=\p D_1\cup \p D_2$ and   $\text{Area}(A)+\text{Area} (D_1)+\text{Area}( D_2)\leq \gamma$;
\item [(4)]the sphere $A\cup D_1\cup D_2$ bounds a $3$-ball $Y$ with $\i~ Y\cap (\S_1\cup\S_2)=\emptyset$;
\item[(5)] Consider the component $\S^*_1$ of $\S_1$ containing $A$. The set $\S^*_1\setminus A$ is not connected and each component  has area $\geq \delta^2/2$, where  $\delta$  only depends on  $(M, g)$.
\end{itemize}\end{definition}

We note that that since $\S^*_1$ is a sphere, each component of $\S^{*}_1\setminus A$ is a disc.

\begin{figure}[H]
\centering{
\def\svgwidth{\columnwidth}
{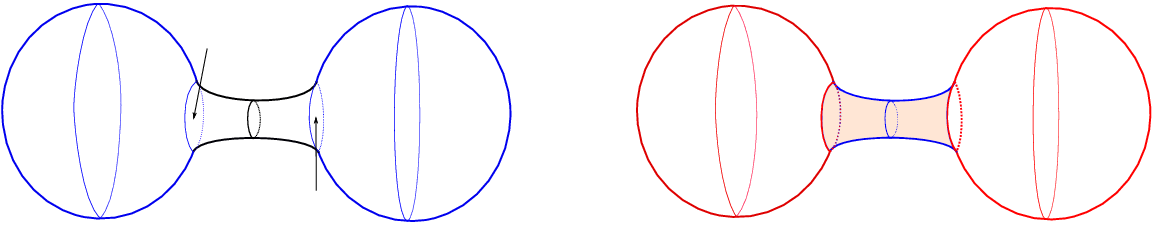}
\caption{A $\gamma$-reduction}
\label{reduce}
}
\end{figure}

\begin{remark}\label{red} (See Remark 3.27 of \cite{MSY} on Page 635) Let $\S_k$ and $\S^{(i)}$ be as in Theorem \ref{mini-seq}. For $k$ large enough and $1\leq i\leq R$, we define that 
\begin{equation*}\setlength{\abovedisplayskip}{1pt} \setlength{\belowdisplayskip}{1pt} S^{(i)}_k=\left\{  
\begin{array}{cc}
\amalg^{m_i}_{l=1} \p B(\Sigma^{(i)}, {l}/{k}), \quad\quad\quad\quad &\text{if}~n_i=2m_i~\text{is even}\quad\quad\\[3mm]
\Sigma^{(i)}\cup(\amalg^{m_i}_{l=1}\p B(\Sigma^{(i)}, {l}/{k})),\quad  &\text{if}~n_i=2m_i+1~\text{is odd}.
\end{array}
\right.\end{equation*}

There is a constant $\gamma$  depending on $(M, g)$, which satisfies the following conditions:   for $k$ large enough,  there is  a finite collection $\tilde{\Sigma}_k$ of disjoint $2$-spheres such that 

(1) it is obtained from $\S_k$ by a sequence of $\gamma$-reductions; 

(2) it is isotopic to $S_k:=\amalg^R_{i=0}S^{(i)}_k$ in $M$, where each sphere in $S^{(0)}_k$ is homotopically trivial in $M$. 
\end{remark}

\subsection{Irreducible manifolds and $\gamma$-reduction} In the following,  we let  $(M, g)$, $\S$, $\{\S_k\}$ and $\Sigma^{(i)}$ be assumed in Theorem \ref{mini-seq}. We now consider the case when $\Sigma$ (assumed as in Theorem \ref{mini-seq}) bounds a compact manifold $M_0\subset M$, homeomorphic to an irreducible $3$-manifold with a puncture.

\begin{definition}\label{bdy-fill} Let $X$ be a compact $3$-manifold whose boundary is a union of $2$-spheres. Its boundary filling  $\hat{X}$ is a closed 3-manifold obtained from $X$ by filling the boundary with some 3-balls. 

\end{definition}

\begin{lemma}\label{pun} Let $\S_1$ and $\S_2$ be two finite collections of disjoint $2$-spheres in a compact $3$-manifold $(M, g)$ (possibly with boundary), where $\S_2$ is a $\gamma$-reduction of $\S_1$. If $\S_1$ satisfies that 
\begin{itemize}[leftmargin=15pt]\item  $\S_1$ bounds a compact $3$-manifold $M_1\subset M$;
 \item the  boundary filling $\hat{M}_1$ of $M_1$ is  irreducible and $\pi_1(\hat{M}_1)\neq \{1\}$,
\end{itemize}
 then there is a compact component of $M\setminus \S_2$ whose boundary filling is homeomorphic to $\hat{M}_1$. 
\end{lemma}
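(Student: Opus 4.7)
My plan is to analyze the 3-ball $Y$ from Definition \ref{reduction} according to which side of $\Sigma^*_1$ it lies on, where $\Sigma^*_1$ is the component of $\Sigma_1$ containing the annulus $A$; write $S_i := E_i \cup D_i$ for $i = 1, 2$, where $E_1, E_2$ are the two discs of $\Sigma^*_1 \setminus A$. Since $\i Y$ is disjoint from $\Sigma_1 \cup \Sigma_2$, the open ball $\i Y$ lies entirely on a single side of $\Sigma^*_1$, so the argument splits into two cases: \emph{Case~I}, in which $\i Y \subset \i M_1$, and \emph{Case~II}, in which $\i Y \subset M \setminus M_1$. In each case I will identify a connected component of $M \setminus \Sigma_2$ whose closure, after capping its boundary spheres with 3-balls, recovers $\hat{M}_1$.

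In Case~II, the component of $M \setminus \Sigma_2$ containing $\i M_1$ also contains $\i Y$, because the annulus $A \subset \Sigma_1 \setminus \Sigma_2$ is not a barrier and allows passage across $\Sigma^*_1$ into $Y$. Its closure is therefore $M_1 \cup Y$, with topological boundary $(\Sigma_1 \setminus \Sigma^*_1) \cup S_1 \cup S_2$. Letting $B_1, B_2$ denote 3-balls capping $S_1, S_2$, the union $Y \cup B_1 \cup B_2$ is itself a 3-ball (two 3-balls glued to $Y$ along the discs $D_1, D_2$), and it meets $M_1$ precisely along $A \cup E_1 \cup E_2 = \Sigma^*_1$. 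Hence capping $S_1$ and $S_2$ in $M_1 \cup Y$ is equivalent to capping $\Sigma^*_1$ by a single 3-ball $B^*$; together with 3-balls capping the remaining components $\Sigma_1 \setminus \Sigma^*_1$, this produces $\hat{M}_1$.

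In Case~I, let $\ov{N}$ be the closure of the union of components of $M \setminus \Sigma_2$ lying inside $M_1$. Inside $\hat{M}_1$ we have the decomposition $\hat{M}_1 = \ov{N} \cup_{S_1 \cup S_2} (Y \cup B^*)$, where $B^*$ is the 3-ball capping $\Sigma^*_1$; the piece $Y \cup B^*$ consists of two 3-balls glued along $A$ and is homeomorphic to $\mathbb{S}^2 \times [0,1]$. If $\ov{N}$ were connected, replacing this cylinder by two disjoint 3-balls (i.e.\ passing to the boundary filling $\widehat{\ov{N}}$) would insert a $1$-handle, yielding $\hat{M}_1 \cong \widehat{\ov{N}} \# (\mathbb{S}^2 \times \mathbb{S}^1)$, which contradicts irreducibility of $\hat{M}_1$. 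Hence $\ov{N}$ is disconnected: the spheres $S_1, S_2$ lie on distinct components $N_a, N_b$, and the cylinder $\mathbb{S}^2 \times [0,1]$ joins $N_a$ to $N_b$ through these boundary spheres, yielding $\hat{M}_1 \cong \hat{N}_a \# \hat{N}_b$ (with further summands from other components of $\ov{N}$ when $\Sigma_1 \setminus \Sigma^*_1 \neq \emptyset$). Irreducibility forces all but one of the summands to be $\mathbb{S}^3$, so some component $N_b$ of $\ov{N}$ satisfies $\hat{N}_b \cong \hat{M}_1$, and this $N_b$ is the required compact component of $M \setminus \Sigma_2$.

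The main obstacle is setting up $\ov{N}$ correctly: verifying that it and each of its connected components are genuine compact 3-manifolds whose boundaries consist exactly of spheres drawn from $S_1 \cup S_2 \cup (\Sigma_1 \setminus \Sigma^*_1)$, with no spurious 2-dimensional ``fin'' appearing along $\i A$, and carrying out the connect-sum bookkeeping when $\Sigma_1$ has several components so that the boundary spheres of $M_1$ are correctly distributed among the components of $\ov{N}$. Once these points are in place, the $\mathbb{S}^2 \times \mathbb{S}^1$ contradiction and the extraction via irreducibility of a single non-$\mathbb{S}^3$ summand are routine.
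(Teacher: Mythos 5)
Your proof is correct and follows essentially the same strategy as the paper's: both split into the two cases $\i Y \subset M_1$ and $\i Y \subset M\setminus M_1$, use the 3-ball $Y$ and its capping to reassemble $\hat M_1$, and invoke irreducibility of $\hat M_1$ to single out the component of $M\setminus\Sigma_2$ with boundary filling $\hat M_1$. Your Case~II matches the paper almost verbatim; in Case~I you argue slightly more explicitly via the identification $Y\cup B^*\cong \mathbb{S}^2\times[0,1]$ and the resulting decomposition $\hat M_1\cong \hat N_a\#\hat N_b$ (the paper compresses this to an appeal to uniqueness of the prime decomposition), and your parenthetical worry about ``further summands'' coming from $\Sigma_1\setminus\Sigma^*_1$ is unfounded---since $D_1\cup D_2$ are the only cuts inside $M_1$, the set $\ov N$ has at most two components, and the spheres $\Sigma_1\setminus\Sigma^*_1$ merely sit as boundary spheres of $N_a$ or $N_b$ rather than contributing extra pieces---but this does not affect the validity of the argument.
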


\begin{proof} Let $A$, $Y$, $D_1$, $D_2$ and $\Sigma^{*}_1$ be as in Definition \ref{reduction}.  We assume that $\S^{i}_2$ is the component of $\S_2$ containing $D_i$ for $i=1,2$. Thus, $\S_1\setminus \S^*_1=\S_2\setminus \{\S^{i}_2\}_{i=1,2}$.

\vspace{2mm}

We note that  $\i Y\cap \S_1=\emptyset$ and $\partial Y\cap \Sigma_1=A$ (see (1) in Definition \ref{reduction}). Then, we have three distinct cases:  (1) $Y\cap M_1=\emptyset$; (2) $Y\cap M_1=A$ and $\i Y\cap M_2=\emptyset$;  (3) $Y\subset M_1$.
 
 \vspace{2mm}

\noindent \textbf{Case I}: If $Y\cap  M_1=\emptyset$,  we have that $\partial M_1 \subset \Sigma_2$. The reason is as follows: 

By that $A\subset \partial Y$ (see (3) in Definition \ref{reduction}), we find that $A\cap \partial M_1=\emptyset$, which implies  that $\partial M_1 \cap \S_1^*$ is also empty. Using $\S_1\setminus \S^*_1\subset \Sigma_2$ (see (1) in Definition \ref{reduction}), we get that $\partial M_1 \subset \Sigma_2$.

Thus, we obtain that $M_1$ is one component of $M\setminus \S_2$, a required candidate in the statement. 

\vspace{2mm}

\noindent \textbf{Case II} : If $Y\cap M_1=A$ and $\i Y\cap M_2=\emptyset$, we have that $\partial (M_1\cup Y) \subset \partial \Sigma_2$. The reason is as follows: 

We recall that  $\Sigma^*_1$ is the component of $\S_1$ containing $A$ (see (5) in Definition \ref{reduction}).  Then $\Sigma^*_1\subset \partial M_1$ (i.e. $\p M_1= \Sigma^*_1\cup  (\p M_1\setminus \S^*_1)$).  Thus, we find that
$$\p (M_1\cup_A Y)= ((\Sigma^*_1\setminus A)\cup D_1 \cup D_2)\cup(\p M_1\setminus \S^{*}_1)=\Sigma^1_2\cup\Sigma^2_2 \cup (\partial M_1\setminus \S^*_1)\subset \Sigma_2$$
The first equality comes from (4) and (5) in Definition \ref{reduction} while  the second equality  follows from (2) in Definition \ref{reduction}. 

\vspace{2mm}

Thus, we have that $M_1\cup Y$ is a component of $M\setminus \Sigma_2$ and define $M_2$ as $M_1\cup Y$. It remains to show that its boundary filling is $\hat{M}_1$. 

Consider the manifold $(M_1\cup Y)\cup_{\S^1_2} B_1 \cup_{\S^2_2} B_2$, where $B_i$ are two $3$-balls. Then, $B_i\cap Y$ is a disc $D_i$. We observe  that $Y\cup B_1\cup B_2$ is a $3$-ball, which bounds the sphere $\Sigma^*_1\subset M_1$. It implies  that $M_2=M_1\cup Y$ is a subset of $\hat{M}_1$. Therefore, we can conclude that the boundary filling of $M_2$ is $\hat{M}_1$. 

\vspace{2mm}

\noindent \textbf{Case III}:  If $Y\subset M_1$, we have $A\subset \p M_1$ . If not, we find that $A\cap \i M_1\neq \emptyset$, which implies that  $\Sigma_1^*\cap \i M_1\neq \emptyset$. It is in  contradiction with the fact $\i M_1\subset M\setminus \Sigma_1$.

We recall that (a) $\Sigma_2\setminus \Sigma_1$ has two components, $D_1$ and  $ D_2$;  (b) $\p Y=A\cup D_1 \cup D_2$ (see (2) and (4) in Definition \ref{reduction}).  Combining with the fact that $A\subset \p M_1$, we have that $\Sigma^1_2$ and $\Sigma^2_2$ is contained in $M_1$, which implies that   $M_1 \cap \Sigma_2=(\p M_1\setminus \Sigma^*_1)\amalg \{\Sigma^i_2\}_{i=1,2}\subset \hat{M}_1$ is a union of some $2$-spheres. 

Because of the irreducibility of $\hat{M}_1$, each sphere in $\Sigma_2\cap M_1$ bounds a $3$-ball in $\hat{M}_1$. By Van-Kampen's Theorem, there is a component $M_2$ of $\hat{M}_1\setminus (M_1\cap \Sigma_2)$ with $\pi_1 (M_2)\cong \pi_1 (\hat{M}_1)$. We observe that $\hat{M_1}$ can be written as  the connected sum of the boundary filling $\hat{M}_2$ of $M_2$ with some $3$-manifolds. 

Recall that any irreducible $3$-manifold is prime. We have that $\hat{M}_1$ is prime, which implies that $\hat{M}_2\cong \hat{M_1}$. Therefore, we can conclude that the boundary filling of $M_2$ is $\hat{M}_1$.  
\end{proof}

We now use Lemma \ref{pun} to study the spheres in Theorem \ref{mini-seq}.

\begin{proposition}\label{top-replace} Let $(M, g)$, $\S$, $\{\S_k\}$ and $\Sigma^{(i)}$ be as in Theorem \ref{mini-seq}. If $\S$ satisfies that 
\begin{itemize}[leftmargin=15pt]
\item the sphere $\S$ bounds a compact $3$-manifold $M_0\subset M$; 
\item the boundary filling $\hat{M}_0$ of $M_0$ is irreducible and it  differs from  $\mathbb{R}P^3$. 
\end{itemize}
then there is a component $M'$ of $M\setminus \amalg^R_{i=1}\Sigma^{(i)}$ whose boundary filling is homeomorphic to $\hat{M}_0$.  \end{proposition}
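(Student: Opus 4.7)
The plan is to propagate the topological constraint imposed by $\S$ through the $\gamma$-reduction process described in Remark \ref{red} by iterating Lemma \ref{pun}, and then to pass to a limit as $k \to \infty$ so that the preserved topological type ends up attached to one of the minimal components of $M \setminus \amalg^R_{i=1}\S^{(i)}$.

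First I would verify the hypotheses of Lemma \ref{pun} at the initial step. For Theorem \ref{mini-seq} to produce any minimal surfaces at all, the sphere $\S$ must be non-contractible in $M$; combined with the irreducibility of $\hat{M}_0$, Proposition \ref{ball} forces $\hat{M}_0 \ncong \s^3$, and the Poincar\'e conjecture then yields $\pi_1(\hat{M}_0) \neq \{1\}$. Each $\S_k$ is isotopic to $\S$, so it bounds a compact $3$-manifold $M^k_0 \subset M$ whose boundary filling is homeomorphic to $\hat{M}_0$. This is exactly the configuration required to enter Lemma \ref{pun}.

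Next I would iterate Lemma \ref{pun} along the $\gamma$-reduction chain. For $k$ sufficiently large, Remark \ref{red} provides a finite sequence of $\gamma$-reductions transforming $\S_k$ into a collection $\tilde{\S}_k$ which is isotopic in $M$ to $S_k := \amalg^R_{i=0} S^{(i)}_k$. The statement of Lemma \ref{pun} is precisely the inductive step required: if a disjoint collection of embedded $2$-spheres bounds a compact region whose boundary filling is irreducible with non-trivial fundamental group, then every $\gamma$-reduction of that collection continues to bound a compact region with the same boundary filling. Applying this inductively along the reduction chain produces a compact component $M'_k \subset M \setminus S_k$ whose boundary filling is homeomorphic to $\hat{M}_0$.

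Finally I would pass to the limit $k \to \infty$. For $k$ large, each component of $S^{(0)}_k$ is homotopically trivial in $M$ and thus bounds an embedded $3$-ball by Proposition \ref{ball}; absorbing these balls into $M'_k$ does not alter its boundary filling, so we may assume $\p M'_k \subset \amalg^R_{i=1} S^{(i)}_k$. Since each $S^{(i)}_k$ consists of spheres parallel to $\S^{(i)}$ at distances $l/k$ (together with $\S^{(i)}$ itself when $n_i$ is odd), the complement $M \setminus \amalg^R_{i=1} S^{(i)}_k$ has one distinguished "outer" component that stabilizes, as $k \to \infty$, to a fixed component $M'$ of $M \setminus \amalg^R_{i=1} \S^{(i)}$; this $M'$ inherits the homeomorphism $\widehat{M'} \cong \hat{M}_0$ from $M'_k$. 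The main obstacle will be ruling out that $M'_k$ is a small "shell" region adjacent to a one-sided $\S^{(i)} = \mathbb{R}P^2$: the tubular region between $\S^{(i)}$ and $\p B(\S^{(i)}, 1/k)$ has boundary filling $\mathbb{R}P^3$, which is exactly the scenario the hypothesis $\hat{M}_0 \ncong \mathbb{R}P^3$ excludes. Once this degenerate case is ruled out, the outer-component identification gives the required $M'$.
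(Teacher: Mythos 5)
Your proposal follows the same route as the paper's proof: establish $\pi_1(\hat{M}_0)\neq\{1\}$ so that Lemma \ref{pun} applies, iterate Lemma \ref{pun} along the $\gamma$-reduction chain of Remark \ref{red} (pushing forward by the isotopy $\tilde{\S}_k\to S_k$) to obtain a component $M'_k$ of $M\setminus S_k$ with boundary filling homeomorphic to $\hat{M}_0$, and then transfer that conclusion to a component $M'$ of $M\setminus\amalg^R_{i=1}\S^{(i)}$. Your first two steps essentially reproduce the paper's.

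The last step, however, is under-argued, and this is where your write-up has a genuine gap. There is no single ``distinguished outer component'' of $M\setminus S_k$ --- there is one such component for each component of $M\setminus\amalg_i\S^{(i)}$ --- and ``stabilizes as $k\to\infty$'' is not the mechanism; the paper fixes one large $k$ and reasons directly. Concretely, it shows first that $M'_k\cap\amalg_i B(\S^{(i)}, m_i/k)=\emptyset$: every component of $M\setminus S_k$ lying inside one of these tubular regions has boundary filling $\mathbb{S}^3$ (a cylindrical shell between consecutive parallel spheres) or $\mathbb{R}P^3$ (the innermost shell around a one-sided $\S^{(i)}$), and both are excluded since $\hat{M}_0\not\cong\mathbb{S}^3,\mathbb{R}P^3$. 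Second, it observes that the component $M'$ of $M\setminus\amalg_i\S^{(i)}$ containing $M'_k$ satisfies $\i(M'\setminus M'_k)\subset\amalg_i\,\i\big(B(\S^{(i)}, m_i/k)\setminus\S^{(i)}\big)$, and each component of the right-hand side is a cylinder; thus $M'$ is obtained from $M'_k$ by attaching cylinders along some boundary spheres, an operation which leaves the boundary filling unchanged. Your $\mathbb{R}P^3$-shell observation is exactly the relevant half of the first point, but the second point --- the explicit cylinder-attachment argument, not a limiting argument --- is what actually carries the homeomorphism $\widehat{M'_k}\cong\hat{M}_0$ over to $\widehat{M'}$, and it needs to be written down.
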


 Since $\S$ is not homotopically trivial in $M$, then ${M_0}$ is not a $3$-ball. Namely, $\hat{M}_0$ is not homeomorphic to $\mathbb{S}^3$.  Each $\S_k\in \I^M(\S)$ gives  a diffeomorphism $\phi_k$ of $M$  with $\phi_k(\S)=\S_k$. Thus, it also bounds $\phi_k(M_0)$, diffeomorphic to $\hat{M}_0$ with a puncture. 

\begin{proof}  Let $\tilde{\S}_k$, $m_i$ and $S_k$ be as in Remark \ref{red}. For $k$ large enough, each sphere in $S^{(0)}_k$ is homotopically trivial in $M$. To simplify our proof,   we may assume that $S^{(0)}_k$ is empty.

Since $\tilde{\S}_k$ is obtained by a sequence of $\gamma$-reductions from $\S_k\in \I^M(\S)$, we inductively use Lemma \ref{pun} to find a component $M_1$ of $M\setminus \tilde{\S}_k$  whose boundary filling is homeomorphic to $\hat{M}_0$. Since $\tilde{\S}_k$ is isotopic to $S_k$ for $k$ large enough, there is a diffeomorphism $\phi$ of $M$ with $\phi(\tilde{\S}_k)=S_k$. Therefore, there is a component $M'_1=\phi(M_1)$ of $M\setminus S_k$ whose boundary filling is homeomorphic to $\hat{M}_0$. 

Since each $\S^{(i)}$ is $\mathbb{S}^2$ or 1-sided $\mathbb{R}P^2$, then $\i B(\Sigma^{(i)}, {m_i}/{k})$ is homeomorphic to $\mathbb{S}^2\times (0,1)$ or $\mathbb{R}P^3$ with a puncture for $k$ large enough. In addition, each component of $\i B(\S^{(i)}, {m_i}/{k})\setminus \S^{(i)}$ is a cylinder. 

\vspace{2mm}

We have that $M'_1\cap \amalg_i B(\S^{(i)}, {m_i}/{k})=\emptyset$. (If not, $M'_1\subset \i B(\Sigma^{(i)}, {m_i}/{k})$ for some $i$. Then, from the last paragraph,  the boundary filling of $M'_1$ is  homeomorphic to  $\mathbb{S}^3$ or $\mathbb{R}P^3$, a contradiction.)

Choose $M'$ as the component of $M\setminus \amalg_{i=1}^R\S^{(i)}$ containing $M'_1$. The set $\i (M'\setminus M'_1)$ is  a subset of $\amalg_i~\i (B(\S^{(i)}, m_i/k)\setminus \S^{(i)})$ (each component is a cylinder). We have that $M'$ is homeomorphic to $M'_1$ (or $M_1$). That is to say, the boundary filling of $M'$ is homeomorphic to $\hat{M}_0$. 
\end{proof}

\begin{remark}\label{punctures} (1) We have that $\pi_1({M}')$ is isomorphic to $\pi_1(M_0)$. Since the fundamental group remains unchanged under the $\gamma$-reduction, then the isomorphism comes from an isotopy of diffeomorphism of $M$ . 

\vspace{2mm}

\noindent (2) We have that $\pi_1(M')$  is non-trivial. The reason is as follows: 

Since $\S$ is not homotopically trivial in $M$, then $M_0$ is not $3$-ball (i.e. $\hat{M}_0$ is not simply-connected). From (1), we have that $\pi_1(M')\cong \pi_1(\hat{M}_0)\neq \{1\}$. 

\vspace{2mm}

\noindent (3) Let $S\subset M'$ be an embedded $2$-sphere. Then there is a unique component of $M'\setminus S$ which is homeomorphic to a punctured $3$-sphere. 

\vspace{2mm}

The sphere $S$ can be considered as a 2-sphere in the irreducible $3$-manifold $\hat{M}'$, where $\hat{M}'$ is the boundary filling of $M'$.  It bounds a $3$-ball $B\subset \hat{M}'$. Thus, $B\cap M'$ is a punctured $3$-sphere. It is a component of $M'\setminus S$.  

It remains to show the uniqueness.  Let $M_1$ and $M_2$ be two components of $M'\setminus S$. If they are two punctured $3$-spheres, then $\pi_1(M_i)=\{1\}$ for $i=1, 2$.  Van-Kampen's theorem shows that $\pi_1(M')\cong\pi_1(M_1)\ast\pi_1(M_2)$ is trivial, a contradiction with the fact that $\pi_1(M')\neq \{1\}$. 
\end{remark}

\begin{corollary}\label{contractible} Let $M'$ and $M_0$ be assumed as in Lemma \ref{top-replace}. Then any closed curve $\gamma\subset M'\setminus M_0$ is contractible in $M'$. 
\end{corollary}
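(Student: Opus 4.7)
The plan is to recognize $M' \setminus M_0$ as a punctured $3$-sphere, since punctured $3$-spheres are simply connected and the conclusion then follows immediately. The key ingredient is Remark \ref{punctures}(3): any embedded $2$-sphere $S \subset M'$ separates off a unique punctured $3$-sphere component of $M' \setminus S$.

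First I would argue, via the construction in the proof of Proposition \ref{top-replace}, that (possibly after absorbing finitely many $3$-balls into the punctures of $M'$ by a compactly supported ambient isotopy) we may assume $M_0 \subset \i M'$. Then $\Sigma = \p M_0$ is an embedded $2$-sphere sitting in $\i M'$, and it separates $M'$ into the two pieces $\i M_0$ and $M' \setminus M_0$. Applying Remark \ref{punctures}(3) to $\Sigma$, exactly one of these pieces is a punctured $3$-sphere. Because $\Sigma$ is not homotopically trivial in $M$ (an assumption of Proposition \ref{top-replace}), Proposition \ref{ball} forbids $\Sigma$ from bounding a $3$-ball in $M$, so $M_0$ is not a $3$-ball; combined with the irreducibility of $\hat{M}_0$ and the Poincar\'e conjecture, this forces $\hat{M}_0 \not\cong \s^3$. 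Since $\i M_0 \cong \hat{M}_0 \setminus \bar{B}^3$ has a single end yet fails to be an open $3$-ball, it cannot be a punctured $3$-sphere. Therefore $M' \setminus M_0$ must be the punctured $3$-sphere component, and every closed curve $\gamma \subset M' \setminus M_0$ is contractible already inside $M' \setminus M_0 \subset M'$.

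The hardest part is the preliminary reduction $M_0 \subset M'$, which is not explicit in the statement of Proposition \ref{top-replace}. One must revisit the construction producing $M'$ --- the isotopy $\Sigma \rightsquigarrow \Sigma_k$, the $\gamma$-reductions $\Sigma_k \rightsquigarrow \tilde{\Sigma}_k$, the diffeomorphism $\phi$ with $\phi(\tilde{\Sigma}_k) = S_k$, and the inclusion $M'_1 \subset M'$ whose difference $\i(M' \setminus M'_1)$ is a union of cylindrical collars around the $\Sigma^{(i)}$'s --- and check that each of these operations alters the region originally bounded by $\Sigma$ only by a finite disjoint union of $3$-balls. Those balls can then be swept into the punctures of $M'$ by a further compactly supported isotopy, after which the topological argument above closes out the proof.
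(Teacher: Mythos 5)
The paper's proof takes a genuinely different route. It argues by contradiction: if $\gamma \subset M' \setminus M_0$ were non-contractible in $M'$, the isotopy realizing the isomorphism $\pi_1(M') \cong \pi_1(M_0)$ (Remark \ref{punctures}) would produce a non-contractible circle $\gamma' \subset M_0$ and an annulus $A \subset M$ with $\partial A = \gamma \amalg \gamma'$. Making $A$ transverse to the $2$-sphere $\Sigma = \partial M_0$ and examining the component $A_0$ of $A \cap M_0$ containing $\gamma'$, one sees $\gamma'$ is a product of conjugates of curves $c_l \subset \Sigma$, each null-homotopic in $\overline{M}_0$ because $\Sigma$ is a sphere; hence $\gamma'$ is null-homotopic in $\overline{M}_0$, a contradiction. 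Crucially, this argument never needs $M_0 \subset M'$.

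Your proposal, by contrast, hinges on the claim that $M_0 \subset \i M'$, possibly after a compactly supported isotopy. You correctly flag this as the hard step, but your suggested route for closing it does not work. The construction of $M'$ in Proposition \ref{top-replace} involves ambient isotopies $\phi_k$ (sending $\Sigma$ to $\Sigma_k$) and $\phi$ (sending $\tilde{\Sigma}_k$ to $S_k$). Such isotopies do not alter $M_0$ merely by finitely many $3$-balls: they can relocate $M_0$ to $\phi_k(M_0)$, a region with no nesting relation to $M_0$. Moreover, absorbing leftover $3$-balls by a further compactly supported ambient isotopy cannot arrange $M_0 \subset M'$ selectively, since any isotopy of $M$ moves $M'$ as well as $M_0$. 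So the reduction to $M_0 \subset \i M'$ is a genuine gap, and your subsequent application of Remark \ref{punctures}(3) to $\Sigma \subset \i M'$ presupposes rather than proves the nesting. Indeed, the content of the corollary is precisely that $M_0 \cap M'$ carries $\pi_1(M')$, which is what your argument would need to assume. The paper's annulus argument sidesteps all of this.
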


\begin{proof} Suppose that  a closed curve $\gamma\subset M'\setminus M_0$ is  non-contractible in $M'$. Since $\pi_1(M')$ is isomorphic to $\pi_1(M_0)$  and the isomorphism comes from the isotopy (see Remark \ref{punctures}), then there is a circle $\gamma'\subset M_0$ and an annulus $A\subset M$ so that \begin{itemize}
\item $\gamma'$ is not contractible in $M_0$; 
\item $\partial A=\gamma\amalg \gamma'$.  
\end{itemize}

We may assume that $\S:=\p M_0$ intersects $A$ transversally and $A\cap \S$ is a disjoint union of circles $\{c_l\}_{l}$.  The component $A_0$ of $A\cap M_0$ containing $\gamma'$ is a disc with finitely punctures with $\p A_0\subset \gamma' \amalg_l c_l$. Since $\S$ is a $2$-sphere, then each $c_l\subset \S$ is contractible in $\overline{M}_0$. So $\gamma'$ is contractible in $\overline{M}_0$. However, it is non-contractible in $M_0$, a contradiction.   
\end{proof}

\section{Isotopy classes and Mixed boundary} In this section, we introduce the mixed boundary condition  and show the existence of $3$-submanifold with mixed boundary in a complete $3$-manifold with scalar curvature $\kappa\geq 1$ (see Theorem \ref{mixed-existence}). 

\begin{definition}\label{mixed}An orientable  $3$-manifold $(X, g)$ has \emph{mixed boundary} if it satisfies that 
\begin{itemize} \item[(a)] the boundary $\p X$ consists of  two mutually disjoint families $\{S_j\}_{j\in J'}$ and $\{S_j\}_{j\in J''}$, of 2-spheres;
 \item[(b)] each $S_j$ is minimal for $g$, where $j\in J'$;
 \item[(c)] there are disjoint punctured $3$-spheres $\{\O_l\}_l$ whose boundaries contain  $\amalg_{j\in J''} S_j$;
 \item[(d)] the metric $g$ satisfies  that for each $l$,
 $$d(\hat{S}_l, \p \O_l\cap \amalg_{j\in J''}S_j)>4\pi,$$
where $\hat{S}_l:=\p\O_l\cap \i~X$ is a disjoint union of some $2$-spheres.
\end{itemize}
\end{definition}

For example, if the boundary of  $3$-manifold is a union of minimal  $2$-spheres, it has mixed boundary. 

\begin{remark}\label{cover} If $(X, g)$ has mixed boundary, then the covering space $(\tilde{X}, \tilde{g})$ also has mixed boundary. 
\end{remark}

\begin{theorem}\label{mixed-existence}Let $(M, g)$ be a complete  open and  connected  orientable $3$-manifold with scalar curvature $\kappa\geq 1$ and $M_0\subset M$ be   a compact subset which satisfies the following conditions  \\
(a) the boundary $\S:=\p M_0$ is a $2$-sphere;\\
(b) the boundary filling $\hat{M}_0$ of $M_0$ is irreducible; \\
(c) $\hat{M}_0$ is neither $\mathbb{S}^3$ nor $\mathbb{R}P^3$. \\
Then, there is an open  $3$-manifold $M'\subset M$ so that (1)
 $M'$ is homeomorphic to $\hat{M}_0$ with finitely many punctures;
 (2) the closure of $(M', g|_{M'})$ is a compact $3$-manifold with mixed boundary.

\end{theorem}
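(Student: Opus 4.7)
The plan is to combine the area minimization of Theorem \ref{mini-seq} with two layers of the separation lemma (Lemma \ref{sep}) to handle the non-compactness of $M$. The outer layer of $\mu$-bubbles will produce the cut-off $2$-spheres $\{S_j\}_{j\in J''}$ that become the non-minimal boundary components of $\overline{M'}$, while the inner layer will produce the $\hat S_l$'s that sit in the interior of $\overline{M'}$ and bound the punctured-$3$-sphere collars $\Omega_l$.

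First I would fix a large $R$ and, using completeness of $(M,g)$, apply Lemma \ref{sep} to the compact annular region $\{R\le d(\cdot,M_0)\le R+8\pi\}$ (with $\partial_{\pm}$ being the two bounding level sets and $\partial_0=\emptyset$). Corollary \ref{top} gives that the resulting $\mu$-bubble surfaces $\{S_j\}_{j\in J''}$ are $2$-spheres; I take $N$ to be the compact component of $M\setminus\amalg_j S_j$ containing $M_0$. I then apply Lemma \ref{sep} a second time inside $N$ to produce embedded $2$-spheres $\{\hat S_l\}_l\subset\mathrm{Int}\,N$ with $d(\hat S_l,\partial N)>4\pi$ that separate $M_0$ from $\partial N$; by choosing these two $\mu$-bubble layers carefully and, if necessary, invoking the prime decomposition of Theorem \ref{C} to simplify the intermediate topology, I arrange that each region $\Omega_l\subset N$ lying between $\hat S_l$ and the nearby $S_j$'s is a punctured $3$-sphere. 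I next modify the metric in a thin collar of $\partial N$ to make it mean convex (as in Section 4.2), leaving the $\hat S_l$'s and a large neighborhood of $M_0$ untouched, and I apply Theorem \ref{mini-seq} to an area-minimizing sequence in $\mathscr{I}^{N}(\Sigma)$, where $\Sigma=\partial M_0$; this class is nontrivial since $\Sigma$ is not homotopically trivial in $N$ (because $\hat M_0$ is irreducible and is neither $\mathbb{S}^3$ nor $\mathbb{R}P^3$, so $\Sigma$ does not bound a ball). The theorem yields stable minimal $2$-spheres $\Sigma^{(1)},\dots,\Sigma^{(R')}$, and Proposition \ref{top-replace} applied in $N$ produces a component $M'_0$ of $N\setminus\amalg_i\Sigma^{(i)}$ homeomorphic to $\hat M_0$ with finitely many punctures. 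Finally, I set $M'$ to be the interior of $M'_0\cup\bigcup\Omega_l$, attaching exactly those collars $\Omega_l$ whose associated $S_j$'s belong to $\partial M'_0$; because each $\Omega_l$ is a punctured $3$-sphere this does not change the homeomorphism type, and $\overline{M'}$ has mixed boundary in the sense of Definition \ref{mixed}, with $J'$ indexing the retained $\Sigma^{(i)}$ and $J''$ indexing the corresponding $S_j$.

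The main technical difficulty will be arranging each $\Omega_l$ to be a punctured $3$-sphere, since a priori the intermediate annular region between $\hat S_l$ and $S_j$ could contain further closed prime factors. To handle this I would subdivide the annulus using the local $\mu$-bubble construction from the proof of Theorem \ref{C}, iteratively isolating the spherical pieces and, if any non-trivial prime factors survive, absorbing them into $M'_0$ (which only adds further punctures and does not alter the homeomorphism type of $\hat M_0$ minus punctures). A secondary check, routine but necessary, is that the metric modification near $\partial N$ is thin enough not to disturb the distance inequality $d(\hat S_l,S_j)>4\pi$ with respect to the original metric $g$, and that the minimal surfaces $\Sigma^{(i)}$ appearing in $\partial M'_0$ remain minimal for $g$ because they sit in the unmodified interior of $N$.
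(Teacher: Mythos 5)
Your ordering is backwards relative to what the structure of the problem requires, and this creates a genuine gap. You construct the $\hat S_l$'s and the regions $\Omega_l$ \emph{before} running the area-minimizing sequence that pins down $M'_0$, and you acknowledge that arranging each $\Omega_l$ to be a punctured $3$-sphere is then the ``main technical difficulty.'' But your proposed fix --- ``if any non-trivial prime factors survive, absorbing them into $M'_0$'' --- does not work: gluing a nontrivial prime factor onto $M'_0$ along a boundary $2$-sphere changes the homeomorphism type from $\hat M_0$-minus-punctures to $(\hat M_0 \# P)$-minus-punctures, contradicting the conclusion you need. Moreover, the minimal surfaces $\Sigma^{(i)}$ produced by Theorem \ref{mini-seq} inside $N$ have no reason to avoid the pre-chosen $\Omega_l$'s, so ``$M'_0 \cup \bigcup\Omega_l$'' is not a clean attachment along boundary $2$-spheres; it is an overlap of two subsets of $N$ whose intersection can be arbitrary. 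Finally, you assert that the $\Sigma^{(i)}$ in $\partial M'_0$ remain $g$-minimal because they lie in the unmodified interior of $N$, but some $\Sigma^{(i)}$ can perfectly well enter the modified collar of $\partial N$; the paper has to argue explicitly (via Proposition \ref{dist}) that any such surface is far from $M_0$, which is exactly what later makes $J''$ well-defined.

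The paper avoids all of this by reversing the order. It first runs the area-minimizing sequence in a large geodesic ball $B^M(M_0, 18\pi+\epsilon)$ with a mean-convex perturbation near the outer boundary; Proposition \ref{top-replace} then yields a component $M'$ already homeomorphic to $\hat M_0$ with finitely many punctures, hence irreducible minus punctures. \emph{Only then} does it apply the separation Lemma \ref{sep} to an annular region $Y \subset M'$ at distance roughly $2\pi$--$10\pi$ from $M_0 \cap M'$ to produce $\hat S_l \subset \i\, M'$. At that point Remark \ref{punctures}(3) makes the punctured-$3$-sphere property of $\Omega_l$ automatic: any embedded $2$-sphere in a manifold that is irreducible-minus-punctures bounds a unique punctured $3$-sphere on one side. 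There is nothing to ``arrange'' or ``absorb.'' The distance bookkeeping ($16\pi$ for the $J''$ spheres, $\hat S_l$ within $10\pi$ of $M_0$) then gives $d(\hat S_l, S_j) > 4\pi$ directly. You would need to restructure your argument to establish the topology of $M'$ \emph{before} introducing the $\hat S_l$'s; as written, the punctured-sphere condition and the clean attachment are both unproven.
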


\begin{remark}\label{non-trivial-sphere} Let $M$ and $\Sigma$ be assumed in Theorem \ref{mixed-existence}. Then, we can conclude that $\Sigma$ is not homotopically trivial in $M$. The reason is as follows: 

If not, Proposition \ref{ball} shows $\S$ bounds a $3$-ball $B\subset M$. Then, by the assumption that $\hat{M}_0$ is not a $3$-sphere,   we find that $M_0$ is not a $3$-ball and $M_0\cap B=\p \S$, which implies that $M_0\cup_\S B\subset M$ is a compact $3$-manifold without boundary. It is  in contradiction with the assumption  that $M$ is connected and open.

\end{remark}

We first construct the required manifold $M'$ as given in 
Lemma \ref{existence} and then show the manifold $(M', g|_{M'})$ has mixed boundary in Lemma \ref{bdy-proof}. 
\subsection{Find the required manifold $M'$}For a compact set $K\subset (M, g)$, we introduce two subsets about $K$: 
 \[B^M(K, r_1):=\{x~|~d(x, K)\leq r_1\} \text{ and } A^M(K, r_1, r_2):=\overline{B^M(K,r_2)\setminus B^M(K, r_1)}, \] where $r_2>r_1$.

\begin{lemma}\label{existence} Let $(M, g)$, $\Sigma$ and $M_0$ be assumed as in Theorem \ref{mixed-existence}. Then there is an open $3$-manifold $M'\subset M$ satisfying that 
\begin{itemize}
\item[(1)] $M'$ is homeomorphic to $\hat{M}_0$ with finitely many punctures; 
\item[(2)] the boundary of $(M', g|_{M'})$ consists of two disjoint families $\{S_j\}_{j\in J'}$ and $\{S_j\}_{j\in J''}$ of $2$-spheres;
\item[(3)] For $j\in J'$, $S_j$ is stable minimal for $g$, while for $j\in J''$, $S_j$ satisfies 
\[d(S_j, M_0\cap M')\geq 16\pi, \]
where $d$ is the distance function induced from the closure of $({M}', g|_{{M}'})$. \end{itemize}
\end{lemma}

\begin{proof}

 We may assume that the boundary of  the compact set $B^{M}(M_0, 18\pi+\epsilon)$ is smooth for some $\epsilon$. Lemma \ref{mean-convex-deform} allows to find a metric $g'$   on $B^{M}(M_0, 18\pi+\epsilon)$ so that 
\begin{itemize}
\item[(a)] $g'$ is equal to $g$ on $B^M(M_0, 18\pi)$;
\item[(b)] the mean curvature of $\p B^M(M_0, 18\pi+\epsilon)$ is positive for $g'$.
\end{itemize} Then, $(B^{M}(M_0, 18\pi+\epsilon), g')$ has mean convex boundary.  However,  the scalar curvature of $g'$ may be negative on $A^{M}(M_0, 18\pi, 18\pi+\epsilon)$. 

\vspace{2mm}

We first observe that  $\Sigma$ is not homotopically trivial in $M$ (see Remark \ref{non-trivial-sphere}). Then we  consider an area-minimizing sequence $\{\Sigma_k\}$ of the isotopy class of $\S$  in the manifold $(B^M(M_0, 18\pi+\epsilon), g')$. 
The existence of the limit of the sequence is ensured by the condition that $(B^M(M_0, 18\pi+\epsilon), g')$ has mean convex boundary.

By Theorem \ref{mini-seq}, after passing to a subsequence,  there are positive integers, $R$, $n_1$,$\cdots$, $n_R$ and pairwise disjoint stable minimal surfaces $\Sigma^{(1)}$, $\cdots$, $\Sigma^{(R)}$ such that 
\[\setlength{\abovedisplayskip}{0pt} \setlength{\belowdisplayskip}{0pt}\Sigma_k\rightarrow n_1\S^{(1)}+n_2\S^{(2)}+\cdots+n_R\S^{(R)}\]
in the sense of varifolds.  The surfaces $\{\S^{(i)}\}_{i=1}^R$ are stable minimal for $g'$. Based on their geometries, they could be classified into two families: 
\[\begin{split}&I':=\{\Sigma^{(i)}~|~ \Sigma^{(i)}\subset B^M(M_0, 18 \pi)\} \\ &I'':=\{\Sigma^{(i)}~|~ \Sigma^{(i)}\setminus B^M(M_0, 18\pi)\neq \emptyset. \}\end{split}\]
The surface $\Sigma^{(i)}\in I'$ is stable minimal for $g'$, because $g=g'$ on $B^M(M_0, 18\pi)$. For $\S^{(i)}\in I''$, it intersects $A(M_0, 18\pi, 18\pi+\epsilon)$ and $\Sigma^{(i)}\cap B^M(M_0, 18\pi)$ is stable minimal for $g$. Because  the scalar curvature $\kappa_g\geq 1$,  Proposition \ref{dist} shows that for any $x\in \Sigma^{(i)}\cap B^M(M_0, 18\pi)$,  
$$d_0(x, \p B^M(M_0, 18\pi))\leq 2\pi/\sqrt{3},  $$ where $d_0$ is distance function on $(M, g)$.  Thus, we have that for any $\S^{(i)}\in I''$, $d_0(\Sigma^{(i)}, M_0)\geq 18\pi- 2\pi/\sqrt{3}>16\pi$.

\vspace{2mm}

From our assumption that  $\hat{M}_0$ is irreducible and differs from $\mathbb{R}P^3$,   we use Proposition \ref{top-replace} to  find an open $3$-manifold $M'\subset M$  that is homeomorphic to $\hat{M}_0$ with finitely many punctures. 

We note that 
 that $M'$ is one of components of $B^{M}(M_0, 18\pi+\epsilon)\setminus \amalg_{i=1}^R \S^{(i)}$ and its boundary consists of some $2$-spheres $\{S_j\}_{j\in J}$ coming from $\{\S^{(i)}\}_{i=1}^R$ (see Remark \ref{bdy}).  
 
 Using the classification for $\{\S^{(i)}\}$, the spheres $\{S_j\}_{j\in J}$ split into two groups. 
 \[\begin{split}& J':=\{j\in J ~|~ S_j \text{ comes from some sphere in } I'\}\\& J'':=\{j\in J ~|~ S_j \text{ comes from some sphere in } I''\}\end{split}\]
 We recall that any surface $\Sigma^{(i)}\in I'$ is stable minimal for $g$ while $\Sigma^{(i)}\in I''$ satisfies $d_0(\Sigma^{(i)}, M_0)>16\pi$. Then, we find that 
 \begin{itemize}
 \item[(a)] For $j\in J'$, $S_j$ is stable minimal for $g$ 
 \item[(b)] For $j\in J''$, we have that $d(M_0\cap M', S_j)>16\pi$. 
 \end{itemize}\end{proof}
 
%
%
%
%
%
%

\subsection{Mixed boundary and punctured $3$-spheres} In the following, we show that $(M', g|_{M'})$ constructed in Lemma \ref{existence} has mixed boundary. 

\begin{lemma}\label{bdy-proof} Let $(M, g)$ , $\S$, $M_0$ be assumed as in Theorem \ref{mixed-existence} and $M'$ constructed as in Lemma \ref{existence}. Then $(M', g|_{M'})$ has mixed boundary.  
\end{lemma}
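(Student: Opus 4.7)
The first two bullets of Definition \ref{mixed} follow at once from Lemma \ref{existence} and Remark \ref{bdy-classify}: the boundary $\p \overline{M'}$ is a disjoint union of $2$-spheres $\{S_j\}_{j\in J'\amalg J''}$, and $S_j$ is stable minimal for $g$ whenever $j\in J'$. The substantive task will be to produce a collection of disjoint punctured $3$-spheres $\{\O_l\}_l$ in $\overline{M'}$ whose boundaries together contain every $S_j$ with $j\in J''$ and each of which has a single interior boundary $2$-sphere $\hat S_l$ at distance $>4\pi$ from its $J''$-components.

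My plan is to run a free-boundary $\mu$-bubble separation in the spirit of Lemma \ref{sep}. First I would fix a thin tubular neighborhood $N$ of $M_0\cap M'$ inside $\overline{M'}$ and set $X:=\overline{M'}\setminus \i N$, with the splitting $\p_- X:=\{S_j\}_{j\in J''}$, $\p_+X:=\p N$, and $\p_0 X:=\{S_j\}_{j\in J'}$. Remark \ref{bdy-classify} yields $d(\p_- X,\p_+X)\geq 16\pi-\epsilon$, well above the $4\pi$ threshold, and $\p_0 X$ is a disjoint union of stable minimal $2$-spheres as the free-boundary setup requires. Applying Proposition \ref{var-free} with a rescaled transition function $h(\tau)=\tan\bigl(\pi(\tau-D/2)/D\bigr)$, where $\tau$ interpolates from $0$ on $\p_-X$ to $D\geq 16\pi$ on $\p_+X$ with $|\nabla\tau|\leq 1$, keeps $\kappa+\tfrac12 h^2-|\nabla h|>0$ by the computation of Remark \ref{choice-h} generalized to $D>2\pi$. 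Corollary \ref{top}(2) then forces each component of the reduced boundary to be a $2$-sphere or a free-boundary disc, and capping the discs off through $\p_0 X$ as in the proof of Lemma \ref{sep} would yield a finite family of disjoint embedded $2$-spheres $\{T_k\}_k\subset \i \overline{M'}$ that separate $\p_- X$ from $\p_+ X$. The pushing argument from the proof of Proposition \ref{exist}, now with wide buffer zones coming from the rescaling of $h$, pins each $T_k$ into the middle band $\{\tau\approx D/2\}$, so $d(T_k,S_j)>4\pi$ for every $j\in J''$.

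To extract the $\O_l$'s I would apply Remark \ref{punctures}(3) to each $T_k$: exactly one component of $M'\setminus T_k$ is homeomorphic to a punctured $3$-sphere, and I take $\O_l$ to be its closure in $\overline{M'}$ whenever that component has some $S_j$ with $j\in J''$ on its frontier. Each $\O_l$ then has $\hat S_l:=\p\O_l\cap \i \overline{M'}=T_k$ as a single $2$-sphere, and the distance estimate from the previous step yields $d(\hat S_l,\p\O_l\cap\amalg_{j\in J''}S_j)>4\pi$. Pairwise disjointness of the $\O_l$'s follows from that of the $T_k$'s, and the separation property of $\{T_k\}_k$ guarantees $\amalg_{j\in J''}S_j\subset \amalg_l\p\O_l$.

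The hardest step will be verifying that the component extracted by Remark \ref{punctures}(3) is indeed the one containing the $J''$-boundary spheres, and not the ``core'' side of $M'$ carrying $\pi_1(M')\cong\pi_1(M_0)\neq\{1\}$. Here Corollary \ref{contractible} is decisive: by construction the relevant side of each $T_k$ lies at distance more than $4\pi$ from $M_0\cap M'$, so it is contained in $M'\setminus M_0$; every loop in it is then contractible in $M'$ by Corollary \ref{contractible}, which singles that side out as the simply connected one, hence as the unique punctured-$3$-sphere component from Remark \ref{punctures}(3). A secondary technical point, the metric location of the $\mu$-bubble, is absorbed by the rescaling of $h$ above.
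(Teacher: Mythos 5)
Your overall strategy is the right one and matches the paper's: produce separating $2$-spheres via a $\mu$-bubble argument in the spirit of Lemma \ref{sep}, then use Remark \ref{punctures}(3) to extract punctured $3$-spheres, and use Corollary \ref{contractible} (all loops in $M'\setminus M_0$ contract in $M'$) to pin down which side of each separating sphere is the punctured $3$-sphere. However, two steps are asserted rather than proved, and both are genuine gaps.

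First, the distance bound. You claim that ``the pushing argument from the proof of Proposition \ref{exist}, now with wide buffer zones coming from the rescaling of $h$, pins each $T_k$ into the middle band $\{\tau\approx D/2\}$.'' This does not follow. The pushing argument in Proposition \ref{exist} requires a foliation of a tubular neighborhood of $\p_\pm M$ by equidistant hypersurfaces and the inequality $h>\mathrm{div}(\eta)$ (resp.\ $h<-\mathrm{div}(\eta)$) there; both the foliation and the bound on $\mathrm{div}(\eta)$ are valid only for $\tau$ small, depending on the local geometry of $\p_\pm X$. Rescaling $h$ does not extend the region where these hold, so the minimizer is pushed only a small, uncontrolled distance from $\p_\pm X$, not to $\{\tau\approx D/2\}$. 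Consequently $d(T_k,\amalg_{j\in J''}S_j)>4\pi$ is not established. The paper avoids this by running the $\mu$-bubble argument not in all of $X$ but only in a compact band $Y$ chosen so that $A^{M'}(M_0\cap M',4\pi,8\pi)\subset Y\subset A^{M'}(M_0\cap M',2\pi,10\pi)$; then the separating spheres $\hat S_l$ automatically lie in $B^{M'}(M_0\cap M',10\pi)$, and since $d(S_j,M_0\cap M')\geq 16\pi$ for $j\in J''$ (Lemma \ref{existence}), one gets $d(\hat S_l, S_j)\geq 6\pi>4\pi$ for free. You should restrict the domain of the variational problem in this way rather than try to pin the bubble by the choice of $h$.

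Second, disjointness of the $\O_l$'s. The assertion ``Pairwise disjointness of the $\O_l$'s follows from that of the $T_k$'s'' is false: two disjoint separating $2$-spheres in $M'$ can perfectly well have nested punctured-$3$-sphere sides (e.g.\ if $T_2$ lies inside the punctured $3$-sphere bounded by $T_1$). The paper proves a ``Claim'' that for any $l,l'$ one of $\O_l\cap\O_{l'}=\emptyset$, $\O_l\subset\O_{l'}$, $\O_{l'}\subset\O_l$ holds, using the uniqueness in Remark \ref{punctures}(3), and then replaces $\{\O_l\}$ by the maximal elements of this poset. This step is not cosmetic: only after passing to the maximal elements is $M'':=M'\setminus\amalg_l\O_l$ a single component of $M'\setminus\amalg_l\hat S_l$, which is what makes the separation property yield $\amalg_{j\in J''}S_j\subset\amalg_l\p\O_l$ (rather than just that each $S_j$ is separated from $M_0$ in the ambient set). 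Your write-up also leaves implicit the Van Kampen step that converts ``every loop in the $J''$-side is contractible in $M'$'' into ``the $M_0$-side is not the punctured $3$-sphere''; that is fine to fill in, but worth spelling out since it is where $\pi_1(M')\neq\{1\}$ (Remark \ref{punctures}(2)) enters.
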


\begin{proof} The open $3$-manifold $M'\subset M$ constructed in Lemma \ref{existence} satisfies the following properties:  
\begin{itemize}
\item[(1)] $M'$ is homeomorphic to $\hat{M}_0$ with finitely many punctures; 
\item[(2)] the boundary of $(M', g|_{M'})$ consists of two mutually different families $\{S_j\}_{j\in J'}$ and $\{S_j\}_{j\in J''}$
\item[(3)] For $j\in J'$, $S_j$ is stable minimal for $g$;
\item[(4)] For $j\in J''$, $d(S_j, M_0\cap M')\geq 16\pi$. 
\end{itemize} We note that for $j\in J'$, the diameter of $S_j$ is bounded up by $\frac{2\pi}{\sqrt{3}}$, because of $\kappa_g\geq 1$ and Proposition \ref{dist}. 

\vspace{2mm}

In the following, we first find a collection $\{\hat{S}_l\}$ of $2$-spheres  satisfying that no component of $M'\setminus \amalg_l \hat{S}_l$ meets both $M_0\cap M'$ and $\amalg_{j\in J''} S_j$. Then we use these $2$-spheres to find punctured $3$-spheres and to verify the mixed boundary condition. 

\vspace{1mm}

\noindent\textbf{Step 1}: Find the required $2$-spheres $\{\hat{S}_l\}\subset M'$. 

\vspace{1mm}

Before constructing $\{\hat{S}_l\}$, we should find a region $Y$ which satisfies the conditions in Lemma \ref{sep}. Then, we use Lemma \ref{sep}  to find $\hat{S}_l$. 

\vspace{1mm}

We will construct a region $Y\subset M$ and a subset $J_1\subset J'$ with the following properties: 
\begin{itemize}
\item[(a)] $A^{M'}(M_0\cap M', 4\pi, 8\pi )\subset Y\subset A^{M'}(M_0\cap M', 2\pi, 10\pi)$; 
\item[(b)] $\p Y=\p_+ Y\amalg \p_- Y\amalg\{S_j\}_{j\in J_1}$, where $J_1\subset J'$
\item[(c)] $d(\p_+ Y, \p_- Y)\geq 4\pi$.
\end{itemize}

\begin{figure}[H]
\centering{
\def\svgwidth{\columnwidth}
{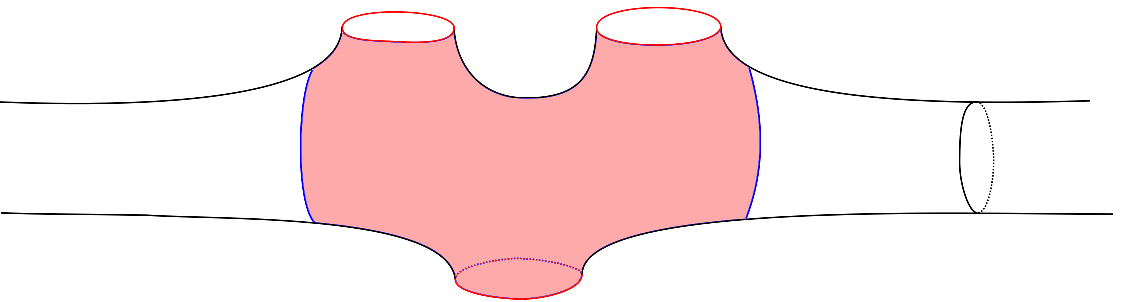}
\caption{}
\label{region}
}
\end{figure}

Let me explain the construction of $Y$ and $J_1$.  We define that 
\[J_1=\{j\in J'~|~ S_j\cap A^{M'}(M'\cap M_0, 4\pi, 8\pi)\neq \emptyset\}\]
We recall that for $j\in J'$, $S_j$ has the diameter $<2\pi$.  For $j\in J_1$, $S_j$ is a subset $\i A^{M'}(M'\cap M_0, 2\pi, 10\pi)$. 

For $j\in J_1$, we consider the tubular neighborhood $B^{M'}(S_j, \epsilon)$.  For $\epsilon_j$ small enough, we may assume that  $B^{M'}(S_j, \epsilon_j) \subset A^{M'}(M'\cap M_0, 2\pi, 10\pi)$ and  $\{B^{M'}(S_j, \epsilon_j)\}_{j\in J_1}$ are disjoint. We now define 
\[Y:=A^{M'}(M'\cap M_0, 4\pi, 8\pi)\cup_{j\in J_1} B^{M'}(S_j, \epsilon)\]
It is a subset of $A^{M'}(M'\cap M_0, 2\pi, 10\pi)$.

Let $\p_+ Y= \p Y \cap A^{M'}(M'\cap M_0, 2\pi,  4\pi)\cap \i M'$ and $\p_- Y=\p Y \cap A^{M'}(M'\cap M_0, 8\pi,  10\pi)\cap \i M'$. Then, we find that $\p Y=\p_+ Y \amalg \p_-Y\amalg_{j\in J_1} S_j$ and $d(\p_+ Y, \p_- Y)\geq 8\pi- 4\pi=4\pi$.

We apply Lemma \ref{sep} to the manifold $(Y, g|_{Y})$ and find a finite collection $\{\hat{S}_l\}$ of disjoint $2$-spheres in $\i Y$ so that
\begin{itemize}
\item no component of $Y\setminus \amalg \hat{S}_l$ intersects both $\p_+ Y$ and $\p_- Y$; 
\item $\hat{S}_l\subset Y\subset\i ~ B^{M'}(M_0\cap M',10\pi)$ for each $l$. 
\end{itemize}
Further, we have that no component of $M'\setminus \amalg_{l}\hat{S}_l$ meets both $M_0\cap M'$ and $\amalg_{j\in J''} S_j$. 
Otherwise, there is a component of $Y\setminus \amalg_{l} \hat{S}_l$ meeting both $\p_+ Y$ and $\p_- Y$, a contradiction with the fact that no component of $Y\setminus \amalg \hat{S}_l$ intersects both $\p_+ Y$ and $\p_- Y$. 

\vspace{2mm}

\noindent\textbf{Step 2}: Find disjoint punctured $3$-spheres, $\{\O_l\}_l$,  in Definition \ref{mixed}.  

\vspace{1mm}

For each $l$, there is a unique component $\O_l$ of $M'\setminus \hat{S}_l$ which is a punctured $3$-sphere (see Remark \ref{punctures}).  Since $\hat{S}_l\subset\i ~ B^{M'} (M_0\cap M', 10\pi)$ and $d(S_j, M_0\cap M')\geq 16\pi$ for $j\in J''$, we have that  $d(\hat{S}_l, \p \O_l\cap \amalg_{j\in J''}S_j)\geq 16\pi-10\pi>4\pi$. 

\vspace{2mm}

\noindent\textbf{Claim}: For each $l$ and $l'$, one of the following holds: (1) $\O_l\cap \O_{l'}=\emptyset$; (2) $\O_l\subset \O_{l'}$; (3) $\O_{l'}\subset \O_l$. 

\vspace{1mm}

Suppose that $\O_{l}\cap\O_{l'} $ and $\O_{l}\setminus \O_{l'}$ are both nonempty  (i.e. there are two points, $p_1\in \O_l\setminus \O_{l'}$ and $p_2\in \O_{l'}\cap \O_{l}$). It is sufficient  to show that $\O_{l'}\subset \O_l$. 

\vspace{1mm}

First, $\hat{S}_{l'}$ is a subset of $\O_l$. The reason is as follows: there is a curve $\gamma\subset \O_l$ connecting $p_1$ and $p_2$. The curve $\gamma$ intersects $\hat{S}_{l'}$ at some point. Namely, $\hat{S}_{l'}\cap \O_l\neq \emptyset$. 
Since $\hat{S}_l\cap \hat{S}_{l'}=\emptyset$, then $\hat{S}_{l'}$ is contained in one of components of $M'\setminus \hat{S}_l$. Thus , we have that $\hat{S}_{l'}$ is a subset of $\O_l$. 

\vspace{1mm}

We recall that  $\hat{S}_{l'}$ cuts $\O_l$ into two punctured $3$-spheres, since $\O_l$ is a punctured $3$-sphere. Then, take the component $B$ of $\O_{l}\setminus \hat{S}_{l'}$ with $\p B\cap \hat{S}_{l}=\emptyset$. It  is a component of $M'\setminus \hat{S}_{l'}$ and  a punctured $3$-sphere . From the uniqueness of $\O_{l'}$ (see Remark \ref{punctures}), we can conclude that $B$ is equal to $\O_{l'}$. Namely, $\O_{l'}\subset \O_l$. 

This completes the proof of the claim. 

\vspace{2mm}

Consider the partially ordered set $(\{\O_l\}_{l}, \subset)$. It is a finite set. In the following, we abuse the notions and write $\{\O_l\}_{l}$ for the maximal elements in $(\{\O_l\}_l, \subset)$. The sets $\{\O_l\}$ are pairwise disjoint.  

\vspace{1mm}

We notice  that $M'':=M'\setminus \amalg_l \O_l$ is a component of $M'\setminus\amalg_l \hat{S}_l$. Since each $\O_l$ is simply-connected,  we use Van-Kampen's theorem to have that $\pi_1(M'')\cong \pi_1(M')\neq \{1\}$. The isomorphism comes from the inclusion.  

\vspace{2mm}

\noindent\textbf{Step 3}:  Show that $\amalg_{j\in J''}S_j \subset \amalg_{l}\p \O_l$. 

\vspace{1mm}

We have that $M_0\cap M'\subset \amalg \O_l$ or $\amalg_{j\in J''} S_j\subset \amalg \O_l$.  If not, $M''\subset M\setminus \amalg_{l}\hat{S}_l$ meets $\amalg_{j\in J''}S_j$ and $M_0\cap M $, which is in contradiction with the fact that no component of $M'\setminus \amalg_l \hat{S}_l$ meeting both $M_0\cap M'$ and $\amalg_{j\in J''} S_j$. 

\vspace{1mm}

If $M_0\cap M'\subset \amalg_{l} \O_l$, we have that $M''$ is a subset of $M'\setminus M_0$.  There is a non-contractible circle $c\subset M''$ in $M'$ (since $\pi_1(M'')\neq \{1\}$).  However, Corollary \ref{contractible} shows that $c\subset M''\subset  M'\setminus M_0$ is contractible in $M'$, which leads to a contradiction. 

We can conclude that $\amalg_{j\in J''}S_j\subset \amalg_l \p \O_l$. Namely, $(M', g|_{M'})$ has mixed boundary.
\end{proof}

\begin{remark}\label{mean-mixed} Let $(M, g)$ be  an orientable (non-compact) $3$-manifold with scalar curvature $\kappa\geq 1$ whose boundary is a union of some $2$-spheres and minimal for $g$. Theorem \ref{mixed-existence} is also true for such $3$-manifolds. 
\end{remark}

\section{Classification of $3$-manifold with mixed boundary}

In this section, we  classify $3$-manifolds with mixed boundary and with uniformly positive scalar curvature. 

Let be $(M^3, g)$ a $3$-manifold $(M^3, g)$ with mixed boundary and  $\{S_j\}_{j\in J'}$,  $\{S_j\}_{j\in J''}$ and $\O_l$ be defined in  Definition \ref{mixed}. In what follows, we define  $J_l:=\{~j\in J~|~S_j\subset \p \O_l\}$, $J'_l:=J'\cap J_l$ and $J''_l:=J''\cap J_l$. Then, the term (d) in Definition \ref{mixed} can be rewritten as follows: 
\[d(\hat{S}_l,  \amalg_{j\in J''_l}S_j)>4\pi\]

\begin{figure}[H]
\centering{
\def\svgwidth{\columnwidth}
{\scalebox{0.6}{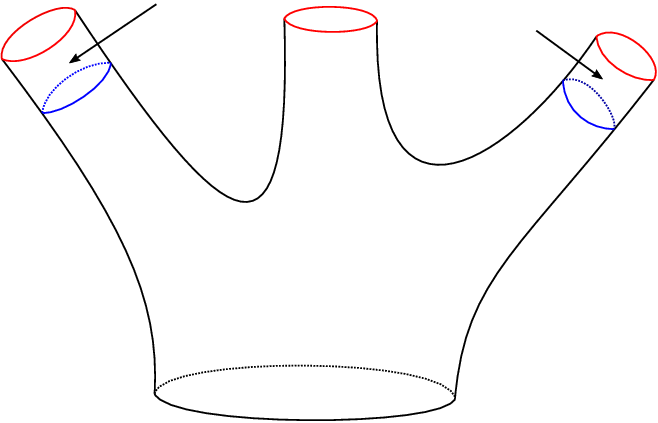}}
\caption{}
\label{metric}
}
\end{figure}

\begin{remark}\label{pi-2}Let $\hat{M}=M\cup_{j\in J} B_j$ be the boundary filling  of $M$, where $B_j$ is a $3$-ball. We have the following properties:

 (1)  If $\pi_2(M)$ is generated by the spheres in $\p M$, then $\pi_2(\hat{M})$ is trivial.

(2) Each $\hat{S}_l$ is also  a subset in $\hat{M}$ and  bounds a $3$-ball $\hat{\O}_l\subset \hat{M}$, where   $\hat{\O}_l:=\O_{l}\bigcup_{j\in J_l}(\cup_{S_j} B_j)$.  

(3) The $3$-balls, $\{\hat{\O}_l\}_l$ and $\{B_j\}_{j\in J'\setminus \amalg_l J_l}$ are disjoint in $\hat{M}$. 

\end{remark}

\begin{lemma}\label{new-metric} Let $(M^3, g)$ be a manifold with mixed boundary and $\{S_j\}_{j\in J'\amalg J''}$, $\{\O_l\}_l$ and $\hat{S}_l$ be assumed as above. Then, there is a metric $g'$ on $M$ with the following properties: 
\begin{itemize}
\item[(1)] $(M, g')$ has mean convex boundary; 
\item[(2)] For each $j\in J''$, there is a neighborhood $B(S_j , \epsilon_j)$ of $S_j$ in $M$ so that $\{B(S_j, \epsilon_j)\}_{j\in J''}$ and $\{S_j\}_{j\in J'}$ are disjoint; 
\item[(3)] $d_{g}(S'_j, \hat{S}_l)>4\pi$ for $j\in J''_l$; 
\item[(4)] $g'$ is equal to $g$ on $M\setminus \amalg_{j\in J''} B(S_j, \epsilon)$, 
\end{itemize}where $S'_j:=\p B(S_j, \epsilon_j)\cap \i M$ (see Figure \ref{metric}). 
\end{lemma}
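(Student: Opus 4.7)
The plan is to apply the conformal deformation from Section 4.2 locally in a thin collar of each non-minimal boundary sphere $S_j$, $j\in J''$, leaving both the minimal spheres $\{S_j\}_{j\in J'}$ and the region near each $\hat{S}_l$ untouched. The condition $d(\hat{S}_l,\amalg_{j\in J_l''}S_j)>4\pi$ gives us slack to shrink the collars enough to preserve the $4\pi$-distance, and mean convex components stay mean convex under the construction because it is purely local.

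First, for each $j\in J''$ I would choose a radius $\epsilon_j>0$ small enough that
\begin{itemize}[leftmargin=15pt]
\item the tubular neighborhood $B(S_j,\epsilon_j)$ admits a foliation by the equidistant hypersurfaces $S_j^\rho=\{d(\cdot,S_j)=\rho\}$ for $\rho\in[0,\epsilon_j]$;
\item $\{B(S_j,\epsilon_j)\}_{j\in J''}$ are pairwise disjoint and each is disjoint from $\bigcup_{j\in J'}S_j$;
\item $d(S_j',\hat S_l)>4\pi$ for every $j\in J_l''$, where $S_j'=\partial B(S_j,\epsilon_j)\cap\mathrm{Int}\,M$.
\end{itemize}
The last condition is achievable because $d(S_j,\hat S_l)>4\pi$ by hypothesis and $d(S_j',\hat S_l)\geq d(S_j,\hat S_l)-\epsilon_j$, so any $\epsilon_j$ smaller than $d(S_j,\hat S_l)-4\pi$ works. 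Since $J''$ is finite (the spheres $\{\hat S_l\}$ are finitely many and $J''=\amalg_l J_l''$), a uniform small choice suffices.

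Next, on $\amalg_{j\in J''}B(S_j,\epsilon_j)$ I would perform the conformal deformation recalled in Section 4.2: fix a smooth positive function $h_j:[0,\epsilon_j]\to\mathbb{R}_+$ with $h_j(t)=1$ for $t\geq\epsilon_j/2$, $h_j(0)=2$, and
\[
h_j'(0)>2\max_{x\in S_j}|H_g(x)|+2,
\]
and set $f_j(x)=h_j(d(x,S_j))$ on $B(S_j,\epsilon_j)$, extended by $1$ elsewhere. Define
\[
g':=f^2 g,\qquad f:=\prod_{j\in J''}f_j.
\]
Because the supports of the $f_j-1$ are disjoint, $f$ is a well-defined smooth positive function, and $g'\equiv g$ outside $\amalg_{j\in J''}B(S_j,\epsilon_j)$. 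The computation from Section 4.2 shows that along each $S_j$, $j\in J''$, the mean curvature in $g'$ equals
\[
\hat H_j(x)=h_j(0)^{-1}\bigl(H_g(x)+2h_j'(0)h_j(0)^{-1}\bigr)>0,
\]
so $S_j$ is strictly mean convex with respect to $g'$. The minimal spheres $\{S_j\}_{j\in J'}$ lie in the region where $g'=g$, so they remain minimal, hence mean convex.

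Putting these pieces together yields all four bullets of the lemma. The only real care point, and the one I expect to be the main nuisance, is verifying that the $\epsilon_j$ may simultaneously satisfy the three geometric requirements in the first step while the collars $B(S_j,\epsilon_j)$ still carry an equidistant foliation; this follows from the tubular neighborhood theorem applied to each $S_j$ and the fact that $J''$ is a finite index set. The conformal modification itself is by now standard, and the distance condition $d(S_j',\hat S_l)>4\pi$ survives because $g'=g$ in a neighborhood of $\hat S_l$ and between $S_j'$ and $\hat S_l$.
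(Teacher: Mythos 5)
Your proposal is correct and follows essentially the same path as the paper: shrink collars $B(S_j,\epsilon_j)$ around the non-minimal spheres using the $d(\hat S_l,\amalg_{j\in J_l''}S_j)>4\pi$ slack, then apply the conformal collar deformation of Section 4.2 locally, leaving $g$ unchanged on the minimal spheres $\{S_j\}_{j\in J'}$ and near each $\hat S_l$. The paper's proof is more terse but makes the same three choices on $\epsilon_j$ and invokes the same deformation; the only incidental difference is that the paper additionally requires $B(S_j,\epsilon_j)\subset\Omega_l$ for $j\in J_l''$ (useful downstream, not needed for the lemma itself), and your side remark that $J''$ is finite because the $\hat S_l$ are finitely many is unnecessary since the choice of $\epsilon_j$ is made per component anyway.
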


\begin{proof} For each $j\in J''$, choose $\epsilon_j>0$ so that $\{B(S_j, \epsilon_j)\}_{j\in J''}$ and $\{S_j\}_{j\in J'}$ are disjoint. For $\epsilon_j$ sufficiently small, we may assume that for $j\in J''_l$, $d(\hat{S}_l, S'_j)>4\pi$  and $B(S_j, \epsilon_j) \subset \O_l$ for $j\in J''_l$, where $S'_j=\i M\cap \p B(S_j, \epsilon_j)$ (see Figure \ref{metric}). 

We apply Lemma \ref{mean-convex-deform} to each $B(S_j, \epsilon_j)$ and obtain a metric $g'$ on $M$ which satisfies  that 

(I) for $j\in J''$, $S_j$ is mean convex, with respect to $g'$; 

(II)  $g'=g$ on $M'\setminus \amalg_{j\in J''} B(S_j, \epsilon_j)$.

Combining (II) with the term (b) in Definition \ref{mixed}, we can conclude that  $(M, g')$ has mean convex boundary. 
\end{proof}

 We remark that if the scalar curvature of $(M, g)$ is uniformly positive, the scalar curvature of $g'$ may be negative on $B(S_j, \epsilon_j)$ for some $j$. 
 
 \vspace{2mm}

 In the following, we  study the fundamental group of a $3$-manifold with mixed boundary and with uniformly positive scalar curvature

\begin{lemma}\label{pre} Let $(M, g)$ be a $3$-manifold with mixed boundary satisfying that $\pi_2(M)$ is generated by the spheres in $\p M$. If $\pi_1(M)$ has a subgroup which is isomorphic to $\mathbb{Z}$,  then the scalar curvature $\kappa$ of $(M, g)$ can not be bounded below by one.
\end{lemma}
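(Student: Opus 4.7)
I would argue by contradiction: suppose the scalar curvature of $(M,g)$ satisfies $\kappa\geq 1$, and let $\gamma\subset M$ be a loop whose class generates a $\mathbb{Z}$-subgroup of $\pi_1(M)$.

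\emph{Step 1 (Reduction to a $\mathbb Z$-cover).} Let $p\colon\tilde M\to M$ be the covering associated to $\langle[\gamma]\rangle\subset\pi_1(M)$, so that $\pi_1(\tilde M)\cong\mathbb Z$ and $\tilde M$ is homotopy equivalent to $\mathbb S^1$. The pullback $\tilde g=p^{*}g$ still has $\tilde\kappa\geq 1$, and by Remark~\ref{cover} it carries a mixed boundary; moreover $\pi_2(\tilde M)=\pi_2(M)$ is still generated by boundary spheres. Fix a lift $\tilde\gamma$ of $\gamma$ generating $\pi_1(\tilde M)$. Since $\tilde M$ is an open 3-manifold homotopy equivalent to $\mathbb S^1$, Corollary~\ref{exhaustion} provides an exhaustion of $\tilde M$ by compact submanifolds with connected closed boundary surfaces.

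\emph{Step 2 (A linking loop very far from $\tilde\gamma$).} The goal is to produce a simple closed curve $\sigma\subset\tilde M$ that is null-homotopic in $\tilde M$ but non-contractible in $\tilde M\setminus\tilde\gamma$, and at distance larger than $2\pi/\sqrt{3}$ from $\tilde\gamma$. Combining the $\mu$-bubble separation lemma (Lemma~\ref{sep}) applied on a thick annular region around $\tilde\gamma$ with Lemma~\ref{cut}, I would obtain a separating embedded 2-sphere at distance $\gg 2\pi/\sqrt{3}$ from $\tilde\gamma$, bounding a compact region $K\supset\tilde\gamma$. Corollary~\ref{required-curve} then yields the desired loop $\sigma\subset\tilde M\setminus K$.

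\emph{Step 3 (Minimal disc and distance contradiction).} Apply Lemma~\ref{new-metric} to obtain an auxiliary metric $\tilde g'$ with mean convex boundary, equal to $\tilde g$ off small collars of the non-minimal boundary spheres; these collars can be chosen so thin that they are disjoint from both $\sigma$ and a large neighborhood of $\tilde\gamma$, so $\tilde\kappa\geq 1$ continues to hold on the region containing $\sigma$ and $\tilde\gamma$. Because $\sigma$ is null-homotopic in $\tilde M$, the Meeks–Yau machinery underlying Theorem~\ref{disc} yields a stable area-minimizing embedded disc $D\subset\tilde M$ with $\partial D=\sigma$. Since $\sigma$ is non-contractible in $\tilde M\setminus\tilde\gamma$, the disc $D$ must intersect $\tilde\gamma$. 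But Proposition~\ref{dist} forces every point of $D$ to lie within $2\pi/\sqrt{3}$ of $\partial D=\sigma$, whence $d(\tilde\gamma,\sigma)\leq 2\pi/\sqrt{3}$, contradicting the choice of $\sigma$ in Step~2.

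\emph{Main obstacle.} The hardest ingredient is producing the stable minimal disc $D$ with prescribed \emph{interior} boundary curve $\sigma$, since Theorem~\ref{disc} as stated only handles curves in $\partial X$. I would resolve this either by cutting $\tilde M$ along an appropriate $\mu$-bubble sphere to convert $\sigma$ into a boundary curve of a compact mean-convex submanifold to which Theorem~\ref{disc} applies directly, or by invoking the interior version of the Meeks–Simon–Yau loop theorem. One must also keep $D$ inside the region where $\tilde g=\tilde g'$ so that $\kappa\geq 1$ is available along $D$ for Proposition~\ref{dist}; this is ensured by taking the collars in Lemma~\ref{new-metric} thin enough and placing $\sigma$ via Corollary~\ref{required-curve} well away from them.
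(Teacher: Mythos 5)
Your overall strategy — pass to the $\mathbb{Z}$-cover, find a linking loop $\sigma$ far from $\gamma$ via Corollary~\ref{required-curve}, fill $\sigma$ by a stable minimal disc in the auxiliary mean-convex metric from Lemma~\ref{new-metric}, and contradict the Schoen–Yau--Rosenberg distance estimate (Proposition~\ref{dist}) — is exactly the paper's strategy. However, there is a genuine gap in Step~3, and the ``main obstacle'' you single out is not actually the hard point.

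The gap: you assert that the collars in Lemma~\ref{new-metric} can be chosen ``so thin that they are disjoint from $\sigma$ and a large neighborhood of $\tilde\gamma$, so $\tilde\kappa\geq 1$ continues to hold on the region containing $\sigma$ and $\tilde\gamma$,'' and leave it at that. This does not control the disc. The area-minimizing disc $D$ with $\partial D=\sigma$ is a global object: mean convexity of $\partial M$ keeps $D$ in the interior, but nothing in your argument stops $D$ from sweeping through one of the deformed collars $B(S_j,\epsilon_j)$, $j\in J''$, which may lie far from both $\sigma$ and $\tilde\gamma$. Once $D$ enters such a collar, the metric there is $g'\neq g$ and the scalar curvature lower bound $\kappa\geq 1$ is no longer available along $D$, so Proposition~\ref{dist} cannot be invoked. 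The paper resolves this with a separate \emph{Claim} that $D\subset M\setminus\amalg_{j\in J''}B(S_j,\epsilon_j)$: if $D$ met some $S'_{j_0}$ with $j_0\in J''_l$, then because $\partial D=\sigma$ is disjoint from $\Omega_l$, some component of $D\cap(\Omega_l\setminus\amalg_j B(S_j,\epsilon_j))$ would be a stable minimal surface (for the original metric $g$) connecting $\hat S_l$ to $\amalg_{j\in J''_l}S'_j$; Corollary~\ref{dist1} then forces $d(\hat S_l,\amalg_j S'_j)\leq 4\pi$, contradicting the choice $d(\hat S_l,S'_j)>4\pi$ in Lemma~\ref{new-metric}. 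This use of the mixed-boundary geometry (the $>4\pi$ buffer inside each punctured sphere $\Omega_l$) is precisely why the mixed-boundary hypothesis appears in the lemma, and it cannot be replaced by ``make the collars thin.''

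On the obstacle you flag: the paper does not use Theorem~\ref{disc} here and does not need an embedded disc. It invokes the classical Plateau problem (Douglas--Rad\'o/Morrey) for the null-homotopic interior curve $\sigma$ in the compact mean-convex manifold $(M,g')$; the resulting area-minimizing (possibly only immersed) disc is stable, which is all that Proposition~\ref{dist} requires, and mean convexity keeps it off $\partial M$. So the ``interior loop theorem'' detour and the $\mu$-bubble sphere in Step~2 are unnecessary. Conversely, the step you treat as routine — keeping $D$ in the region where $g'=g$ — is the place where real work is needed.
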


\begin{proof}
We may assume that $\pi_1(M)$ is isomorphic to $\mathbb{Z}$. If not, we choose a cover of $M$ whose fundamental group is isomorphic to $\mathbb{Z}$ and  replace $M$ by it. (By Remark \ref{cover}, its cover is also a manifold with mixed boundary.)

\vspace{1mm}

In the following, we assume that $B_j$, $\hat{M}$, $\hat{\O}_l$, $J_l$, $S'_j$ and $g'$ are defined as above. Suppose  the contrary that  the scalar curvature $\kappa$ of $g$ is bounded below by one (i.e. $\kappa\geq 1$). 

\vspace{1mm}

By Remark \ref{pi-2}, we have $\pi_2(\hat{M})=\{1\}$( i.e.$\hat{M}$ is irreducible). Since $\pi_1(\hat{M})$ is infinite, the universal cover of ${\hat{M}}$ is contractible (see Corollary 3.9 of  \cite{HA} on Page 51), which implies  $\hat{M}$ is a $K(\mathbb{Z}, 1)$-space. Namely, $\hat{M}$ is homotopically equivalent to $\s^1$. 

\vspace{2mm}

Let $\gamma\subset M$ be  a circle generating $\pi_1(\hat{M})$. We may assume that $\gamma$ is contained in $M\setminus\amalg_l \O_l$ (since each $\O_l$ is a punctured $3$-sphere). By Corollary \ref{required-curve}, there is a closed curve $\sigma\subset \hat{M}\setminus B(\gamma, 5\pi)$ so that it is contractible in $\hat{M}$ but non-contractible in $\hat{M}\setminus \gamma$. Moreover, 
$\sigma$ is a subset of $\hat{M}\setminus B(\gamma, 5\pi)$, which implies   $d(\sigma, \gamma)\geq 5\pi >4\pi$.

Since there are disjoint $3$-balls, $\{\hat{\O}_l\}$ and $\{B_{j}\}_{j\in J'\setminus \amalg_{l} J_l}$, we may assume that $\sigma$ is a subset of $M\setminus \amalg_l \O_l= \hat{M}\setminus (\amalg_l \hat{\O}_l \cup \amalg_{j\in J'\setminus\amalg_{l} J_l} B_j)$.  Combining with  Van-Kampen's theorem, we find $\pi_1(M)\cong \pi_1 (\hat{M})$ and $\pi_1(M\setminus \gamma)\cong \pi_1(\hat{M}\setminus \gamma)$, which implies the following properties: 
\begin{itemize} 
\item[(1)]$\gamma$ is homotopically trivial in $M$ ; 
\item[(2)] it is non-contractible in $M\setminus \gamma$. 
\end{itemize}

We choose the metric $g'$ constructed from Lemma \ref{new-metric}. The manifold $(M, g')$ has mean convex boundary. Then, we use the solution of Plateau problem to find an area-minimizing disc $D\subset M$ with boundary $\sigma$.  It is stable minimal for $g'$. 
\vspace{1mm} 

\noindent\textbf{Claim:} $D$ is also stable minimal for $g$. 

\vspace{1mm}

We recall that $g'=g$ on $M\setminus \amalg_{j\in J''} B(S_j, \epsilon_j)$. 
It is sufficient to show that $D$ is a subset of $M\setminus \amalg_{j\in J''} B(S_j, \epsilon)$.  Suppose the contrary that $D$ intersects some $B(S_{j_0}, \epsilon_{j_0})$ for some $j_0\in J''_l\subset J''$. Say, $D\cap {S}'_{j_0}\neq \emptyset$.

 Since $\sigma=\p D$ is away from $\O_l$,  there is a component $D_s$ of $D\cap (\O_l\setminus \amalg_{j\in J''_l} B(S_j, \epsilon_j))$  connecting $\hat{S}_l$ and $\amalg_{j\in J''_l} S'_{j}$.

Since the scalar curvature $\kappa$ of $(M, g)$ $\geq 1$, Corollary \ref{dist1} implies that $d(\amalg_{j\in J''_l}S'_{j}, \hat{S}_l)\leq 4\pi$. However, $d(\amalg_{j\in J''_l}S'_{j}, \hat{S}_l)>4\pi$ (see Lemma \ref{new-metric}), which leads to a contradiction. It completes  the proof of the claim.

\vspace{2mm}

We now use the claim to finish the proof. Since $\sigma$ is non-contractible in $M\setminus \gamma$, then $D\cap \gamma$ is non-empty. Consider the set $B(\gamma, 4\pi):=\{x\in M~|~ d(\gamma, x)\leq 4\pi\}$ and a component $D'$ of $D\cap B(\gamma, 4\pi)$ intersecting $\gamma$. The boundary $\p D'$ is a subset $\p B(\gamma, 4\pi)$.

 Because $\kappa\geq 1$, Theorem \ref{dist} gives that\[D'~\text{is contained in the} ~\frac{2\pi}{\sqrt{3}}\text{-neighborhood of }\p D' ~\text{in} ~D'. \]Namely, $d(\p D', \gamma)<2\pi$. 
However, $d(\gamma, \p D')\geq 4\pi$ (since $\p D'\subset \p B(\gamma, 4\pi)$), a contradiction. We complete the proof. \end{proof}

In the following, we use Lemma \ref{pre} to finish the proof for Theorem \ref{classify}
\vspace{1mm}

\noindent {\bf{Theorem \ref{classify}}} \emph{ Let $(M, g)$ be a compact $3$-manifold with scalar curvature $\kappa\geq 1$ and with mixed boundary. If $\pi_2(M)$ is generated by the spheres in $\p M$, then $\text{Int}~M$ is homeomorphic to a spherical $3$-manifold with finitely many punctures. }

\vspace{1mm}
\begin{rem} If $M$ is non-compact, it is homeomorphic to $\mathbb{R}^3$ with  (at most) countably many disjoint open $3$-balls removed. 
\end{rem}

\begin{proof}Let  $\hat{M}$ be obtained from $M$ by filling the boundary with some $3$-balls. Then  it is a closed  manifold and $\pi_2(\hat{M})=\{1\}$.    We  have that  $\pi_1(\hat{M})$ is finite. The reason is as follows:

 If not,  the universal cover of $\hat{M}$ is contractible (see Corollary 3.9 of \cite{HA} on Page 51).  Namely, the compact manifold $\hat{M}$ is a $K(\pi, 1)$-space and $\pi_1(\hat{M})$ is non-trivial. From Lemma 4.1 of \cite{luck} on Page 5,  $\pi_1(\hat{M})$ is torsion-free, which implies that any non-trivial element in $\pi_1(M)$ has infinite order. The subgroup generated by the element is isomorphic to $\mathbb{Z}$.  However, the scalar curvature $\kappa \geq 1$, which  contradicts with Lemma \ref{pre}.  
 
 \vspace{2mm}

 Since $\pi_1(\hat{M})$ is finite, the universal cover of $\hat{M}$ is compact. 
The  Poincar\'e conjecture (see \cite{BBBMP} ,\cite{MT} and \cite{cao-zhu}) shows that   the universal cover is homeomorphic to $\s^3$. Namely, $\hat{M}$ is a spherical $3$-manifold. Thus, $\i M$ is a spherical $3$-manifold with finitely many punctures. \end{proof}

Theorem \ref{C} shows that a complete orientable $3$-manifold with uniformly positive scalar curvature has a prime decomposition. We now use Theorem \ref{classify} to complete the proof of Theorem \ref{A}. It is sufficient to show that each prime factor in the decomposition is a spherical $3$-manifold or $\mathbb{S}^1\times \mathbb{S}^2$. 
\begin{proof} (The proof of Theorem \ref{A}) Let $(M, g)$ be a complete connected  orientable $3$-manifold with scalar curvature $\kappa\geq 1$. From Theorem \ref{C}, $M$ is homeomorphic to a (possibly) infinite connected sum of closed $3$-manifolds, $\{\hat{M}_l\}_{l}$, where $\hat{M}_l$ is a closed and prime $3$-manifold.  For each $l$, we can decompose $M$ in the following way: $$M\cong \hat{M}_l\#M'_l,$$ where $M'_l$ is a non-compact $3$-manifold. 

It is sufficient to show that each $\hat{M}_l$ is homeomorphic to $\mathbb{S}^1\times \mathbb{S}^2$ or a spherical 3-manifold.

For each $\hat{M}_l$, there is an embedded $2$-sphere $\S$ satisfying that

\begin{itemize}
\item it bounds a compact $3$-manifold $M_l\subset M$;
\item $\i M_l$ is homeomorphic to $\hat{M}_l$ with a puncture. 
\end{itemize}

We assume that $\hat{M}_l$ is not homeomorphic to any of $\mathbb{S}^3$, $\mathbb{S}^1\times \mathbb{S}^2$ and $\mathbb{R}P^3$. It remains to prove that $\hat{M}_l$ is a spherical $3$-manifold. 

\vspace{1mm}

By Theorem \ref{mixed-existence}, there is an open $3$-manifold $M'_l\subset M$ so that 

 (1) $M'_l$ is homeomorphic to $\hat{M}_l$ with finitely many punctures; 

 (2) the closure of $(M'_{l}, g|_{M'_l})$ is a compact $3$-manifold with mixed boundary. 

We recall that any prime $3$-manifold is irreducible except $\mathbb{S}^1 \times \mathbb{S}^2$(see Proposition 1.4  of \cite{HA}). Then, we find that $\hat{M}_l$ is irreducible, which combines with Proposition \ref{ball} to get $\pi_2(\hat{M})$ is trivial.  Since $\hat{M}$ is the union of $M$ with some $3$-balls, we have that $\pi_2(M)$ is generated by the spheres in $\partial M$. 

Since  the scalar curvature  $\kappa\geq 1$, we use Theorem \ref{classify} to show that $M'_l$ is a spherical $3$-manifold with finitely many punctures. Namely, $\hat{M}_l$ is homeomorphic to a spherical $3$-manifold. 
\end{proof}
\begin{corollary} \label{mean-sphere} Let $(M, g)$ be an orientable  and connected $3$-manifold with scalar curvature $\kappa\geq 1$. If $\p M$ is a union of $2$-spheres and minimal for $g$, then $M$ is homeomorphic to an infinite connected sum of spherical $3$-manifolds, $3$-balls and some copies of $\mathbb{S}^1\times \mathbb{S}^2$. 
\end{corollary}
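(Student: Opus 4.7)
The plan is to mimic the proof of Theorem~\ref{A}, but with the prime decomposition supplied by Corollary~\ref{mean-prime} (which handles minimal-sphere boundary) in place of Theorem~\ref{C}, and with the boundary-version Remark~\ref{mean-mixed} in place of Theorem~\ref{mixed-existence}.

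\medskip

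\textbf{Step 1: Prime decomposition.} Since $\partial M$ is a disjoint union of $2$-spheres each of which is minimal for $g$, Corollary~\ref{mean-prime} writes $M$ as an infinite connected sum of closed prime $3$-manifolds $\{\hat{M}_l\}_l$ and $3$-balls; the connecting graph is locally finite and each factor sits inside $M$ as a prime factor $\hat{M}_l$ with finitely many punctures. So it suffices to show that every such closed prime factor $\hat{M}_l$ is homeomorphic to a spherical $3$-manifold or to $\mathbb{S}^1\times\mathbb{S}^2$.

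\medskip

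\textbf{Step 2: Reduce to the irreducible case.} Fix a factor $\hat{M}_l$ and, within $M$, a compact submanifold $M_l$ with $\partial M_l\cong\mathbb{S}^2$ whose boundary filling is $\hat{M}_l$. If $\hat{M}_l$ is homeomorphic to $\mathbb{S}^3$, $\mathbb{S}^1\times\mathbb{S}^2$, or $\mathbb{R}P^3$, there is nothing to prove (the first two appear in the conclusion directly, and $\mathbb{R}P^3$ is spherical). In every other case $\hat{M}_l$ is irreducible and $\partial M_l$ is not contractible in $M$.

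\medskip

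\textbf{Step 3: Produce a mixed-boundary piece.} By the boundary version of Theorem~\ref{mixed-existence} promised in Remark~\ref{mean-mixed}, applied with the submanifold $M_l$, we obtain an open $3$-manifold $M'_l\subset M$ such that $M'_l$ is homeomorphic to $\hat{M}_l$ with finitely many punctures and the closure of $(M'_l, g|_{M'_l})$ is a compact (or non-compact) $3$-manifold with mixed boundary. Because $\hat{M}_l$ is irreducible, every embedded $2$-sphere in $M'_l$ bounds a punctured $3$-ball on one side, so $\pi_2(M'_l)$ is generated by the boundary spheres of $M'_l$.

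\medskip

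\textbf{Step 4: Apply the classification.} Since $(M'_l, g|_{M'_l})$ has mixed boundary, scalar curvature $\geq 1$, and $\pi_2$ generated by $\partial M'_l$, Theorem~\ref{classify} (plus the subsequent remark treating the non-compact case as $\mathbb{R}^3$ with disjoint balls removed) implies that the interior of $M'_l$ is homeomorphic to a spherical $3$-manifold with finitely many punctures. Consequently $\hat{M}_l$ is spherical, and the proof is complete. The main technical obstacle is verifying that Theorem~\ref{mixed-existence} really does extend as claimed in Remark~\ref{mean-mixed}: the minimal boundary components must remain of type $J'$ in the mixed-boundary decomposition and not interfere with the area-minimizing sequence in the isotopy class of $\partial M_l$. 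Once this is in hand, Steps~3 and~4 run exactly as in the closed-boundary proof of Theorem~\ref{A}.
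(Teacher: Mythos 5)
Your proposal is correct and follows exactly the route the paper takes: the paper's proof of Corollary~\ref{mean-sphere} is the one-line statement that one runs the proof of Theorem~\ref{A} verbatim, substituting Corollary~\ref{mean-prime} for Theorem~\ref{C} and Remark~\ref{mean-mixed} for Theorem~\ref{mixed-existence}. Your Steps 1--4 unpack that substitution faithfully, including the observation that the $3$-ball factors coming from the boundary spheres appear alongside the spherical and $\mathbb{S}^1\times\mathbb{S}^2$ factors.
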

The proof is the same as the proof of Theorem \ref{A} but use Corollary \ref{mean-prime} and Remark \ref{mean-mixed} instead of Theorem \ref{C} and Theorem \ref{mixed-existence}.

\section{Proof of Theorem \ref{B}}
\subsection{Geometric version of Loop Lemma}
\begin{proposition}\label{loop} Let $(M, g)$ be a $3$-manifold with scalar curvature $\kappa\geq 1$ and with mean convex boundary. Assume that the boundary of $M$ is a union of  closed surfaces, $\{S_j\}_j$. If $\pi_1(S_{j_0})\rightarrow \pi_1(M)$ is not injective for some $j_0$, there is an embedded stable minimal disc $(D, \p D)\subset (M, S_{j_0})$ whose boundary is not homotopically trivial  in $S_{j_0}$. 
\end{proposition}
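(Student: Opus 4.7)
The plan is to reduce the statement, via the classical (topological) Loop Theorem of Papakyriakopoulos, to a question about the existence of an area-minimizing disc with prescribed non-contractible boundary curve, and then appeal to the Meeks--Yau embedding theorem (Theorem \ref{disc}). The uniform positivity of the scalar curvature is used only to handle possible non-compactness of $M$, through the diameter bound for stable minimal surfaces in Proposition \ref{dist}.

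First I invoke the classical Loop Theorem: since the kernel of $\pi_1(S_j) \to \pi_1(M)$ is non-trivial, there exists an embedded disc $(D_0, \p D_0) \hookrightarrow (M, S_j)$ whose boundary $\gamma := \p D_0$ is a simple closed curve non-contractible in $S_j$. In particular $\gamma$ is null-homotopic in $M$. When $M$ is compact, Theorem \ref{disc} applied to the mean convex manifold $(M, g)$ and the curve $\gamma$ directly produces an embedded area-minimizing, hence stable minimal, disc $D$ with $\p D = \gamma$, which is the desired candidate.

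For general (possibly non-compact) $M$, I take a compact exhaustion $\{M_k\}$ with smooth boundaries such that $D_0 \subset \i~M_1$, and modify $g$ in a thin collar of $\p M_k \setminus \p M$ as recalled in Section 4.2 to obtain a metric $g_k$ on $M_k$ that agrees with $g$ outside this collar and makes $\p M_k$ mean convex for $g_k$. Since $\gamma$ remains null-homotopic in $M_k$, Theorem \ref{disc} yields an embedded area-minimizing disc $D_k \subset (M_k, g_k)$ with $\p D_k = \gamma$. To pass to a limit, I argue as in the proof of Lemma \ref{pre}: fix $R > 2\pi/\sqrt{3}$ so that for $k$ large the ball $B_g(\gamma, R) := \{x \in M : d_g(x, \gamma) \leq R\}$ is compactly contained in the region where $g_k = g$; if the component of $D_k$ containing $\gamma$ met the sphere $\{d_g(\cdot, \gamma) = R\}$, then its intersection with $B_g(\gamma, R)$ would be a stable minimal surface for the original metric $g$ (with scalar curvature $\kappa \geq 1$), violating the diameter estimate $d(x, \p D) \leq 2\pi/\sqrt{3}$ of Proposition \ref{dist}. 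Hence $D_k \subset B_g(\gamma, 2\pi/\sqrt{3})$ and $D_k$ is stable minimal for the original metric $g$. Standard compactness for stable minimal surfaces with uniformly bounded area and fixed boundary then extracts a subsequential embedded limit $D$, which is the required stable minimal disc with $\p D = \gamma$ non-contractible in $S_j$.

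The main obstacle is the non-compactness of $M$: Meeks--Yau is stated only for compact mean convex manifolds, and without compactness one must prevent a sequence of minimizing discs from drifting off to infinity. This is exactly where $\kappa \geq 1$ enters, through Proposition \ref{dist}, which confines every stable minimal disc bounded by $\gamma$ to a fixed $(2\pi/\sqrt{3})$-tube around $\gamma$, making the exhaustion-and-limit argument close up.
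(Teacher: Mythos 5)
Your overall strategy matches the paper's: find a compact piece of $M$ where Meeks--Yau (Theorem~\ref{disc}) can be applied after a conformal deformation that makes the artificial boundary mean convex, then use the stable-minimal-surface diameter estimate coming from $\kappa\geq 1$ to confine the resulting disc inside the region where the deformed metric agrees with $g$. Both you and the paper implicitly use the classical Loop Theorem to produce a simple closed curve in $\ker(\pi_1(S_j)\to\pi_1(M))$. The difference is that the paper works with a single well-chosen compact set $K$ with $d(S_j,\p^*K)>4\pi$, while you set up a full exhaustion and a limiting argument.

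There is, however, a genuine gap in your confinement step. You apply Proposition~\ref{dist} to the truncated surface $D_k\cap B_g(\gamma,R)$ and assert that reaching the sphere $\{d_g(\cdot,\gamma)=R\}$ ``violates the diameter estimate $d(x,\p D)\leq 2\pi/\sqrt{3}$.'' But the boundary relevant to Proposition~\ref{dist} for this truncated surface is its \emph{entire} boundary, which includes the curves on $\{d_g(\cdot,\gamma)=R\}$; a point near the sphere is then close to $\p(D_k\cap B_g(\gamma,R))$, so the one-sided estimate is not violated and gives no contradiction on its own. The correct tool is the two-ended Corollary~\ref{dist1} applied to the component of $D_k\cap B_g(\gamma,R)$ containing $\gamma$: if this component reached the sphere it would be a connected stable minimal surface (for $g$, since $g_k=g$ on $B_g(\gamma,R)$) meeting both $\gamma\subset S_j$ and $\p B_g(\gamma,R)$, forcing $R = d(\gamma,\p B_g(\gamma,R))\leq 4\pi/\sqrt{3}$. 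Hence you must take $R > 4\pi/\sqrt{3}$, not merely $R>2\pi/\sqrt{3}$ — compare the threshold $d(S_j,\p^*K')>4\pi$ used in the paper. Once this is fixed the limiting argument at the end becomes superfluous: for a single sufficiently large $k$ the disc $D_k$ already lies in the region where $g_k=g$, is stable minimal for $g$, and has boundary the non-contractible curve $\gamma\subset S_j$, so it is the required disc. This is exactly what the paper does, with a single compact set rather than an exhaustion-plus-compactness argument.
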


\begin{proof} Let $\sigma\subset S_{j_0}$  be a simple circle, which is in the kernel of $\pi_1(S_{j_0})\rightarrow \pi_1 (M)$. There is a constant $r_0>4\pi$ such that $\sigma$ is homotopically trivial in $B(S_{j_0}, r_0)$. 
 We consider
the set $\tilde{J}=\{j ~|~ S_j\cap B(S_{j_0}, r_0)\neq \emptyset\}$ and define that 
$$K:=B(S_{j_0}, r_0)\cup_{j\in \tilde{J}} B(S_j, \epsilon_j),$$
where $B(S_j, \epsilon_j)$ is a tubular neighborhood of $S_j$ and $\{B(S_j, \epsilon_j)\}_{j}$ are disjoint. 
Then, we have that 

(1) $\sigma$ is homotopically trivial in $K$; 

(2) $\p^*K=\p K\cap \i M$ is a disjoint union of some closed surfaces; 

(3) $d(S_{j_0}, \p^* K)\geq r_0>4\pi$. 

\noindent We notice that the set $\p^* K$ and $\p M$ are disjoint.

\vspace{1mm}

\noindent \textbf{Step 1}: Find a metric $g'$ on $K$ so that $(K, g')$ has mean convex boundary. 

\vspace{1mm}

Choose $\epsilon>0$ sufficiently small satisfying that 
\begin{itemize}[leftmargin=15pt]
\item $B(\p^*K, \epsilon)$ and $\p M$ are disjoint; 
\item $d(\p^{*}K', S_{j_0})>4\pi$,
\end{itemize} where $K':=K\setminus B(\p^*K, \epsilon)$ and $\p^*K'=\p K'\cap \i M$. 

We use Lemma \ref{mean-convex-deform} to find  a metric $g'$ on $K$ so that 
(1) $g'=g$ on $K'$;
(2) the mean curvature of $\p^* K$ is positive for $g'$. 

Since $(M, g)$ has mean convex boundary, the mean curvature of $\p M\cap K$ is still non-negative for $g'$. Therefore, $(K, g')$ has mean convex boundary. 

\vspace{1mm}

We use Theorem \ref{disc} to find an embedded disc $D\subset K$ with boundary $\sigma$. It is stable minimal for $g'$.
\vspace{1mm}

\noindent \textbf{Step 2}: Show that $D$ is stable minimal for $g$. 

It is sufficient to show that $D$ is a subset of $K'$. 

Suppose that $D$ is not contained in $K'$. Namely, $D\cap \p^* K'\neq \emptyset$.  Consider the component $D'$ of $D\cap K'$ connecting $S_{j}$ and $\p^*K'$. Since $g'=g$ on $K'$, $D'$ is stable minimal for $g$. Corollary \ref{dist1} gives that $d(S_{j_0}, \p^* K')\leq 4\pi$.  However, $d(\p^*K', S_{j_0})>4\pi$, a contradiction. 

We can conclude that $D$ is in $K'$. It is stable minimal for $g$. 
\end{proof}
\begin{remark}\label{bdy-top}
Let $M$ and $D$ be as in Proposition \ref{loop}. By Definition \ref{bdy-con-sum},  $M$ is homeomorphic to a boundary connected sum of the components of $M\setminus D$ (with itself). 
\end{remark}

We  inductively use Proposition \ref{loop} to get a $3$-manifold with boundary. For describing the boundary surfaces, we introduce a family of surfaces in $3$-manifolds. 

\begin{definition} Let $X$ be a $3$-manifold and $\Sigma\subset X$ be an immersed surface. The surface $\Sigma$ is called \emph{incompressible} if the induced map $\pi_1(\Sigma)\rightarrow \pi_1(M)$ is injective. 
\end{definition}

\begin{corollary}\label{cut-disc}Let $(M, g)$ and $\{S_j\}_{j\in J}$ be as in Proposition \ref{loop}.  For each $j$, there are finitely many embedded disks  $\{D^l_j\}^{L_j}_{l=1}$ so that 
\begin{itemize}[leftmargin=15pt]
\item each $(D^l_j, \partial D^l_j)\subset (M, S_j)$ is stable minimal; 
\item for each component $M_s$ of $M\setminus \amalg^{L_j}_{l=1} D^l_j$, each closed surface in $\p M_s$ coming from $S_j$ is incompressible in $M_s$.
\end{itemize}
\end{corollary}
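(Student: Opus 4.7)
The plan is to construct the discs one at a time by iteratively applying Proposition~\ref{loop}, with a complexity argument forcing termination in finitely many steps. Fix $j$. For a finite disjoint union $\mathcal{F}$ of closed orientable surfaces, set
\[ c(\mathcal{F}) = \sum_{\Sigma \in \mathcal{F},\, g(\Sigma)\ge 1} \bigl(2g(\Sigma)-2\bigr), \]
a non-negative integer to which spheres contribute $0$.

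Starting from $M^{(0)} = M$ and $\mathcal{F}_0 = \{S_j\}$, I would proceed inductively. At stage $k$, assume I have produced disjoint stable minimal discs $D^1_j,\dots,D^k_j \subset M$, each with boundary on $S_j$, and let $M^{(k)} = M\setminus\bigsqcup_{l=1}^k D^l_j$ with $\mathcal{F}_k\subset\partial M^{(k)}$ the collection of closed surfaces coming from $S_j$ after the iterated compressions. If every $\Sigma\in\mathcal{F}_k$ is $\pi_1$-injective in its containing component of $M^{(k)}$, I stop; otherwise, I pick a component $M^{(k)}_s$ containing a compressible $\Sigma\in\mathcal{F}_k$ and apply Proposition~\ref{loop} to obtain a stable minimal disc $D^{k+1}_j$ whose boundary is essential on $\Sigma$. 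Because $\partial D^{k+1}_j$ is essential, the compression it induces either reduces the genus of $\Sigma$ by one (non-separating case) or splits $\Sigma$ into two surfaces of genera $g_1,g_2\ge 1$ summing to $g(\Sigma)$ (separating case; essentiality excludes $g_i=0$, since otherwise $\partial D^{k+1}_j$ would bound a disc on $\Sigma$). In both cases $c(\mathcal{F}_{k+1}) = c(\mathcal{F}_k) - 2$, so the iteration terminates after at most $g(S_j)-1$ steps.

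The main obstacle is verifying that Proposition~\ref{loop} still applies to each cut piece $M^{(k)}_s$, which has only a piecewise smooth boundary with codimension-two corners along $\bigsqcup_l \partial D^l_j$. The scalar curvature bound $\kappa\ge 1$ is preserved since the metric on the interior is unchanged. Both faces meeting at a corner, namely the original $\partial M$ and the minimal disc $D^l_j$, are mean convex, so a small corner-smoothing in a tubular neighborhood of each $\partial D^l_j$, analogous to the metric deformation of Section~4.2, produces a smoothly mean convex boundary; the same distance-$4\pi$ trick used in the proof of Proposition~\ref{loop} then ensures that the newly constructed stable minimal disc avoids the smoothing region and is therefore minimal for the original metric $g$. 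A further technicality is realizing $D^{k+1}_j$ as a disc in $M$ disjoint from the previous $D^l_j$, which can be arranged by an isotopy pushing $\partial D^{k+1}_j$ off the disc-capped portions of $\Sigma$ back onto the original $S_j$.
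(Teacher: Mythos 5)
Your overall strategy---iterate Proposition~\ref{loop} and use a genus-type complexity to force termination---matches the paper, which simply says ``we inductively use Proposition~\ref{loop}'' without spelling out the induction, so the extra detail is welcome. But there are two concrete gaps.

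First, the complexity $c(\mathcal{F}) = \sum_{g(\Sigma)\ge 1}(2g(\Sigma)-2)$ does \emph{not} strictly decrease in the genus-one case: compressing a compressible torus along a (necessarily non-separating) essential curve produces a sphere, and both the torus and the sphere contribute $0$ to $c$. So the assertion $c(\mathcal{F}_{k+1}) = c(\mathcal{F}_k) - 2$ fails there, and the stated bound ``at most $g(S_j)-1$ steps'' is not established. The fix is to use a strictly convex invariant, e.g.\ $\sum_{g(\Sigma)\ge 1}(2g(\Sigma)-1)$, which drops by $2$ for a non-separating compression with $g\ge 2$, by $1$ for a non-separating compression with $g=1$, and by $1$ for a separating compression (since $g_1,g_2\ge 1$ give $(2g_1-1)+(2g_2-1)=2g-2<2g-1$); this is nonnegative and yields termination in at most $2g(S_j)-1$ steps. (Alternatively $\sum g(\Sigma)^2$ works.)

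Second, the final ``isotopy pushing $\partial D^{k+1}_j$ off the disc-capped portions of $\Sigma$ back onto the original $S_j$'' would destroy the minimality of $D^{k+1}_j$, and the corollary requires stable minimal discs. This step is also unnecessary: in the proof of Proposition~\ref{loop} one chooses a simple closed curve $\sigma$ in the kernel of $\pi_1(\Sigma)\to\pi_1(M^{(k)}_s)$, and since the capping discs are simply connected, every such homotopy class has a simple representative in $\Sigma\cap S_j$. Choosing $\sigma\subset S_j$ from the outset, the Plateau solution $D^{k+1}_j$ then has $\partial D^{k+1}_j=\sigma\subset S_j$, and by the maximum principle its interior lies in $\text{Int}\,M^{(k)}_s$, hence it is automatically disjoint from the earlier $D^l_j$. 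The corner-smoothing issue you raise is a genuine technicality that the paper glosses over, and your outline for handling it is the right idea.
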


\begin{corollary}\label{cut-disc1} Let $(M, g)$ and $\{S_j\}_{j\in J}$ be as in Proposition \ref{loop}. Then there are a family of stable minimal discs $\{D^l_j\}_{j\in J, 1\leq l\leq L_j}$ so that 
\begin{itemize}[leftmargin=15pt]
\item[(a)] each component of $M\setminus \amalg_{j\in J}(\amalg_{l}D^l_j)$ is a $3$-manifold with mean convex boundary; 
\item[(b)] the boundary of a component $M_s$ of $M\setminus \amalg_{j\in J}(\amalg_{l}D^l_j)$ is a union of some closed surfaces; 
\item[(c)] each component of the boundary of $M_s$ is incompressible in $M_s$. 
\end{itemize}
\end{corollary}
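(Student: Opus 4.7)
The plan is to iterate Corollary~\ref{cut-disc} over the boundary components $\{S_j\}_{j\in J}$. Enumerate $J=\{j_1,j_2,\dots\}$ and let $M^{(0)}:=M$, $\mathcal{D}^{(0)}:=\emptyset$. Inductively, suppose at stage $k-1$ we have produced a locally finite family $\mathcal{D}^{(k-1)}$ of stable minimal discs in $M$ such that every component $N$ of $M^{(k-1)}:=M\setminus\bigcup_{D\in\mathcal{D}^{(k-1)}}D$ is a $3$-manifold with mean convex boundary, whose boundary is a disjoint union of closed surfaces, and every closed boundary surface of $N$ descending from $S_{j_1},\dots,S_{j_{k-1}}$ is incompressible in $N$. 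At stage $k$, the surface $S_{j_k}$ is connected and disjoint from every previous cut (since those cuts have boundaries on the previously processed $S_{j_i}$'s), so it lies in a unique component $N$ of $M^{(k-1)}$. Inside $N$ the hypotheses of Corollary~\ref{cut-disc} hold, yielding finitely many stable minimal discs $\{D^l_{j_k}\}_l$ with $\p D^l_{j_k}\subset S_{j_k}$; adjoin them to form $\mathcal{D}^{(k)}$. Stability of a minimal disc is a local second-variational condition on its normal bundle, so stability in $N$ is equivalent to stability in $M$.

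The key input that closes the induction is that incompressibility persists under further cutting by disjoint discs: if $\Sigma\subset\p N$ is incompressible in $N$ and $D\subset N$ is an embedded disc with $D\cap\Sigma=\emptyset$, then any compression disc for $\Sigma$ in the component $N'\ni\Sigma$ of $N\setminus D$ is also a compression disc for $\Sigma$ in $N$, and hence bounds in $\Sigma$. Thus all boundary surfaces declared incompressible at earlier stages remain incompressible, while Corollary~\ref{cut-disc} itself supplies incompressibility of the $S_{j_k}$-descendants at stage $k$. Mean convexity of $\p N$ is preserved because each newly added disc is minimal, and the surgery description of cutting shows that slicing along a disc with boundary on a closed surface replaces that surface by a disjoint union of closed surfaces (the original cut along $\p D$ and capped by two copies of $D$), preserving conclusion (b).

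Setting $\mathcal{D}:=\bigcup_k\mathcal{D}^{(k)}$ and noting that each family $\{D^l_j\}_l$ is concentrated in a bounded neighborhood of $S_j$ while $\{S_j\}_{j\in J}$ is locally finite in $M$, we obtain a locally finite total family whose complement in $M$ realizes the desired decomposition. The main obstacle I expect is a delicate technical point rather than a conceptual one: the new boundary pieces $D^l_j$ meet the surviving pieces of $S_j$ along $\p D^l_j$ at corners, so the ``mean convex boundary'' of conclusion~(a) must be read either in the piecewise-smooth sense already in use throughout Section~4, or regularized by a small conformal boundary-smoothing as in Section~4.2 that preserves the scalar curvature lower bound up to a negligible change in a neighborhood of the corners---after which the iterative construction produces a genuinely smooth mean convex decomposition with all boundary components incompressible.
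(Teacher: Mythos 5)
Your overall strategy — iterate Corollary~\ref{cut-disc} over the boundary components and argue that the resulting total family of discs is locally finite so that the complement is an honest decomposition — is the same as the paper's. Your careful treatment of the shrinking ambient manifold $M^{(k-1)}$ and the persistence of incompressibility under further cutting is a welcome expansion of a step the paper leaves terse.

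The weak point is the justification of local finiteness. You write that ``each family $\{D^l_j\}_l$ is concentrated in a bounded neighborhood of $S_j$,'' and conclude local finiteness of the total family from local finiteness of $\{S_j\}_{j\in J}$. That implication requires a bound that is \emph{uniform in $j$}: if the radius $r_j$ of the neighborhood were allowed to grow with $j$, a single compact $K\subset M$ could meet $B(S_j,r_j)$ for infinitely many $j$ even though it meets only finitely many $S_j$. Nothing in the statements of Proposition~\ref{loop} or Corollary~\ref{cut-disc} supplies such a uniform bound: the compact set $K$ chosen in the proof of Proposition~\ref{loop} depends on where $\sigma$ null-homotopes in $M$ and has no a priori diameter control. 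The uniform bound is exactly where the hypothesis $\kappa\geq 1$ re-enters: since each $D^l_j$ is stable minimal with $\p D^l_j\subset S_j$, Proposition~\ref{dist} gives $d(x,\p D^l_j)\leq 2\pi/\sqrt{3}$ for every $x\in D^l_j$, so $D^l_j\subset B(S_j, 2\pi/\sqrt{3})$ with a radius independent of $j$, and the counting argument then closes. This is the single non-trivial geometric input of the proof and should be stated explicitly rather than asserted. Your flagging of the corner-smoothing issue along $\p D^l_j$ in conclusion~(a) is reasonable and matches the piecewise-smooth reading the paper uses implicitly.
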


\begin{proof}We inductively apply Corollary \ref{cut-disc} to each $S_j$ and get finitely many  disjoint stable minimal discs $\{D_j^l\}^{L_j}_{l=1}$. The discs $\{D_j^l\}_{j\in J, 1\leq l\leq L_j}$ are also disjoint. 

\vspace{1mm}

It remains to show that the discs, $\{D^l_j\}_{j\in J, 1\leq l\leq L_j}$ are locally finite.

\vspace{1mm}

For any compact set $K\subset M$, consider   a compact set $B(K, 4\pi):=\{x~|~d(x, K)\leq 4\pi\}\subset M$. Since the family $\{S_j\}_{j\in J}$ is  locally finite, there are (at most) finitely many non-empty sets in $\{B(K, 4\pi)\cap S_j\}_{j\in J}$. 

Each $D^{l}_j$ is stable minimal  for $g$ and the scalar curvature $\kappa \geq 1$.  Proposition \ref{dist} shows that {if} $D^{l}_j\cap K\neq\emptyset$, we have that 
\begin{itemize}
\item  $d(K, \p D^{l}_j)\leq \frac{2\pi}{\sqrt{3}}$. 

\item  $d(K, S_j)\leq 2\pi$. Namely, $B(K, 4\pi)\cap S_j\neq \emptyset$. \end{itemize} 
Therefore, there are finitely many non-empty sets in $\{K\cap (\amalg^{L_j}_{l=1}D^l_j)\}_j$ (i.e. $\{D^l_j\}_{j\in J, 1\leq l\leq L_j}$ are locally finite). If not, there are infinitely many non-empty sets in $\{B(K, 4\pi)\cap S_j\}_{j\in J}$, a contradiction. 

\vspace{2mm}

The local finiteness of $\{D^l_j\}_{j\in J, 1\leq l\leq L_j}$ shows that each component of $M\setminus \amalg_j(\amalg_{l}D^l_j)$ is a $3$-manifold. Its boundary comes from minimal discs and $\{S_j\}_{j\in J}$. Thus, each component is a $3$-manifold with mean convex boundary. 

We could use Corollary \ref{cut-disc} to get that each component also satisfies (b) and (c). \end{proof}

\subsection{Incompressible surfaces and Positive scalar curvature} In this part, we talk about incompressible surfaces in a $3$-manifold with positive scalar curvature. 

\begin{lemma}\label{doubling} Let $(X, g)$ be a (non-compact) $3$-manifold with mean convex boundary and  scalar curvature $\kappa\geq 1$. If the boundary $\p X$ is a disjoint union of closed surfaces, $\{S_j\}_j$,  then the doubling of $X$ carries a complete metric of positive scalar curvature. 
\end{lemma}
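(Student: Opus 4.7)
The plan is to double $X$ along $\p X$, obtaining $DX := X \cup_{\p X} X$, equip it with the natural mirror-symmetric $C^0$ metric $\bar g$, and then smooth $\bar g$ in a thin bicollar of $\p X \subset DX$ to produce a smooth complete metric of positive scalar curvature. The mean convexity hypothesis $H \geq 0$ enters precisely at the step of controlling the sign of the singular contribution to the scalar curvature along $\p X$; the construction is the standard doubling-and-smoothing argument used in various forms by Miao, by B\"ar--Hanke \cite{Bar-Hanke}, and by Carlotto--Li \cite{C-L}, as cited in the introduction.

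First I would use Fermi coordinates $(t,x) \in [0,\epsilon)\times \p X$ along $\p X$ inside $X$, writing $g = dt^2 + h_t(x)$, so that the inward mean curvature of $\p X$ is $H = -\tfrac12\,\mathrm{tr}_{h_0}\!\bigl(\p_t h_t|_{t=0}\bigr) \geq 0$. Reflecting across $\{t=0\}$ produces the piecewise-smooth metric $\bar g = dt^2 + h_{|t|}$ on the bicollar $(-\epsilon,\epsilon)\times \p X \subset DX$; it is smooth off $\{t=0\}$, and the $h_0$-trace of the jump $\p_t \bar g|_{t=0^+} - \p_t \bar g|_{t=0^-}$ equals $4H \geq 0$, which is the source of the non-negative distributional singular part of $\kappa_{\bar g}$ along the seam.

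Next I would replace $\bar g$ on the bicollar by $g_\delta := dt^2 + h_{\phi_\delta(t)}$, where $\phi_\delta:(-\epsilon,\epsilon)\to [0,\epsilon)$ is a smooth even function equal to $|t|$ outside $(-\delta,\delta)$ and strictly convex on $(-\delta,\delta)$, with $\phi_\delta(0)$ a small positive number. A chain-rule computation of the scalar curvature in Fermi coordinates gives, schematically,
\begin{equation*}
\kappa_{g_\delta}(t,x) \;=\; \bigl(\phi_\delta'(t)\bigr)^2 \kappa_g\bigl(\phi_\delta(t),x\bigr) \;+\; \bigl(1-(\phi_\delta'(t))^2\bigr) R_{h_{\phi_\delta(t)}}(x) \;+\; 2\,\phi_\delta''(t)\,H_{\phi_\delta(t)}(x),
\end{equation*}
where $R_{h_s}$ is the intrinsic scalar curvature of the slice $(\p X, h_s)$ and $H_s$ the mean curvature of $\{t=s\}$ inside $(X,g)$. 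Outside $(-\delta,\delta)$ this reduces to $\kappa_g \geq 1$. Inside $(-\delta,\delta)$ the last term is non-negative by convexity of $\phi_\delta$ together with $H_s \geq 0$ (which holds by continuity of $H_s$ near $s=0$, after shrinking $\epsilon$ if needed); the middle error is of order $O(\delta\,\sup|R_{h_s}|)$ and is dominated once $\delta$ is small enough by the positive reservoir from $(\phi_\delta')^2 \kappa_g$ and the peak of $2\phi_\delta'' H$. Gluing back to $g$ outside the bicollar produces a smooth metric on $DX$ of positive scalar curvature, which is complete because the modification is localized in a collar of finite width.

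The main obstacle is that when $\p X$ has infinitely many components or unbounded geometry at infinity, a single pair $(\delta,\epsilon)$ may fail to work globally: the $C^2$-norms of $h_t$ and of $H_s$ on the collar could blow up. To deal with this I would let $\delta$ and $\epsilon$ depend smoothly on $x \in \p X$, shrinking them locally wherever the collar geometry degenerates, and likewise localize the profile $\phi_\delta$; the pointwise estimate above still applies and yields strict positivity of $\kappa_{g_\delta}$ everywhere on $DX$, although $\inf \kappa_{g_\delta}$ may tend to $0$ at infinity. This is acceptable because the lemma only asserts positive, not uniformly positive, scalar curvature on the doubling.
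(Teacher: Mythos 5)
Your argument is correct and is essentially the same approach the paper relies on: the paper simply cites Almeida's Theorem 1.1 (the compact-case doubling result) and observes that the construction is local, hence carries over to the non-compact setting, whereas you reconstruct the underlying reflection-and-convex-smoothing argument in detail and explicitly handle the non-compact subtlety by letting the collar width $\epsilon$ and smoothing parameter $\delta$ vary along $\p X$. Your schematic Fermi-coordinate scalar curvature expansion has the right structure, and the key mechanism --- strict convexity of the smoothing profile paired with $H\geq 0$ dominating the $O(\delta)$ slice-curvature errors --- is exactly what drives the cited proof.
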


The proof for the compact case is the same as [Theorem 1.1, Page 75] in \cite{Almeida}. Since the proof of  [Theorem 1.1, Page 75] in \cite{Almeida} only depends on the local computations,  it remains valid for the non-compact case. 

We use Lemma \ref{doubling} to get the following result. 

\begin{theorem}\label{bdy-sphere} Let $(X, g)$ be a $3$-manifold with scalar curvature $\kappa\geq 1$ and with mean convex boundary. If each component of $\p X$ is a closed  incompressible surface in $X$,  then each component of $\p X$ is homeomorphic to $\mathbb{S}^2$. 
\end{theorem}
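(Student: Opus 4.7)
The plan is to reduce to the closed-manifold setting via doubling, and then invoke Theorem \ref{A}. First, apply Lemma \ref{doubling} to produce a complete metric $\tilde g$ on the double $DX:=X\cup_{\partial X}X$ of uniformly positive scalar curvature. Almeida's construction modifies $g$ only in a collar of $\partial X$, preserves the lower bound $\kappa\ge 1$ outside the collar, and produces a definite positive lower bound inside the collar (the local data driving the construction depend only on fixed universal constants, so a uniform lower bound holds even when $\partial X$ has infinitely many components). Since $X$ is orientable, so is $DX$.

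Second, I would verify that every boundary component $S_j$ is a closed two-sided incompressible surface in $DX$. Two-sidedness is automatic because $S_j$ separates the two copies of $X$ inside $DX$. For incompressibility, suppose a loop $\gamma\subset S_j$ bounds an embedded disc $D\subset DX$. Put $D\pitchfork S_j$, so $D\cap S_j$ is a finite disjoint union of simple closed curves. An innermost-circle argument produces a sub-disc $D_0\subset D$ contained in a single copy of $X$ with $\partial D_0\subset S_j$; incompressibility of $S_j$ in $X$ caps $\partial D_0$ by a disc in $S_j$, and after swapping and pushing off we strictly decrease $|D\cap S_j|$. Inductively, $\gamma$ eventually bounds a disc in one copy of $X$, and a final use of incompressibility in $X$ shows $\gamma$ is null-homotopic in $S_j$.

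Third, I would apply Theorem \ref{A} to $(DX,\tilde g)$: $DX$ is homeomorphic to an infinite connected sum of spherical $3$-manifolds and copies of $\mathbb{S}^2\times\mathbb{S}^1$. Assume for contradiction that some $S_j$ has genus $g\ge 1$. Being compact, $S_j$ lies in a finite subpiece $N\subset DX$ obtained from a finite sub-tree of the connected-sum graph; writing $DX=N\cup_{\amalg\mathbb{S}^2}(DX\setminus\mathrm{int}\,N)$ and applying Van Kampen, $\pi_1(N)\hookrightarrow\pi_1(DX)$ as a free factor (since the gluing spheres are simply connected), so $S_j$ remains incompressible in $N$. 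A standard Kneser--Haken innermost-disc reduction then isotopes $S_j$ off the sphere system realizing the prime decomposition of $N$, placing $S_j$ incompressibly inside a single prime summand. But $\pi_1(\mathbb{S}^3/\Gamma)$ is finite and $\pi_1(\mathbb{S}^2\times\mathbb{S}^1)\cong\mathbb{Z}$, whereas $\pi_1(\Sigma_g)$ for $g\ge 1$ is infinite and non-cyclic; neither admits an injection of $\pi_1(\Sigma_g)$, contradicting incompressibility. Hence every $S_j$ is a $2$-sphere.

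The main analytic obstacle is confirming uniform positivity of $\tilde\kappa$ on $DX$ (Lemma \ref{doubling} is only stated with ``positive'' scalar curvature, so one must read Almeida's construction carefully, especially when $\partial X$ has infinitely many components, to extract a uniform bound). Once this is in place, the remaining ingredients --- incompressibility transfer across the doubling, the Kneser--Haken reduction to a single prime factor, and the elementary $\pi_1$ obstruction --- are standard tools of $3$-manifold topology.
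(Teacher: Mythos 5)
Your proposal and the paper both begin by doubling $X$ along its boundary and verifying that each $S_j$ remains incompressible in $DX$, but they then diverge, and the divergence is where a real gap appears. The paper, after forming the double with merely \emph{positive} scalar curvature (which is all Lemma~\ref{doubling} supplies), invokes the classical Schoen--Yau and Gromov--Lawson theorem (Theorem~4 of \cite{SY}, Theorem~8.4 of \cite{GL}): a $3$-manifold carrying a complete metric of positive scalar curvature contains no closed incompressible surface of genus $\geq 1$. That result needs only positivity, not a uniform lower bound, so the paper's argument closes immediately and cleanly.

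Your proposal instead tries to run Theorem~\ref{A} on $(DX,\tilde g)$ and then carry out a Kneser--Haken innermost-disc reduction plus a $\pi_1$ obstruction inside a single prime summand. That downstream topology is fine: the free-factor observation via Van Kampen, the isotopy of $S_j$ off the sphere system, and the fact that $\pi_1(\Sigma_g)$ ($g\geq 1$) injects into neither a finite group nor $\mathbb{Z}$ are all correct. The problem is upstream. Theorem~\ref{A} requires \emph{uniformly} positive scalar curvature, and Lemma~\ref{doubling} as stated only yields positive scalar curvature on the double. You flag this yourself as ``the main analytic obstacle,'' but then assert it can be resolved because ``the local data driving the construction depend only on fixed universal constants.'' That is not obviously so: the hypothesis is only mean \emph{convexity} ($H\geq 0$), the boundary may have infinitely many components, and there is no a priori control on the second fundamental form or on how small $H$ may get across components; Almeida's bending could then force the new scalar curvature arbitrarily close to zero on some collars. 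Establishing a uniform lower bound in this generality would be a genuine strengthening of Lemma~\ref{doubling} that is neither proved in the paper nor, as far as I can tell, immediate from \cite{Almeida}, \cite{Bar-Hanke}, or \cite{C-L} (note that Theorem~\ref{metric-deformation} in the paper deforms one compact boundary component at a time, and in the proof of Theorem~\ref{B} the resulting constants are absorbed by rescaling each piece $M_s$ \emph{separately}, not globally). So as written your argument has a gap precisely at the step needed to apply Theorem~\ref{A}. The fix is to do what the paper does: drop Theorem~\ref{A} here and use Schoen--Yau/Gromov--Lawson, for which positivity alone suffices.
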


\begin{proof} 
Suppose that there is an incompressible surface $S_j\subset \p X$ with the genus $g(S_j)\geq 1$.  
Since the map $\pi_1(\p X)\rightarrow \pi_1(X)$ is injective, we use the group theory's version of Van-Kampen's theorem (see  Theorem 11.60  of  \cite{Rot} on Page 386) to find that $S_j$ is also an incompressible surface in the doubling of $X$.

By Lemma \ref{doubling}, the doubling of $X$ has a complete metric with positive scalar curvature. 
However, it is well-known from Theorem 4  of \cite{SY} on Page 225 and Theorem 8.4 of \cite{GL} on Page 164 that if a $3$-manifold has an incompressible  closed surface whose genus is greater than zero, it has no complete metric with positive scalar curvature.  It leads to a contradiction. 

We can conclude that each component of $\p X$ is a $2$-sphere. 
\end{proof}

\subsection{Complete the proof of Theorem \ref{B}} We begin with a metric deformation  and then  use it to complete the proof of Theorem \ref{B}.

\begin{theorem} \label{metric-deformation}Let $(X, g)$ be a $3$-manifold with scalar curvature $\kappa\geq 1$ and with mean convex boundary.  If $\p X$ has a closed surface $S$, then for  any $\epsilon>0$, there is a metric $g'$ on $X$ so that 
\begin{itemize}[leftmargin=15pt]
\item the scalar curvature of $\kappa_{g'}\geq \frac{1}{2}$; 
\item the boundary $\p X$ is minimal for $g'$; 
\item $g'$ is equal to $g$ on $X\setminus B(S, \epsilon)$. \end{itemize}where $B(S, \epsilon)$ is the tubular neighborhood of $S$ with radius $\epsilon$. 
\end{theorem}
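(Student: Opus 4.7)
The plan is to construct $g'$ as a localized conformal deformation of $g$ in a tubular neighborhood of $S$, following the scheme of B\"ar--Hanke \cite{Bar-Hanke} and Carlotto--Li \cite{C-L}. The mean convexity $H_S \geq 0$ fixes the correct sign for a Neumann-type boundary condition on the conformal factor, and the strict gap between $R_g \geq 1$ and the target $R_{g'} \geq 1/2$ provides the necessary slack.

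After possibly shrinking $\epsilon$, I would introduce Fermi coordinates $(t, y) \in [0, \epsilon) \times S$ on $B(S, \epsilon)$ so that $g = dt^2 + g_t(y)$, with $S = \{t=0\}$, outward unit normal $\nu = -\p_t$, and $H_S(y) \geq 0$. I would then seek $g' = u^4 g$ for a smooth positive function $u : X \to (0, \infty)$ with $u \equiv 1$ outside $B(S, \epsilon)$. The conformal transformation laws in dimension three,
\begin{equation*}
R_{g'} = u^{-5}\bigl(-8 \Delta_g u + R_g\, u\bigr), \qquad H_{g'} = u^{-2}\bigl(H_g + 4\, \nu(u)/u\bigr),
\end{equation*}
reduce the minimality condition $H_{g'} \equiv 0$ on $S$ to the Robin-type equation $\p_t u|_{t=0} = \tfrac{1}{4} H_S(y)\, u(0, y)$. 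A concrete ansatz is $u(t, y) = 1 + H_S(y)\, \chi(t)$, where $\chi \colon [0, \infty) \to \mathbb{R}_{\geq 0}$ is a smooth cutoff with $\chi(0) = 0$, $\chi'(0) = 1/4$, and $\chi \equiv 0$ on $[\delta, \infty)$ for some $\delta \leq \epsilon$. This $u$ is positive, coincides with $1$ outside $B(S, \delta) \subset B(S, \epsilon)$, and makes $S$ minimal for $g' = u^4 g$.

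The remaining, and principal, step is to check the pointwise bound $R_{g'} \geq 1/2$, which reduces to the differential inequality $-8 \Delta_g u + R_g\, u \geq \tfrac{1}{2} u^5$ on $B(S, \delta)$. The leading contribution to $\Delta_g u$ is the term $H_S(y)\, \chi''(t)$, while the remaining pieces (involving tangential derivatives of $H_S$ and $t$-derivatives of $g_t$) are of lower order and absorbable by shrinking $\delta$.

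The main obstacle will be the sign of $\chi''$: since $\chi$ must rise from $0$ at $t=0$ and return to $0$ before $t = \delta$, $\chi''$ is positive on some subinterval of $[0, \delta]$, and there the term $-8\Delta_g u$ contributes negatively to $R_{g'}$. I expect this to be overcome, as in \cite{Bar-Hanke, C-L}, by a careful choice of the profile of $\chi$ combined with the insertion of a small parameter $\lambda \in (0, 1]$ (replacing $H_S$ by $\lambda H_S$ in the ansatz and correspondingly setting $\chi'(0) = 1/(4\lambda)$), so that the positive part of $\lambda H_S(y) \chi''(t)$ is dominated by the slack $R_g - 1/2 \geq 1/2$. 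Once this is arranged, the quasilinear error terms are $O(\lambda)$ and the pointwise scalar curvature bound follows, completing the construction of $g'$.
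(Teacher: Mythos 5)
The paper offers no proof of this statement, only a citation to Theorem 27 and Remark 31 of \cite{Bar-Hanke} and to \cite{C-L}; so there is no in-text argument to compare against, and your proposal must be judged on its own. The framework you set up is correct: the conformal ansatz $g' = u^4 g$, the resulting Robin boundary condition $\partial_t u|_{t=0} = \tfrac14 H_S(y)\, u$, and the reduction of $R_{g'}\geq\tfrac12$ to the pointwise inequality $-8\Delta_g u + R_g u \geq \tfrac12 u^5$ are all right, and you correctly identify that the obstruction is the positive part of $\chi''$.

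The device you propose to overcome it, however, does nothing: with $u = 1 + \lambda H_S(y)\chi(t)$ and $\chi'(0)=1/(4\lambda)$, setting $\phi := 4\lambda\chi$ (so that $\phi'(0)=1$) gives $u = 1 + \tfrac14 H_S(y)\phi(t)$, with every $\lambda$ cancelled. Nothing in the resulting computation is $O(\lambda)$, and the rescaling is purely cosmetic. What actually has to be established is that a smooth bump $\phi$ supported in $[0,\delta)$, with $\phi(0)=0$, $\phi'(0)=1$, $\phi\geq 0$, can be chosen so that $\sup\phi''$ is as small as one wishes. This is true, but the point is the \emph{asymmetry} of the profile: take $\phi'$ to decay very steeply near $t=0$, say $\phi'(t)\approx e^{-Kt}$ with $K$ large. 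Then $\phi$ quickly settles onto a plateau of height $\approx 1/K$, and bringing $\phi$ smoothly back to $0$ by $t=\delta$ only requires $\sup\phi''\lesssim 1/(K\delta^2)$, while the very large negative $\phi''$ near $t=0$ only increases $R_{g'}$. The lower-order pieces of $\Delta_g u$ (involving $\phi'$ and $\phi$) are then harmless: near $t=0$ they are dominated by the large positive $-2H_S\phi''$, and on the tail they are $O(1/K)$ and absorbed by the slack $R_g-\tfrac12\geq\tfrac12$ once $K$ is large. The approach can therefore be made to work, but the step you deferred --- the precise design of $\phi$ --- is the entire content of the proof, and the operative mechanism is the steep initial decay of $\phi'$, not the parameter $\lambda$.
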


The metric $g'$ comes from a local metric deformation on $B(S, \epsilon)$. It was proved by Theorem 3.7 in \cite{Bar-Hanke} and also by  \cite{C-L}.

\begin{corollary}Let $(X, g)$ be a $3$-manifold with scalar curvature $\kappa\geq 1$ and with mean convex boundary. If $\p X$ is a union of closed surfaces $\{S_j\}_j$, there is a metric $g'$ on $X$ satisfying that 
\begin{itemize}
\item the scalar curvature of $g'$ is not less than $\frac{1}{2}$;
\item the boundary of $\p X$ is minimal for $g'$;
\end{itemize}
\end{corollary}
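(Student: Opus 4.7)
The plan is to apply Theorem \ref{metric-deformation} component-by-component on pairwise disjoint tubular neighborhoods of the boundary components. Since $\partial X$ is a properly embedded submanifold of the $3$-manifold $X$, the connected component decomposition $\partial X = \amalg_j S_j$ is locally finite in $X$: any compact set of $X$ meets only finitely many $S_j$. This allows us to choose a sequence $\epsilon_j > 0$ (shrinking sufficiently fast near infinity if there are infinitely many components) so that the tubular neighborhoods $\{B(S_j, \epsilon_j)\}_j$ are pairwise disjoint.

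For each $j$, I would then invoke Theorem \ref{metric-deformation} applied to the component $S_j$ with parameter $\epsilon_j$: this yields a metric $g_j$ on $X$ whose scalar curvature is at least $\tfrac{1}{2}$, such that $S_j$ is minimal for $g_j$, and such that $g_j = g$ on $X \setminus B(S_j, \epsilon_j)$. Because the supports of the deformations $g_j - g$ are pairwise disjoint, we may splice them into a single metric $g'$ on $X$ by declaring
\begin{equation*}
g' = \begin{cases} g_j & \text{on } B(S_j, \epsilon_j), \\ g & \text{on } X \setminus \bigcup_j B(S_j, \epsilon_j). \end{cases}
\end{equation*}
This defines a smooth metric on $X$ since on the overlap (the boundary shells of the tubes) both definitions agree with $g$.

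It remains to verify the two required properties. The scalar curvature of $g'$ at a point $x$ in a tube $B(S_j, \epsilon_j)$ agrees with that of $g_j$, hence is at least $\tfrac{1}{2}$; outside all tubes the scalar curvature of $g'$ equals that of $g$, which is at least $1 \geq \tfrac{1}{2}$. Similarly, near each $S_j$ the metric $g'$ coincides with $g_j$, so $S_j$ is minimal for $g'$. Hence the entire boundary $\partial X = \amalg_j S_j$ is minimal for $g'$. The only mildly technical step is the initial choice of the $\epsilon_j$'s to ensure disjoint tubes, which is standard for a locally finite family of properly embedded closed submanifolds; no further obstacle appears because Theorem \ref{metric-deformation} is genuinely local and reduces the global problem to independent local modifications.
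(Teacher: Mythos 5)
Your proposal is correct and follows essentially the same route as the paper: choose pairwise disjoint tubular neighborhoods $B(S_j,\epsilon_j)$ of the boundary components, apply Theorem \ref{metric-deformation} independently in each, and splice the resulting local deformations into a single metric $g'$. You spell out the splicing and the local-finiteness point more explicitly than the paper does, but the argument is the same.
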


\begin{remark}\label{scaling} After scaling the metric $g'$, we have a metric $g''$ so that (1) the scalar curvature of $g''$ is bounded below by one (2) $\p X$ is minimal for $g''$. \end{remark}

\begin{proof} For each $j$, there is a positive constant $\epsilon_j$ so that $\{B(S_j, \epsilon_j)\}_{j}$ are disjoint. We apply Theorem \ref{metric-deformation} to each $S_j$ and deform the metrics on each $B(S_j, \epsilon_j)$. Thus, we could obtain a metric $g'$ such that (1) the scalar curvature of $g'$ is not less than $\frac{1}{2}$ (2) $\p X$ is minimal for $g'$. 
\end{proof}

We now complete the proof of   Theorem \ref{B}. 

\begin{proof} (Proof of Theorem \ref{B})
Let $(M, g)$ be an orientable $3$-manifold with mean convex boundary and with scalar curvature $\kappa\geq 1$. The boundary $\p M$ consists of some closed surfaces, $\{S_j\}_{j\in J}$.

We use Corollary \ref{cut-disc1} to get a family of stable minimal discs $\{D^{l}_j\}_{j\in J, 1\leq l\leq L_j}$ so that 
\begin{itemize}[leftmargin=15pt]
\item They cut $M$ into some connected $3$-manifolds, $\{M_s\}$. 
\item Each component of $\p M_s$ is incompressible in $M_s$ and a closed surface. 
\item $(M_s, g|_{M_s})$ has mean convex boundary and the scalar curvature $\kappa\geq 1$.  
\end{itemize}From Theorem \ref{bdy-sphere}, each component of $\p M_s$ is a $2$-sphere.

We use Remark \ref{scaling} to find  a metric on $M_s$ so that (1)  the scalar curvature is bounded below by one;  (2) $\p X_s$ is minimal for the new metric. By Corollary \ref{mean-sphere}, $M_s$ is an infinite connected sum of spherical $3$-manifolds, $3$-balls and some copies of $\mathbb{S}^1\times\mathbb{S}^2$. 

By Definition \ref{bdy-con-sum}, $M$ is homeomorphic to a boundary connected sum of $\{M_s\}_s$  (with themselves) (see Remark \ref{bdy-top}). We could  do the operation of the boundary connected sum on each $3$-ball (at most) finitely many times and obtain some handlebodies (see Remark \ref{handle-bdy-connected}). 

Therefore, $M$ is an infinite connected sum of spherical $3$-manifolds, handlebodies and some copies of $\mathbb{S}^1\times \mathbb{S}^2$. 
\end{proof} 

\section*{Appendix A: Proofs of Lemma \ref{cut} and Lemma \ref{curve}}
We will complete the proofs of Lemma \ref{cut} and Lemma \ref{curve}. 

\vspace{2mm}

\noindent \textbf{Lemma \ref{cut}} \emph{Let $M$ be an open $3$-manifold which is homotopically equivalent  to $\s^1$. Then a closed surface $\S\subset M$ cuts $M$ into a non-compact part and a compact part.}
\begin{proof} Suppose that $M\setminus \S$ is connected. Namely, there is a closed curve $\sigma$ which intersects $\S$ transversally at one point. The intersection number of $\S$ and $\sigma$ is $\pm 1$. 

However, since $H_2(M)$ is trivial ,  there is a compact connected $3$-manifold $M_0\subset M$ containing $\S$ and $\sigma$ satisfying  $[\S]=0$ in  $H_2(M_0)$. The Poincar\'e duality shows that  
\begin{equation*}
\begin{split}
 H_{1}(M_0, \p M_0)\times H_{2}(M_0)&\longrightarrow \mathbb{Z}\\
\sigma\quad\quad\times\quad\S\quad&\mapsto  (\sigma, \S).\\
\end{split}
\end{equation*}Therefore, the intersection number $(\sigma,\S)$ is equal to zero, which contradicts  the last paragraph.  We can conclude that $\S$ cuts $M$ into two components, $M_1$ and $M_2$.

\vspace{2mm}

We note that $\S$ is the boundary of $\overline{M}_i$ for $i=1,2$. If $M_i$ is non-compact,  $[\S]\neq 0$  in $H_2(\overline{M}_i)$. 

\vspace{2mm}

It remains  to show that one of components of $M\setminus \S$ is compact. 

\vspace{2mm}

The  Mayer-Vietoris sequence shows \[H_3(M)\rightarrow H_2(\S)\rightarrow H_2(\overline{M_1})\oplus H_2(\overline{M_2})\rightarrow H_2(M). \]
Since  $H_3(M)\cong H_2(M)\cong \{1\}$,
 we have that  $H_2(\overline{M}_1)\oplus H_2(\overline{M}_2)\cong\mathbb{Z}$. 
One of $H_2(\overline{M}_1)$ and $H_2(\overline{M}_2)$  is isomorphic to $\mathbb{Z}$ and generated by $[\S]$.

We may assume that $H_2(\overline{M}_1)$ is isomorphic to $\mathbb{Z}$ and generated by $[\S]$. 
Thus, $H_2(\overline{M}_2)$ is trivial. From the above remark, $\overline{M}_2$ is compact. 
Since $M$ is non-compact, $\overline{M}_1$ is non-compact.  
We finish the proof of the lemma. 
\end{proof}

\noindent\textbf{Lemma \ref{curve}} \emph{Let $M$ be an open $3$-manifold which is homotopically equivalent to $\mathbb{S}^1$ and $\gamma$ a circle generating $\pi_1(M)$. Assume that an embedded closed surface $\S$ bounds a compact set $M_{\S}\subset M$. If $\gamma$ is a subset of $M_{
\S}$, then there is a circle $\sigma\subset \S$ so that it is contractible in $M$ but non-contractible in $M\setminus \gamma$. }

\vspace{2mm}

Before proving Lemma \ref{curve}, we introduce two types of surgeries on a compact set. Let $M$ and $\gamma$ be assumed as in Lemma \ref{curve}. Assume that an embedded surface $\S\subset M$ bounds a compact set $M_{\S}\subset M$ and $\gamma\subset M_{\S}$. 

Consider an embedded disc $(D, \p D)\subset (M\setminus \gamma, \S)$ with $D\cap \S=\p D$, where $\p D$ is non-contractible in $\S$. 

\vspace{1mm}

\noindent \textbf{Type I}: If $D$ is a subset of $M_{\S}$,  we consider an open tubular neighborhood $N(D, \epsilon) \subset M_{\S}\setminus \gamma$ of $D$. We then have two cases: 

Case (1):  If $N(D, \epsilon )$ cuts $M_\S$ into two components, then we choose the compact set $M_{\S'}$ as the component of $M_{\S}\setminus N(\S, \epsilon)$ containing $\gamma$, where $\S'=\p M_{\S'}$ is a closed surface. 

Case (2): If $M_{\S}\setminus N(D, \epsilon)$ is connected, then we choose the compact set $M_{\S'}$ as $M_{\S}\setminus N(D, \epsilon)$, where $\S'=\p M_{\S'}$ is a closed surface. 

\vspace{2mm}

\noindent \textbf{Type II}: If $D$ is a subset of $\overline{M\setminus M_\S}$, then we consider an open tubular neighborhood $N(D, \epsilon)$ of $D$ in $\overline{M\setminus M_\S}$. We also have two cases. 

Case (1): If $\S\setminus \p D$ is connected, we choose $M_{\S'}$ as $M_\S\cup N(D, \epsilon)$, where $\S'=\p M_{\S'}$ is a closed surface. 

Case (2): If $\S\setminus \p D$ is disconnected, then the boundary of $M_\S\cup N(D, \epsilon)$ has two components. Choose the component $\S'$ satisfying that 
\begin{itemize}
\item it bounds a compact set $M_{\S'}\subset M$;
\item $M_\S$ is a subset of $M_{\S'}$. 
\end{itemize}Its existence is ensured by the proof of Corollary \ref{exhaustion}. 

\vspace{1mm}

\noindent \textbf{Remark}:  Let $\S'$ be a closed surface constructed from Type I or Type II surgery. We have that 

(1) $\gamma$ is a subset of $M_{\S'}$;

(2) In any case, $\S'$ is obtained by cutting $\S$ along $\p D$ and then gluing one component with some discs. We find that  $\S'$ is a union of $\S'\cap \S$ and some disjoint discs;

(3) As described in (2), since $\p D$ is not homotopically trivial in $\S$, we have  that the genus of  $ \S'$ is less than the genus of $\S$. 

\vspace{2mm}

We observe  that $g(\S) >0$. Otherwise, $\S$ is a $2$-sphere. Since $M$ is irreducible, then it bounds a $3$-ball $M_\S\subset M$. Thus, $\gamma$ is contractible in $M_\S\subset M$, which is a contradiction.  

\begin{proof} (Proof of Lemma \ref{curve}) We argue by induction on the genus $g(\S)$.

\vspace{2mm}

When $g(\S)=1$, then $\pi_1(\S)\rightarrow \pi_1(M\setminus \gamma)$ is injective. If not, there is an embedded disc $D\subset M\setminus \gamma$ with $D\cap \S=\p D$. The boundary $\p D\subset \S$ is non-contractible in $\S$. We use Type I or Type II surgery to get a closed surface $\S'$ satisfying that 
\begin{itemize}
\item it bounds a compact set $M_{\S'}\subset M$ and $\gamma\subset M_{\S'}$; 
\item $g(\S')<g(\S)$.
\end{itemize}Thus, $g(\S')$ is equal to zero (i.e. $\S'$ is a $2$-sphere and $M_{\S'}$ is a $3$-ball). We get that $\gamma$ is contractible in $M_{\S'}\subset M$, which leads to a contradiction. 

\vspace{2mm}

Since $\pi_1(M)\cong \mathbb{Z}$, then the map $\pi_1(\S)\rightarrow \pi_1(M)$ is not injective. We choose a circle $\sigma\subset \S$ so that $[\sigma]$ is a non-trivial element in $\text{Ker}(\pi_1(\S)\rightarrow \pi_1(M))$. Namely, $\sigma$ is contractible in $M$. 

Because $\pi_1(\S)\rightarrow \pi_1(M\setminus \gamma)$ is injective,   $\sigma$ is non-contractible in $M\setminus \gamma$. It is the required candidate in our assertion.

\vspace{1mm}

Suppose that it is true when $g(\S)\leq k$. 

\vspace{1mm}

When $g(\S)=k+1$, we can conclude that the map $\pi_1(\S)\rightarrow \pi_1(M\setminus \gamma)$ is injective. The reason is as follows: 

If not, there is an embedded disc $D\subset M\setminus \gamma$ with $D\cap \S=\p D$. The boundary $\p D\subset \S$ is non-contractible in $\S$. We use Type I or Type II surgery to get a closed surface $\S'$ satisfying that 
\begin{itemize}
\item[(a)] it bounds a compact set $M_{\S'}\subset M$ and $\gamma\subset M_{\S'}$; 
\item[(b)] $\S'$ is a union of $\S'\cap \S$ and some disjoint discs; 
\item[(c)] $g(\S')<g(\S)$.
\end{itemize}From (c), we apply the inductive hypothesis to $\S'$ and get a circle $\sigma_1 \subset \S'$. From (b), we may assume that $\sigma_1$ is a subset of $\Sigma\cap \S'\subset \S$. It is the required circle in our assertion. 

\vspace{2mm}

Since $\pi_1(M)\cong \mathbb{Z}$, then the map $\pi_1(\S)\rightarrow \pi_1(M)$ is not injective. We choose a circle $\sigma\subset \S$ so that $[\sigma]$ is a non-trivial element in $\text{Ker}(\pi_1(\S)\rightarrow \pi_1(M))$. Namely, $\sigma$ is contractible in $M$. 

Because $\pi_1(\S)\rightarrow \pi_1(M\setminus \gamma)$ is injective,  $\sigma$ is non-contractible in $M\setminus \gamma$. This is the required candidate in our assertion. It finishes the proof of Lemma \ref{curve}
\end{proof}

\bibliographystyle{alpha}
\bibliography{Uniform-positive}

\end{document}